\newtheorem{theorem}{Theorem}
\theoremstyle{plain}
\newtheorem{corollary}{Corollary}
\newtheorem{definition}{Definition}
\newtheorem{example}{Example}
\newtheorem{lemma}{Lemma}
\newtheorem{proposition}{Proposition}
\newtheorem{remark}{Remark}
\numberwithin{equation}{section}
\begin{document}
\begin{center}
\vspace*{1.3cm}

\textbf{FUNCTIONS WITH UNIFORM LEVEL SETS}

\bigskip

by

\bigskip

PETRA WEIDNER\footnote{HAWK Hildesheim/Holzminden/G\"ottingen University of Applied Sciences and Arts, Faculty of Natural Sciences and Technology,
D-37085 G\"ottingen, Germany, {petra.weidner@hawk-hhg.de}.}

\bigskip
\bigskip
Research Report \\ 
Version 3 from August 09, 2016\\
Extended Version of Version 1 from January 30, 2016

\end{center}

\bigskip
\bigskip

\noindent{\small {\textbf{Abstract:}}
Functions with uniform level sets can represent orders, preference relations or other binary relations and thus turn out to be a tool for scalarization that can be used, e.g., in multicriteria optimization, decision theory, mathematical finance, production theory and operator theory. Sets which are not necessarily convex can be separated by functions with uniform level sets. This has a deep impact on functional analysis, where many proofs require separation theorems. 
This report focuses on properties of real-valued and extended-real-valued functions with uniform level sets which are defined on a  topological vector space. This includes the extension of aspects and results given in an earlier paper by Gerth (now Tammer) and Weidner. The functions may be, e.g., continuous, convex, strictly quasiconcave or sublinear. They can coincide with a Minkowski functional or with an order unit norm on a subset of the space.
As a side result, we show that the core of a closed pointed convex cone is its interior in an appropriate norm topology.
}

\bigskip

\noindent{\small {\textbf{Keywords:}} 
uniform level sets; scalarization; separation theorems; multicriteria optimization; production theory; mathematical finance 
}

\bigskip

\noindent{\small {\textbf{Mathematics Subject Classification (2010): }
46A99, 46N10, 90C29, 90B30, 91B99}}

\section{Introduction}

In this paper, we investigate functions $\varphi _{A,k}$ with uniform level sets based on the formula
\begin{equation}
\varphi_{A,k} (y):= \inf \{t\in
{\mathbb{R}} \mid y\in tk + A\}, \label{first-funcak0}
\end{equation}
where $A$ is a subset of a topological vector space $Y$ and $k\in Y\setminus\{0\}$.

This formula was introduced by Tammer (formerly Gerstewitz and Gerth) for convex sets $A$ under more restrictive assumptions in the context of vector optimization \cite{ger85}.
Basic properties of $\varphi _{A,k}$ have been proved in \cite{GerWei90} and \cite{Wei90}, later followed by \cite{GopRiaTamZal:03},
\cite{TamZal10} and \cite{DT}. For detailed bibliographical notes, see Section \ref{sec-phiAk_biogr}. There we will also point out that researchers of different fields of mathematics and economic theory have applied functions of type $\varphi _{A,k}$ since these functions are appropriate for separating nonconvex sets and for the scalarization of relations like partial orders or preference relations.

Depending on the choice of $A$ and $k$, $\varphi _{A,k}$ can be real-valued or also attain the value $-\infty$.
We will use the symbolic function value $\nu $ (instead of the value $+\infty$ in convex analysis) when extending a functional to the entire space or at points where a function is not feasible otherwise. Thus our approach differs from the classical one in convex analysis in these cases since the functions we are studying are of interest in minimization problems as well as in maximization problems. Consequently, we consider functions which can attain values in $\overline{\mathbb{R}}_{\nu }:=\overline{\mathbb{R}}\cup\{\nu \}$, where $\overline{\mathbb{R}}:=\mathbb{R}\cup\{-\infty, +\infty\}$. $\varphi _{A,k}$ never attains the value $+\infty$ since we define $\operatorname*{sup}\emptyset = \operatorname*{inf}\emptyset =\nu $. Details of functions with values in $\overline{\mathbb{R}}_{\nu }$ are explained in \cite{Wei15}. 
For the application of this approach to $\varphi _{A,k}$ we have to keep in mind the following terms and definitions:
\begin{enumerate}
\item $\operatorname*{inf}\emptyset =\nu\not\in\overline{\mathbb{R}}$
\item $\operatorname*{dom}\varphi _{A,k}=\{y\in Y\mid \varphi _{A,k}(y)\in\mathbb{R}\cup (-\infty )\}$ is the (effective) domain of $\varphi _{A,k}$
\item $\varphi _{A,k}$ is proper if $\operatorname*{dom}\varphi _{A,k}\not= \emptyset$ and $\varphi _{A,k}(y)\in\mathbb{R}
\;\;\forall y\in \operatorname*{dom}\varphi _{A,k}$
\item $\varphi _{A,k}$ is finite-valued if $\varphi _{A,k}(y)\in\mathbb{R} \;\;\forall y\in Y$ 
\end{enumerate}

We will start our investigations in Section \ref{sec-basicdef} with functions for which the sublevel sets are just linear shifts of a set $A$ into direction $k$ and $-k$, respectively. These functions turn out to be of type $\varphi_{A,k}$ with $k\in -0^+A\setminus\{0\}$, where $0^+A$ denotes the recession cone of $A$ defined below.
In the sections to follow, $\varphi_{A,k}$ is studied for closed sets $A$ in topological vector spaces with $k\in -0^+A\setminus\{0\}$. In this case, $\varphi_{A,k}$ is a lower semicontinuous translation-invariant function with uniform sublevel sets $A+tk$, $t\in\mathbb{R}$. $\varphi_{A,k}$ is finite-valued if $k\in -\operatorname*{core}0^+A$. These and further basic properties of functions of type $\varphi _{A,k}$ are proved in Section \ref{sec-basicprop}. Interdependencies between the functions $\varphi _{A,k}$, $\varphi _{A,\lambda k}$, $\varphi _{A+ck,k}$ and $\varphi _{y^0+A,k}$, which are essential for applications, are studied. 

We will always try to find conditions which are sufficient and necessary for certain properties of $\varphi _{A,k}$, e.g. for convexity, sublinearity, strict quasiconvexity, strict quasiconcavity, Lipschitz continuity.
Though we work in a topological vector space, $A$ does not have to contain interior points or algebraic interior points.
Assumptions are often formulated using the recession cone of $A$. We will show that these assumptions are equivalent to usual assumptions in production theory like the free-disposal assumption and the strong free-disposal assumption.

Theorem \ref{t-sep-all} and Theorem \ref{t-sep-H2} point out the way in which $\varphi _{A,k}$ separates sets.
Several statements connect $\varphi _{A,k}$ with the sublinear function $\varphi _{0^+A,k}$ (Proposition \ref{p-varphi_rec}, 
Proposition \ref{k-core}, Proposition \ref{prop-finite}, Theorem \ref{th-Lip-varphi}).

Section \ref{sec-Monot} deals with the monotonicity of $\varphi _{A,k}$ in the framework of scalarizing binary relations.

Just in the case that the domain of $\varphi _{A,k}$ is open and $\varphi _{A,k}$ is continuous on its domain, the level sets of $\varphi _{A,k}$ are the uniform sets $\operatorname*{bd}A+tk$, $t\in\mathbb{R}$. Functions with this property are studied more in depth in Section \ref{sec-basicprop-cont}. Proposition \ref{-phi_Ak} will give us a tool for transfering results for the minimization of functions of type $\varphi _{A,k}$ to the maximization of functions of this type. This is  of interest if $\varphi _{A,k}$ is a strictly quasiconcave utility function. In a Banach space, Lipschitz continuity of $\varphi _{A,k}$ is characterized by the condition $k\in -\operatorname*{int}0^+A$.

Section \ref{sec-cx} focuses on convex functions $\varphi _{A,k}$ including statements for sublinear functionals. In Proposition \ref{p-subdiff}, a necessary condition for subgradients of $\varphi _{A,k}$ is given by the sublinear function 
$\varphi _{0^+A,k}$.

In Proposition \ref{p-Mink-uni} and Proposition \ref{p-ordint-varphi}, we show the relationship between $\varphi _{A,k}$ and the Minkowski functional of $A+k$ if $A$ is a convex cone, and the coincidence of values of certain norms with values of $\varphi _{A,k}$. These norms are just order unit norms if the space is a Riesz space. Moreover, Section \ref{s-Mink-lev} contains a characterization of points from the core of a cone as interior points in a norm topology.

Finally, Section \ref{sec-phiAk_biogr} contains bibliographical notes which refer to the statements of this report and underline the connection with scalarization in vector optimization.

From now on, $\mathbb{R}$ and $\mathbb{N}$ will denote the sets of real numbers and of non-negative integers, respectively.
We define $\mathbb{R}_{+}:=\{x\in\mathbb{R}\mid x\geq 0\}$, $\mathbb{R}_{>}:=\{x\in\mathbb{R}\mid x > 0\}$,
$\mathbb{R}_{+}^2:=\{(x_1,x_2)\in\mathbb{R}^2\mid x_1\geq 0, x_2\geq 0\}$ and $\mathbb{N}_>:=\mathbb{N}\setminus\{0\}$.
Linear spaces will always be assumed to be real vector spaces. 
A set $C$ in a linear space $Y$ is a cone if $\lambda c\in C \;\forall\lambda\in\mathbb{R}_{+}, c\in C$. The cone $C$ is called non-trivial if $C\not=\{0\}$ and $C\not= Y$ hold. For a subset $A$ of some linear space $Y$, 
$\operatorname*{core}A$ will denote the algebraic interior of $A$ and $0^+A:=\{u\in Y  \mid  a+tu\in A\ \forall a\in A \; \forall t\in \mathbb{R}_{+}\}$ the recession cone of $A$.
Given two sets $A$, $B$ and some vector $k$ in $Y$, we will use the notation $A\; B:=A \cdot B:=\{a \cdot b\mid a \in A , \; b\in B\}$ and $A\; k:=A \cdot k:=A \cdot \{ k\}$.
In a topological space $Y$, $\operatorname*{cl}A$, $\operatorname*{int}A$ and $\operatorname*{bd}A$ denote the closure, the interior and the boundary, respectively, of a subset $A$.
For a functional $\varphi$ defined on some space $Y$ and attaining values in $\overline{\mathbb{R}}_{\nu }$ we will denote the epigraph of $\varphi$ by   
$\operatorname*{epi}\varphi$, the effective domain of $\varphi$ by $\operatorname*{dom}\varphi$ and the level sets of $\varphi$ w.r.t. some binary relation $\mathcal{R}$ given on $\overline{\mathbb{R}}_{\nu }$ by $\operatorname*{lev}_{\varphi,\mathcal{R}}(t):= \{y\in Y \mid \varphi (y) \mathcal{R} t\}$ with $t\in\mathbb{R}$.
Note that -- for simplicity -- we use the notion level set not only for the relation $=$, but e.g. also for relations $\leq$ and $>$. 

Beside the properties of functions defined in \cite{Wei15}, we will need the following ones:
\begin{definition}\label{d-mon}
Let $Y$ be a linear space, $B\subseteq Y$ and $\varphi: Y \to \overline{\mathbb{R}}_{\nu } $. \\
$\varphi$ is said to be
\begin{itemize}
\item[(a)]
$B$-monotone on $F\subseteq \operatorname*{dom}\varphi$
if $y^1,y^2 \in F$ and $y^{2}-y^{1}\in B$ imply $\varphi
(y^{1})\le \varphi (y^{2})$,
\item[(b)] strictly $B$-monotone on $F\subseteq \operatorname*{dom}\varphi$ 
if $y^1,y^2 \in F$ and $y^{2}-y^{1}\in B\setminus
\{0\}$ imply $\varphi (y^{1})<\varphi (y^{2})$,
\item[(c)] $B$-monotone or strictly $B$-monotone if it is $B$-monotone or strictly $B$-monotone, respectively, on $\operatorname*{dom}\varphi$,
\item[(d)] quasiconvex if $\operatorname*{dom}\varphi$ is convex and
\[\varphi (\lambda y^1 + (1-\lambda ) y^2) \le \operatorname*{max}(\varphi (y^1),\varphi (y^2))\]
for all $y^1, y^2 \in \operatorname*{dom}\varphi$ and $\lambda \in (0,1)$,
\item[(e)] strictly quasiconvex if $\operatorname*{dom}\varphi$ is convex and
\[\varphi (\lambda y^1 + (1-\lambda ) y^2) < \operatorname*{max}(\varphi (y^1),\varphi (y^2))\]
for all $y^1, y^2 \in \operatorname*{dom}\varphi$ with $y^1\not= y^2$ and $\lambda \in (0,1)$,
\item[(f)] strictly quasiconcave if $-\varphi$ is strictly quasiconvex. 
\end{itemize}
\end{definition}
\smallskip

\section{Definition of Functions with Uniform Level Sets}\label{sec-basicdef}

A functional $\varphi$ separates two sets $V$ and $W$ in a space $\,Y$ if there exists some value $t\in \mathbb{R}$ such that one of the sets is contained in $M:=\{y\in Y \mid \varphi (y) \le t \}$, the other one is contained in $\{y\in Y \mid \varphi (y) \ge t \}$ and $V\cup W\not\subseteq \{y\in Y \mid \varphi (y) = t \}$. 
Disjoint convex sets in a finite-dimensional vector space can be separated by some linear functional $\varphi$. In this case, $M=tk+A$ for some halfspace A and some $k\in Y$. Being interested in nonconvex sets, we use this idea and investigate functionals $\varphi $ which fulfill the condition
\begin{displaymath}
\varphi(y)\le t \iff y\in tk+A.
\end{displaymath}
Here, A is assumed to be some proper subset of Y.

\begin{proposition} \label{prop-vor-theo}
Consider a linear space Y and a function $\varphi:Y\rightarrow \overline{{\mathbb{R}}}_{\nu }$ for which there exist some proper subset $A$ of $\; Y$ and some $k\in Y\setminus\{0\}$
such that $\operatorname*{dom}\varphi =\mathbb{R}k+A$ and
\begin{equation}
\operatorname*{lev}\nolimits_{\varphi,\le}(t)=t k+A\quad \forall\, \ t \in {\mathbb{R}}. \label{K4}
\end{equation}
Then 
\begin{equation}\label{d-sep-func}
k\in -0^+A 
\end{equation}
and
\begin{equation}
\varphi (y)= \inf \{t\in
{\mathbb{R}} \mid y\in tk + A\} \;\;\forall y\in Y \label{vor-funcak0}
\end{equation}
hold. 
\end{proposition}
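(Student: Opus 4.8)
The plan is to treat the two assertions separately: I would derive \eqref{d-sep-func} from the monotonicity of the sublevel sets, and then read \eqref{vor-funcak0} off the defining identity \eqref{K4} value by value. For \eqref{d-sep-func}, I would start from the elementary fact that sublevel sets of any function are nested: whenever $t_1\le t_2$ one has $\operatorname*{lev}_{\varphi,\le}(t_1)\subseteq\operatorname*{lev}_{\varphi,\le}(t_2)$, since $\varphi(y)\le t_1$ forces $\varphi(y)\le t_2$. Specialising to $t_1=0$ and an arbitrary $t_2=s\ge 0$, and using \eqref{K4} (which at $t=0$ gives $\operatorname*{lev}_{\varphi,\le}(0)=A$), yields $A\subseteq sk+A$ for every $s\in\mathbb{R}_+$. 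Hence for each $a\in A$ and each $s\ge 0$ there is $a'\in A$ with $a=sk+a'$, i.e.\ $a-sk=a'\in A$. By the definition of the recession cone this says exactly that $-k\in 0^+A$, that is, $k\in-0^+A$ (with $k\ne 0$ already assumed), which is \eqref{d-sep-func}.

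For \eqref{vor-funcak0}, fix $y\in Y$ and rewrite the set over which the infimum is taken by means of \eqref{K4}:
\[
\{t\in\mathbb{R}\mid y\in tk+A\}=\{t\in\mathbb{R}\mid \varphi(y)\le t\}.
\]
I would then evaluate the infimum of the right-hand set according to the value $\varphi(y)\in\overline{\mathbb{R}}_\nu$. If $\varphi(y)\in\mathbb{R}$, the set is $[\varphi(y),+\infty)$ and its infimum is $\varphi(y)$; if $\varphi(y)=-\infty$, the set is all of $\mathbb{R}$ and its infimum is $-\infty=\varphi(y)$; and if $y\notin\operatorname*{dom}\varphi$, the set is empty, so by the convention $\inf\emptyset=\nu$ its infimum is $\nu=\varphi(y)$. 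In every case the infimum equals $\varphi(y)$, which is precisely \eqref{vor-funcak0}.

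The one delicate point — and the step I expect to require the most care — is the bookkeeping for the symbolic value. One must ensure that the points outside $\operatorname*{dom}\varphi$ carry the value $\nu$ rather than $+\infty$, so that the empty infimum reproduces $\varphi(y)$ there; this is exactly where the convention of \cite{Wei15} (that $\varphi$ takes the value $\nu$, not $+\infty$, where it is infeasible) together with the hypothesis $\operatorname*{dom}\varphi=\mathbb{R}k+A$ enters, since it is precisely for these $y$ that $\{t\in\mathbb{R}\mid y\in tk+A\}$ is empty. The extended-real cases and the recession-cone computation are otherwise entirely routine.
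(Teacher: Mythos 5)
Your proof is correct and follows essentially the same route as the paper's: your nested-sublevel-set argument specialised to the pair $(0,s)$ is exactly the paper's observation that $y\in A-tk$ with $t\in\mathbb{R}_+$ forces $\varphi(y)\le -t\le 0$ and hence $y\in A$, and your case-by-case evaluation of $\inf\{t\in\mathbb{R}\mid \varphi(y)\le t\}$ matches the paper's treatment of the cases $\varphi(y)=-\infty$ and $\varphi(y)=t\in\mathbb{R}$ (where the paper argues the minimality by contradiction instead of rewriting the feasible set, a purely cosmetic difference). The delicate point you flag is handled the same way there: the paper simply records that (\ref{K4}) together with $\operatorname*{dom}\varphi=\mathbb{R}k+A$ excludes the value $+\infty$, which, as you note, rests on the convention of \cite{Wei15} that infeasible points carry the value $\nu$, so that the empty infimum reproduces $\varphi(y)$ outside the domain.
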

\begin{proof}
$y\in A-tk$, $t\in\mathbb{R}_{+}$, and (\ref{K4}) result in $\varphi(y)\le -t \le 0$ and thus in $y \in A$. Hence (\ref{d-sep-func}) is satisfied.
(\ref{K4}) and $\operatorname*{dom}\varphi =\mathbb{R}k+A$ imply $\varphi (y)\not= +\infty\;\forall y\in Y$.
If $\varphi (y)=-\infty$, then $y\in tk+A\; \forall\, \ t \in {\mathbb{R}}$, thus $\inf \{t\in
{\mathbb{R}} \mid y\in tk + A\}=-\infty$. If $\varphi (y)=t\in {\mathbb{R}}$, then $y\in tk+A$. If (\ref{vor-funcak0}) would not be satisfied, then there would exist some 
$\lambda \in {\mathbb{R}}$ with $\lambda < t$ and 
$y\in \lambda k+A$. This would imply $\varphi (y)\le\lambda < t$, a contradiction.
\end{proof}

Let us note that $k\in -0^+A$ is equivalent to $A=A-\mathbb{R}_{+} k$ and results in $\operatorname{int} A=\operatorname{int} A-\mathbb{R}_{+} k$.

We will show that (\ref{d-sep-func}) and (\ref{vor-funcak0}) imply (\ref{K4}) if $Y$ is a topological vector space and $A$ is closed. This implication is not true for an arbitrary set $A$.
\begin{example}
Take $Y=\mathbb{R}^{2}$, $k:=(1,0)^T$ and
$A:=\{ (y_1,y_2)^T\in\mathbb{R}^{2}\mid y_1< 0\}$. (\ref{d-sep-func}) is satisfied. Define $\varphi:Y\rightarrow \overline{{\mathbb{R}}}_{\nu }$ by (\ref{vor-funcak0}).
Then $\varphi((0,0)^T)=0$ though $(0,0)^T\not\in A$. Thus (\ref{K4}) is not fulfilled.\\
Moreover, $A$ and k can not be used to define a functional $\varphi$ by condition (\ref{K4}) since, in this case, $\varphi((0,0)^T)=0$ because of $\varphi((0,0)^T)\le t
\;\forall t>0$, but $(0,0)^T\not\in A$.
\end{example}

\begin{definition}\label{d-funcak0}
Given a linear space Y, some proper subset $A$ of $\; Y$ and $k\in Y\setminus\{0\}$,
the function $\varphi _{A,k}:Y\rightarrow \overline{{\mathbb{R}}}_{\nu }$ is defined by
\begin{equation}
\varphi_{A,k} (y):= \inf \{t\in
{\mathbb{R}} \mid y\in tk + A\}. \label{funcak0}
\end{equation}
\end{definition}

One gets an immediate geometric interpretation of $\varphi_{A,k}$ since $tk + A$ is just the set $A$ shifted by $tk$.

Before we restrict our attention to closed sets $A$ in topological vector spaces, let us note that $\varphi_{A,k}$ can differ from $\varphi_{\operatorname*{cl}A,k}$ even on $\operatorname*{dom}\varphi _{A,k}$. In the first example, we will illustrate this for a finite-valued functional, in the second one for a non-trivial pointed convex cone $A$.
\begin{example}
Consider $Y=\mathbb{R}^{2}$, $k:=(1,0)^T$ and
$A:=\{ (y_1,y_2)^T\in\mathbb{R}^{2}\mid y_1\le 0\} \cup \{ (y_1,y_2)^T\in\mathbb{R}^{2}\mid y_1\le 1, y_2< 0\}$. $\varphi _{A,k}$ and $\varphi _{\operatorname*{cl}A,k}$ are finite-valued. $\varphi _{A,k}((0,0)^T)=0$, but $\varphi _{\operatorname*{cl}A,k}((0,0)^T)=-1$. 
\end{example}

\begin{example}
Consider $Y=\mathbb{R}^{2}$, $k:=(1,0)^T$ and
$A:=\{ (y_1,y_2)^T\in\mathbb{R}^{2}\mid y_2< 0\} \cup \{ (y_1,y_2)^T\in\mathbb{R}^{2}\mid y_1\le 0, y_2= 0\}$. $A$ is a non-trivial pointed convex cone.
$\varphi _{A,k}((0,0)^T)=0$, but $\varphi _{\operatorname*{cl}A,k}((0,0)^T)=-\infty$. 
\end{example}
\smallskip

\section{Basic Properties of Functions with Uniform Level Sets}\label{sec-basicprop}

We will now investigate basic properties of the functional.

\begin{theorem}
\label{t251}
Assume
\begin{description}
\item[$(H1_{A,k})$] $Y$ is a topological vector space, $A$ is a closed proper subset of $\;Y$ and
\begin{equation}
k\in -0^+A\setminus \{0\}. \label{r250}
\end{equation}
\end{description}

Then $\varphi _{A,k}$ is lower semicontinuous on $\operatorname*{dom}\varphi _{A,k}$,
\begin{eqnarray}
\operatorname*{dom}\varphi _{A,k} & = & {\mathbb{R}}k+A = {\mathbb{R}_{>}}k+A \not= \emptyset,\label{dom_Ak}\\
\operatorname*{lev}\nolimits_{\varphi_{A,k},\le}(t)& = & t k+A \quad \forall\, \ t \in {\mathbb{R}},\label{f-r252n}\\
tk+ \operatorname*{core}A & \subseteq & \operatorname*{lev}\nolimits_{\varphi_{A,k},<}(t)  \quad \forall\, t \in \mathbb{R},\label{core_in_less} \\
tk+ \operatorname*{int}A & \subseteq & \operatorname*{lev}\nolimits_{\varphi_{A,k},<}(t)  \quad \forall\, t \in \mathbb{R},\label{int_in_less} \\
\operatorname*{lev}\nolimits_{\varphi_{A,k},=}(t) & \subseteq & tk+ \operatorname*{bd}A  \quad \forall\, t \in\mathbb{R},\label{r0} 
\end{eqnarray} 
\begin{equation}
\operatorname*{epi}\varphi _{A,k} =\{(y,t)\in Y\times \mathbb{R} \mid y\in tk+A\}
\label{epi_Ak}
\end{equation}  
and
\begin{equation}
\varphi _{A,k} (y+t k)=\varphi _{A,k} (y)+t \quad \forall\, y\in Y,\, t \in
\mathbb{R}. 
\label{f-r255nn}
\end{equation}
Moreover:
\begin{itemize}
\item[(a)] The following conditions are equivalent to each other:
\begin{equation} \varphi _{A,k} \mbox{ is continuous on } \operatorname*{dom}\varphi _{A,k} \mbox{ and } \operatorname*{dom}\varphi _{A,k} \mbox{ is open},\end{equation} 
\begin{equation} A-\mathbb{R}_{>}\cdot k\subseteq \operatorname*{int} \;A, \label{vor-H2} \end{equation}   
\begin{equation} \operatorname*{bd} A-\mathbb{R}_{>}\cdot k\subseteq \operatorname*{int} \;A,\label{bd-H2} \end{equation} 
\begin{equation} \operatorname*{lev}\nolimits_{\varphi_{A,k},<}(t)=tk+ \operatorname*{int}A \quad \forall\, t \in \mathbb{R},\label{int_in_equal} \end{equation} 
\begin{equation} \operatorname*{lev}\nolimits_{\varphi_{A,k},=}(t) = tk+ \operatorname*{bd}A \quad \forall\, t \in \mathbb{R}.\label{r0_eq} \end{equation} 
Beside this, the following conditions are equivalent to each other:
\begin{equation} A-\mathbb{R}_{>}\cdot k\subseteq \operatorname*{core} \;A, \label{in_core} \end{equation} 
\begin{equation} \operatorname*{lev}\nolimits_{\varphi_{A,k},<}(t)=tk+ \operatorname*{core}A \quad \forall\, t \in \mathbb{R}.\label{core_in_equal} \end{equation} 
Condition (\ref{in_core}) implies $\operatorname*{dom}\varphi _{A,k}=\operatorname*{core}\operatorname*{dom}\varphi _{A,k}$.
\item[(b)] $\varphi _{A,k}(y)=-\infty \iff y+\mathbb{R}k\subseteq A$.
\item[(c)] $\varphi _{A,k}$ is finite-valued on $\operatorname*{dom}\varphi _{A,k}\setminus A$. 
\item[(d)] $\varphi _{A,k} $ is proper if and only if \\
$A$ does not contain
lines parallel to $k$, \textit{\textit{i.e.}}
\begin{equation}
\forall\, a\in A:\; a+\mathbb{R}k\not\subseteq A.\label{r255nx-inA}
\end{equation}
Condition (\ref{r255nx-inA}) is equivalent to
\begin{equation}
\forall\, y\in Y:\; y+\mathbb{R}k\not\subseteq A.\label{r255nx}
\end{equation}
\item[(e)] If $\varphi _{A,k} $ is proper, then 
\begin{equation}
A=\operatorname*{bd}A -\mathbb{R}_{+} k
\label{a_eq_bd}
\end{equation}
and
\begin{equation}
\operatorname*{dom}\varphi _{A,k}=\mathbb{R} k+\operatorname*{bd}A.
\label{dom_bd}
\end{equation}
\item[(f)] $\varphi _{A,k} $ is finite-valued if and only if 
$\varphi _{A,k} $ is proper and
\begin{equation}
Y=\mathbb{R} k+A. \label{r252nx}
\end{equation}
\item[(g)] $\varphi _{A,k} $ is quasiconvex $\iff$ $\varphi _{A,k} $ is convex $\iff$ $A$ is convex.
\item[(h)] $\varphi _{A,k} $ is positively homogeneous $\iff$ $A$ is a cone.\\
If $A$ is a cone and $k\in -\operatorname*{core} A$, then $\operatorname*{dom}\varphi _{A,k} =Y$ and $\operatorname*{lev}\nolimits_{\varphi_{A,k},<}(t)=tk+ \operatorname*{core}A \quad \forall\, t \in \mathbb{R}$.
\item[(i)] $\varphi _{A,k} $ is subadditive $\iff$ $A+A\subseteq A$.
\item[(j)] $\varphi _{A,k} $ is sublinear $\iff$ $A$ is a convex cone.
\end{itemize}
\end{theorem}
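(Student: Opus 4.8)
The plan is to treat sublinearity as the conjunction of positive homogeneity and subadditivity, so that parts (h) and (i) of the theorem carry almost all of the weight. Recalling that $\varphi_{A,k}$ is sublinear precisely when it is both positively homogeneous and subadditive, I would invoke (h) to translate positive homogeneity into the statement that $A$ is a cone, and (i) to translate subadditivity into $A+A\subseteq A$. This immediately reduces the desired equivalence to the purely set-theoretic claim
\[
A \text{ is a cone and } A+A\subseteq A \iff A \text{ is a convex cone.}
\]

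For that reduction the only genuine content is the elementary fact that, among cones, convexity and closure under addition coincide. First I would prove the forward implication: if $A$ is a convex cone and $a,b\in A$, then $\tfrac{1}{2}(a+b)\in A$ by convexity, whence $a+b=2\cdot\tfrac{1}{2}(a+b)\in A$ by the cone property, giving $A+A\subseteq A$. Conversely, assuming $A$ is a cone with $A+A\subseteq A$, for $a,b\in A$ and $\lambda\in(0,1)$ the cone property yields $\lambda a\in A$ and $(1-\lambda)b\in A$, and then $\lambda a+(1-\lambda)b\in A+A\subseteq A$; combined with the cone property this shows $A$ is convex. Hence a cone satisfies $A+A\subseteq A$ if and only if it is convex, which closes the chain of equivalences.

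The main point to verify carefully is the very first step, namely that in the $\overline{\mathbb{R}}_{\nu}$-framework the notion of sublinearity used here is exactly ``positively homogeneous and subadditive,'' so that (h) and (i) may be applied without any hidden finiteness or domain hypotheses. Once the definition is unwound in this way, no further analytic input about $A$ being closed or about $k\in -0^+A\setminus\{0\}$ is needed for (j): the closedness and recession-cone assumptions have already been absorbed into the proofs of (h) and (i), and the remaining argument is precisely the convex-cone characterization above. I would therefore expect (j) to be essentially a corollary of the earlier items, with the only subtlety being the bookkeeping around possible $-\infty$ values inherited from (h) and (i).
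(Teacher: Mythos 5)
Your proposal addresses only item (j), taking the earlier items of Theorem \ref{t251} as given; since the paper itself proves (j) as a corollary of earlier items, that structure is legitimate, and within this scope your argument is correct but takes a genuinely different route. The paper derives (j) in one line from (g) and (h), using the convention that a functional is sublinear if and only if it is convex and positively homogeneous; because ``convex cone'' is literally the conjunction of the two conditions characterized in (g) and (h), the paper needs no additional lemma. You instead combine (h) and (i), reading sublinearity as positive homogeneity plus subadditivity, which costs you the elementary lemma that a nonempty cone $A$ satisfies $A+A\subseteq A$ if and only if it is convex; your proof of that lemma (halving plus the cone property in one direction, scaling by $\lambda$ and $1-\lambda$ in the other) is sound, and the statement is not vacuous since $(H1_{A,k})$ forces $A\neq\emptyset$. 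You correctly isolate the one real risk, namely which definition of sublinearity the $\overline{\mathbb{R}}_{\nu}$ framework of \cite{Wei15} adopts: the paper works with ``convex and positively homogeneous,'' not your ``positively homogeneous and subadditive.'' But the two conventions are reconciled by exactly your set-theoretic lemma applied at the epigraph level: the paper proves (g), (h) and (i) via the characterizations that $\operatorname*{epi}\varphi _{A,k}$ is convex, is a cone, and satisfies $\operatorname*{epi}\varphi _{A,k}+\operatorname*{epi}\varphi _{A,k}\subseteq\operatorname*{epi}\varphi _{A,k}$, respectively, so ``cone and additively closed'' coincides with ``convex cone'' for the epigraph just as it does for $A$, and no hidden finiteness or domain hypotheses intervene (the possible $-\infty$ values are absorbed into the epigraph formulation, and $\varphi_{A,k}$ never attains $+\infty$). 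In summary: the paper's route is shorter and definitionally frictionless; yours makes the algebraic content explicit, matches the more standard textbook definition of sublinearity, and avoids any reliance on (g).
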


\begin{proof}
$\operatorname*{dom}\varphi _{A,k} ={\mathbb{R}}k+A= {\mathbb{R}_{>}}k+A$ and (\ref{f-r255nn}) follow immediately from the definition of $\varphi _{A,k}$.\\
Obviously, $\operatorname*{lev}_{\varphi_{A,k},\le}(t) \supseteq t k+A\;\; \forall\, \ t \in {\mathbb{R}}$.
We will now prove the inclusion $\subseteq$ of (\ref{f-r252n}).
If $\varphi_{A,k}(y)=\lambda <t$ for some $\lambda \in \mathbb{R} \cup \{-\infty\}$, then there exists some $\lambda_{1} \in (\lambda,t)$ with $y\in A+\lambda_{1} k=A+tk+(\lambda_{1} -t)k\subseteq A+tk$ because of (\ref{funcak0}) and (\ref{r250}) since $\lambda_{1}-t<0$.\\
For $\varphi_{A,k}(y)=t\in \mathbb{R}$, assume $y\notin tk+A$.
Then there exists some neighbourhood $U(y)\subset Y\setminus (tk+A)$ of y.
$\Rightarrow \exists \alpha\in \mathbb{R}:$ \quad $\alpha >0$ and $y-\lambda k\in U(y) \;\;\forall \lambda\in [0,\alpha )$. 
$\Rightarrow y-\lambda k\not\in tk+A \;\;\forall \lambda\in [0,\alpha )$.
$\Rightarrow y \not\in (t+\lambda ) k+A \;\;\forall \lambda\in [0,\alpha )$, a contradiction to (\ref{funcak0}).
Thus (\ref{f-r252n}) holds. This implies (\ref{epi_Ak}).\\
$\varphi_{A,k}$ is lower semicontinuous on $\operatorname*{dom}\varphi_{A,k}$ since the closedness of $A$ implies the closedness of all sets $\operatorname*{lev}_{\varphi_{A,k},\le}(t)$, $t \in {\mathbb{R}}$.\\
To prove (\ref{core_in_less}), consider some $y\in tk+ \operatorname*{core}A$, $t\in \mathbb{R}$.
$\Rightarrow \exists \lambda \in \mathbb{R}_{>}:\;\;\; y-tk+\lambda k\in A$. 
$\Rightarrow y\in (t-\lambda )k+A$.
$\Rightarrow \varphi _{A,k} (y)\le t-\lambda < t$ because of (\ref{f-r252n}). Thus (\ref{core_in_less}) holds.\\
(\ref{core_in_less}) implies (\ref{int_in_less}). \\
(\ref{r0}) follows immediately from (\ref{f-r252n}) and (\ref{int_in_less}).
\begin{itemize}
\item[(a)] Assume first that (\ref{vor-H2}) holds. Let $t \in
{\mathbb{R}}$ and $y\in Y$ be such that $\varphi_{A,k} (y)<t $.
Then there exists some $\lambda \in {\mathbb{R}}$, $\lambda < t $, such that $y\in
\lambda k+A$. It follows that $y\in \lambda k+A=t k+(A-(t
- \lambda )k)\subseteq t k+\mathop{\rm int}A$. This results, together with (\ref{int_in_less}), in 
(\ref{int_in_equal}).\\
Let us now assume that (\ref{int_in_equal}) is satisfied. Consider some $y\in A-\mathbb{R}_{>}\cdot k$. $\Rightarrow \varphi _{A,k}(y)< 0$. This implies $y\in \mathop{\rm int}A$ by (\ref{int_in_equal}). Thus (\ref{vor-H2}) is fulfilled. \\
(\ref{vor-H2}) is equivalent to (\ref{bd-H2}) because of (\ref{r250}). (\ref{int_in_equal}) is equivalent to (\ref{r0_eq}) because of (\ref{f-r252n}).\\
Consequently, (\ref{vor-H2}), (\ref{bd-H2}), (\ref{int_in_equal}) and (\ref{r0_eq}) are equivalent. \\
Assume that (\ref{int_in_equal}) holds. This implies the upper semicontinuity and continuity of $\varphi _{A,k}$ on $\operatorname*{dom}\varphi _{A,k}$.
Consider some arbitrary $y\in\operatorname*{dom}\varphi _{A,k}$. $\Rightarrow \exists a\in A, t\in \mathbb{R}: y=a+tk$.
Then for each $\lambda\in \mathbb{R}_{>}$: $y=a-\lambda k+(\lambda +t)k\in \mathop{\rm int}A+(\lambda +t)k=\mathop{\rm int}(A+(\lambda +t)k)\subseteq \mathop{\rm int}(A+\mathbb{R}k)$.
Thus $\operatorname*{dom}\varphi _{A,k}=A+\mathbb{R}k$ is open.\\
If $\operatorname*{dom}\varphi _{A,k}$ is open and $\varphi _{A,k} $ is continuous on $\operatorname*{dom}\varphi _{A,k}$, then $\varphi _{A,k} $ is upper semicontinuous on $\operatorname*{dom}\varphi _{A,k}$
and $\operatorname*{lev}_{\varphi_{A,k},<}(t)$ is open for each $t\in\mathbb{R}$. This implies (\ref{int_in_equal}) since
$tk+ \operatorname*{int}A\subseteq \operatorname*{lev}_{\varphi_{A,k},<}(t) \subseteq tk+A$ for each $t\in\mathbb{R}$.\\
The equivalence of (\ref{in_core}) and (\ref{core_in_equal}) follows in an analogous way as the equivalence of (\ref{vor-H2}) and (\ref{int_in_equal}).\\
If (\ref{in_core}) holds, then $\mathbb{R} k+A\subseteq \mathbb{R} k+\operatorname*{core}A\subseteq \operatorname*{core}(\mathbb{R} k+A)$, i.e.
$\operatorname*{dom}\varphi _{A,k} \subseteq\operatorname*{core}\operatorname*{dom}\varphi _{A,k}$.
\item[(b)] Consider $y\in Y$. 
$y+\mathbb{R}k\subseteq A \Leftrightarrow \forall t\in\mathbb{R}:\; y+tk\in A \Leftrightarrow \forall t\in\mathbb{R}:\; y\in -tk+A$
$\;\Leftrightarrow \forall t\in\mathbb{R}:\;\varphi_{A,k}(y)\le -t \Leftrightarrow \varphi_{A,k}(y)=-\infty$.
\item[(c)] (b) implies for $y\in Y$ with $\varphi_{A,k}(y)=-\infty$: $\;y=y+0\cdot k\in A$.
\item[(d)] Because of (b), $\varphi_{A,k}$ is proper $\Leftrightarrow$ (\ref{r255nx}).
(\ref{r255nx}) is equivalent to (\ref{r255nx-inA}) since $y+\mathbb{R}k\subseteq A$ implies $y=y+0\cdot k\in A$.
\item[(e)] Consider some arbitrary $a\in A$. Because of (\ref{r255nx-inA}), there exists some $t\in \mathbb{R}$ such that $a+tk\notin A$. 
$(H1_{A,k})$ implies $t>0$. $\Rightarrow \exists \lambda \in (0,1]:\quad \lambda a+(1-\lambda )(a+tk)\in \operatorname{bd} A$, 
i.e. $a+(1-\lambda )tk \in \operatorname{bd} A$. $\Rightarrow a\in \operatorname{bd} A-\mathbb{R}_{+} k$. Consequently, $A\subseteq \operatorname*{bd}A -\mathbb{R}_{+} k$.
This and $(H1_{A,k})$ yield (\ref{a_eq_bd}).
Because of (\ref{dom_Ak}) we get (\ref{dom_bd}).
\item[(f)] results from (d) and $\operatorname*{dom}\varphi_{A,k} ={\mathbb{R}}k + A$.
\item[(g)] Suppose first that $A$ is convex. Then $\operatorname*{dom}\varphi _{A,k}$ is convex. Take $(y^{1},t_1)$, $(y^{2},t_2)\in \operatorname*{epi}\varphi _{A,k}$ 
 and $\lambda\in[0,1]$. $\Rightarrow y^i\in t_{i}k+A$ for $i\in \{1,2\}$. 
Then $\lambda y^{1}+(1-\lambda )y^{2}\in (\lambda t_{1}+(1-\lambda) t_{2})k+(\lambda A+ (1-\lambda) A)\subseteq (\lambda t_{1}+(1-\lambda) t_{2})k+A$.
$\Rightarrow \operatorname*{epi}\varphi _{A,k}$ is convex. Hence $\varphi _{A,k}$ is convex.\\
Assume now that $\varphi_{A,k} $ is convex. 
Take $a^{1}, a^{2}\in A$ and $\lambda\in[0,1]$. $\Rightarrow (a^{1},0),(a^{2},0)\in \operatorname*{epi}\varphi _{A,k}$. 
$\Rightarrow (\lambda a^{1}+(1-\lambda ) a^2,0)\in \operatorname*{epi}\varphi _{A,k}$ since $\operatorname*{epi}\varphi _{A,k}$ is convex.
$\Rightarrow \lambda a^{1}+(1-\lambda ) a^2\in A$. Thus $A$ is convex.\\
$\varphi_{A,k} $ is quasiconvex if and only if $A$ is convex because of (\ref{f-r252n}).
\item[(h)] Suppose first that $A$ is a cone. Then $\operatorname*{dom}\varphi _{A,k}$ is a cone. Take $(y,t)\in \operatorname*{epi}\varphi _{A,k}$ 
 and $\lambda\in\mathbb{R}_{+}$. $\Rightarrow y\in tk+A$. 
Then $\lambda y\in \lambda tk+\lambda A\subseteq \lambda tk +A$.
$\Rightarrow \operatorname*{epi}\varphi _{A,k}$ is a cone. Hence $\varphi _{A,k}$ is positively homogeneous.\\
Assume now that $\varphi_{A,k} $ is positively homogeneous. 
Take $a\in A$ and $\lambda\in\mathbb{R}_{+}$. $\Rightarrow (a,0)\in \operatorname*{epi}\varphi _{A,k}$. 
$\Rightarrow (\lambda a,0)\in \operatorname*{epi}\varphi _{A,k}$ since $\operatorname*{epi}\varphi _{A,k}$ is a cone.
$\Rightarrow \lambda a\in A$. Thus $A$ is a cone.\\
Assume now that $A$ is a cone and $k\in -\operatorname*{core} A$. Then $Y=A+\mathbb{R}_{>}k=\operatorname*{dom}\varphi _{A,k}$.
The second part of the last statement of part (h) of the Theorem results from Theorem \ref{t251} (a) and $A-\mathbb{R}_{>}k\subseteq A+\operatorname*{core}A=\operatorname*{core}A$.
\item[(i)] Suppose first that $A+A\subseteq A$. Then $\operatorname*{dom}\varphi _{A,k}+\operatorname*{dom}\varphi _{A,k}\subseteq\operatorname*{dom}\varphi _{A,k}$.
Take $(y^{1},t_1),(y^{2},t_2)\in \operatorname*{epi}\varphi _{A,k}$. 
$\Rightarrow y^i\in t_{i}k+A$ for $i\in \{1,2\}$. Then $y^{1}+y^{2}\in (t_{1}+t_{2})k+A$.
$\Rightarrow \operatorname*{epi}\varphi _{A,k}+\operatorname*{epi}\varphi _{A,k}\subseteq\operatorname*{epi}\varphi _{A,k}$. Hence $\varphi _{A,k}$ is subadditive.\\
Assume now that $\varphi_{A,k} $ is subadditive. 
Take $a^{1}, a^{2}\in A$. $\Rightarrow (a^{1},0),(a^{2},0)\in \operatorname*{epi}\varphi _{A,k}$. 
$\Rightarrow (a^{1}+a^2,0)\in \operatorname*{epi}\varphi _{A,k}$ since $\operatorname*{epi}\varphi _{A,k}+\operatorname*{epi}\varphi _{A,k}\subseteq\operatorname*{epi}\varphi _{A,k}$.
$\Rightarrow a^{1}+a^2\in A$. Thus $A+A\subseteq A$.
\item[(j)] follows from (g) and (h) since a functional is sublinear if and only if it is convex and positively homogeneous.
\end{itemize}
\end{proof}
Moreover, we will prove in Proposition \ref{varphi_cx_proper} that $\varphi_{A,k}$ does not attain any real value if $k\in (-0^+A)\cap 0^+A$. Further statements for the important case $k\in -\operatorname*{core}0^+A$ will be given in Proposition \ref{k-core}.

\begin{remark}
Property (\ref{f-r255nn}) is called translation invariance and plays an important role in several proofs as well as for applications in risk theory. It was shown for $\varphi _{A,k}$ in \cite{GopTamZal:00}. Hamel \cite[Proposition 1]{Ham12} pointed out that each translation-invariant functional can be presented in the form $\varphi _{A,k}$ with a $k$-directionally closed set $A$ for which $k\in -0^+A$.
\end{remark}

Assumption $(H1_{A,k})$ could also be formulated in other ways.
\begin{proposition}\label{H1-alternat}
Suppose that $Y$ is a topological vector space and $A$ is a closed proper subset of $\,Y$.\\
The following conditions are equivalent to each other for $A$ and $k\in Y\setminus\{0\}$.
\begin{itemize}
\item[(a)] $k\in -0^+A$.
\item[(b)] $A=H-C$ for some proper subset $H$ of $\,Y$ and some convex cone $C\subset Y$ with $k\in C$.
\item[(c)] $A=A-C$ for some non-trivial closed convex cone $C\subset Y$ with $k\in C$.
\item[(d)] $A=A-C$ for some non-trivial cone $C\subset Y$ with $k\in C$.
\end{itemize}
\end{proposition}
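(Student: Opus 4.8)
The plan is to prove the four statements equivalent by the cycle (a) $\Rightarrow$ (c) $\Rightarrow$ (d) $\Rightarrow$ (a), and to attach (b) to this equivalence class by showing (a) $\Rightarrow$ (b) and (b) $\Rightarrow$ (a). The single fact I would lean on throughout is the remark following Proposition \ref{prop-vor-theo}, namely that $k\in -0^+A$ is equivalent to $A=A-\mathbb{R}_{+}k$. Since $0\in\mathbb{R}_{+}k$ forces the inclusion $A\subseteq A-\mathbb{R}_{+}k$ automatically, the real content of each implication landing on (a) will always be the reverse inclusion $A-\mathbb{R}_{+}k\subseteq A$.

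For the implications out of (a), the canonical choice is $C:=\mathbb{R}_{+}k$, the closed ray generated by $k$. First I would check that this $C$ qualifies as a witness: it is a closed convex cone, it contains $k=1\cdot k$, and it is non-trivial because $k\neq 0$ gives $C\neq\{0\}$, while $-k\notin C$ (otherwise $-k=tk$ with $t\geq 0$ would force $(t+1)k=0$, hence $k=0$) gives $C\neq Y$. With this $C$, the cited remark yields $A=A-\mathbb{R}_{+}k=A-C$, which is exactly (c); and taking $H:=A$, a proper subset of $Y$ by hypothesis, together with the same convex cone $C$ gives $A=H-C$, which is (b). The step (c) $\Rightarrow$ (d) is then immediate, since a closed convex cone is in particular a cone and both the witnessing set $C$ and the membership $k\in C$ are retained unchanged.

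The two implications (d) $\Rightarrow$ (a) and (b) $\Rightarrow$ (a) are the heart of the argument, and both reduce to verifying $A-\mathbb{R}_{+}k\subseteq A$. For (d), given $A=A-C$ with $C$ a cone containing $k$, I note that $tk\in C$ for every $t\in\mathbb{R}_{+}$ by the cone property, so for any $a\in A$ and $t\geq 0$ we get $a-tk\in A-C=A$; hence $k\in -0^+A$. For (b), given $A=H-C$ with $C$ a convex cone containing $k$, I would use that a convex cone is closed under addition (if $c_1,c_2\in C$ then $\tfrac12 c_1+\tfrac12 c_2\in C$ by convexity and $c_1+c_2\in C$ by the cone property): writing an arbitrary $a\in A$ as $a=h-c$ with $h\in H$, $c\in C$, and noting $c+tk\in C$ for $t\geq 0$, we obtain $a-tk=h-(c+tk)\in H-C=A$, so again $k\in -0^+A$.

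I expect the only genuine subtlety to be bookkeeping rather than depth. The key point to keep straight is that in (d) the cone property alone (closure under nonnegative scalar multiples) suffices, whereas (b) additionally needs closure under addition, which is precisely why (b) is phrased with a convex cone while (d) is not. The remaining items — non-triviality and closedness of $\mathbb{R}_{+}k$, and the automatic inclusion $A\subseteq A-C$ coming from $0\in C$ — are routine once spelled out, and the closedness of $A$ plays no active role in these equivalences beyond being part of the standing hypothesis $(H1_{A,k})$.
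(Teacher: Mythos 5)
Your implications (a) $\Rightarrow$ (b), (b) $\Rightarrow$ (a), (c) $\Rightarrow$ (d) and (d) $\Rightarrow$ (a) are all correct, and your observation that convexity of $C$ (closure under addition) is exactly what (b) $\Rightarrow$ (a) needs while the bare cone property suffices for (d) $\Rightarrow$ (a) is the right bookkeeping. The gap is in (a) $\Rightarrow$ (c): you assert that $C=\mathbb{R}_{+}k$ is a \emph{closed} convex cone, but this paper does not assume its topological vector spaces to be Hausdorff, and in a non-Hausdorff space a ray need not be closed. For instance, in $Y=\mathbb{R}^{2}$ topologized by the seminorm $p(y)=|y_{1}|$ with $k=(1,0)^{T}$, the closure of $\mathbb{R}_{+}k$ is $\mathbb{R}_{+}\times\mathbb{R}$. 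That this is a genuine issue in the paper's setting, and not pedantry, is visible in the parallel Proposition \ref{H2-alternat}: there the closed-cone variant (e) is stated only under the extra hypothesis that $Y$ is Hausdorff, precisely because the witness used is $C=\mathbb{R}_{+}k$. Your closing remark that the closedness of $A$ plays no active role in these equivalences is a symptom of the same oversight: closedness of $A$ is exactly what must be exploited to manufacture a closed witness cone for (c).

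The paper avoids the problem by taking $C=-0^{+}A$ as the witness in both (a) $\Rightarrow$ (b) and (a) $\Rightarrow$ (c): $0^{+}A$ is a convex cone, it is closed whenever $A$ is closed (it is the intersection over $a\in A$ and $t>0$ of the closed sets $\{u\in Y\mid a+tu\in A\}$), it is non-trivial because $k\in(-0^{+}A)\setminus\{0\}$ and because $0^{+}A=Y$ would force $A=Y$, and $A=A+0^{+}A=A-C$ holds by the definition of the recession cone together with $0\in 0^{+}A$. Alternatively, you can salvage your ray at the cost of one extra step: put $C:=\operatorname*{cl}(\mathbb{R}_{+}k)$, which is again a convex cone containing $k$; then $A-C=\bigcup_{a\in A}\operatorname*{cl}(a-\mathbb{R}_{+}k)\subseteq\operatorname*{cl}A=A$ because translations are homeomorphisms and $a-\mathbb{R}_{+}k\subseteq A$, while $C\not=Y$ since $C=Y$ would give $Y=A-C\subseteq A$, contradicting properness of $A$. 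Note that either repair uses the closedness of $A$, confirming that it is an active hypothesis here; with that one step added, your ray-based route is a perfectly good and somewhat more concrete alternative to the paper's recession-cone witness, and in a Hausdorff space your proof is correct as written.
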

\begin{proof}
\begin{itemize}
\item[]
\item[(i)] (a) implies (b) with $H=A$ and $C=-0^+A$. (b) implies (a) since $C\subseteq -0^+A$.
\item[(ii)] (a) implies (c) with $C=-0^+A$. (c) yields (d). (d) implies (a) because of $C\subseteq -0^+A$.
\end{itemize}
\end{proof}

\begin{remark}
One of the basic assumptions in production theory is the free-disposal assumption $A=A-C$, where $C$ is the ordering cone.
\end{remark}

In general, the inclusion in (\ref{r0}) is strict even for $t = 0$. The following example illustrates that $\varphi_{A,k}$ in Theorem \ref {t251} can take values other than zero on the boundary of A (finite ones as well as $-\infty$) and that $\{y \in Y\mid \varphi _{A,k} (y)<t \}=t k+\mathop{\rm int}A$ does not necessarily hold.
The example also points out that $\operatorname*{dom}\varphi _{A,k}$ may be open though $A-\mathbb{R}_{>}\cdot k\not\subseteq \operatorname*{int} \;A$.

\begin{example}\label{ex1}
Consider $Y=\mathbb{R}^{2}$, $k:=(1,0)^T$ and
$A:=\{ (y_1,y_2)^T\in\mathbb{R}^{2}\mid y_1\le -1\} \cup$ $\{ (y_1,y_2)^T\in\mathbb{R}^{2}\mid y_1\le 0, y_2\le 0\}\cup \{ (y_1,y_2)^T\in\mathbb{R}^{2}\mid y_2\le -1\}$.

Then 
\[ \varphi_{A,k}((y_1,y_2)^T)=\left\{
\begin{array}{r@{\quad\mbox{ if }\quad}l}
-\infty & y_2\le -1,\\
y_1 & -1<y_2\le 0,  \\
y_1+1 & y_2>0 .
\end{array}
\right.
\]
In particular, $\varphi_{A,k}((0,-1)^T)=-\infty$ and $\varphi_{A,k}((-1,0)^T)=-1$, though $(0,-1)^T\in\operatorname*{bd}A$ and $(-1,0)^T\in\operatorname*{bd}A$.\\
$\varphi_{A,k}$ is not continuous in $(0,-1)^T$, but $\operatorname*{dom}\varphi _{A,k}=\mathbb{R}^{2}$ is open.
\end{example}

$\varphi_{A,k}$ can be continuous though (\ref{vor-H2}) is not fulfilled.

\begin{example}\label{ex-Lip-bd}
Consider $Y=\mathbb{R}^{2}$, $k:=(1,0)^T$ and
$A:=-\mathbb{R}_{+}^{2}$.
$\varphi_{A,k}$ is continuous, but $A-\mathbb{R}_{>}\cdot k\not\subseteq \operatorname*{int} \;A$.
\end{example}

$\operatorname{core}A$ and $\operatorname{int}A$ may differ under assumption $(H1_{A,k})$.

\begin{example}\label{ex-core-not-int}
Consider the Euclidean space $Y=\mathbb{R}^{2}$, $k:=(1,0)^T$ and
$A:= \mathbb{R}^{2}\setminus \{ (y_1,y_2)^T\mid y_2\not=0,y_1>0,-y_1^2< y_2<y_1^2\}$.
Then $(H1_{A,k})$ holds, but $0\in \operatorname{core}A\setminus\operatorname{int}A$. Moreover, $0\notin\operatorname{core}(\operatorname{core}A$). 
\end{example}

$(H1_{A,k})$ and $A=\operatorname*{bd}A -\mathbb{R}_{+} k$ do not imply (\ref{r255nx-inA}) or $\operatorname{bd}A-\mathbb{R}_{>}  k\subseteq \operatorname{int}A $.
\begin{example}\label{ex-bd-line}
In $Y=\mathbb{R}^{2}$, define $k:=(1,0)^T$, $A:=\{ (y_1,y_2)^T\mid y_2=0\}$.
Then $A-[0, + \infty)\cdot k\subseteq A$ and  $A=\operatorname*{bd}A -\mathbb{R}_{+} k$ are fulfilled, but $A$ is a line parallel to $k$. Here, $\operatorname{int}A=\emptyset.$
\end{example}

The functional $\varphi_{A,k}$ has been constructed in such a way that it can be used for the separation of not necessarily convex sets. 

We need the following Lemma for the separation theorem to come.

\begin{lemma}\label{l-sep-cl}
Consider two subsets $S_1, S_2$ of a topological space $Y$.
\begin{itemize}
\item[(a)] $\operatorname*{int}S_1\cap S_2=\emptyset\iff \operatorname*{int} S_1\cap \operatorname*{cl} S_2=\emptyset$.
\item[(b)] If $S_1\subseteq \operatorname*{cl}\operatorname*{int} S_1$, then:\\
$\;\operatorname*{int} S_1\cap S_2=\emptyset\implies S_1\cap \operatorname*{int} S_2=\emptyset$.
\end{itemize}
\end{lemma}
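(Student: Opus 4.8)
The plan is to treat the two parts separately, since (a) is a pure point-set fact requiring no hypotheses while (b) genuinely uses the assumption $S_1\subseteq\operatorname*{cl}\operatorname*{int}S_1$. For part (a), the implication $\Leftarrow$ is immediate from $S_2\subseteq\operatorname*{cl}S_2$: if $\operatorname*{int}S_1$ misses the larger set $\operatorname*{cl}S_2$, it misses $S_2$. For the forward implication I would rephrase the hypothesis $\operatorname*{int}S_1\cap S_2=\emptyset$ as the inclusion $S_2\subseteq Y\setminus\operatorname*{int}S_1$. Since $\operatorname*{int}S_1$ is open, its complement $Y\setminus\operatorname*{int}S_1$ is closed, and because $\operatorname*{cl}S_2$ is by definition the smallest closed superset of $S_2$, I get $\operatorname*{cl}S_2\subseteq Y\setminus\operatorname*{int}S_1$, which is exactly $\operatorname*{int}S_1\cap\operatorname*{cl}S_2=\emptyset$. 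This is the whole content of (a).

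For part (b) I would argue by contradiction. Assume the hypothesis $S_1\subseteq\operatorname*{cl}\operatorname*{int}S_1$ together with $\operatorname*{int}S_1\cap S_2=\emptyset$, and suppose there were a point $x\in S_1\cap\operatorname*{int}S_2$. From $x\in S_1$ and the standing assumption I conclude $x\in\operatorname*{cl}\operatorname*{int}S_1$, so every neighbourhood of $x$ meets $\operatorname*{int}S_1$. But $\operatorname*{int}S_2$ is an open set containing $x$, hence it is such a neighbourhood; therefore $\operatorname*{int}S_2\cap\operatorname*{int}S_1\neq\emptyset$. Picking a point $z$ in this intersection gives $z\in\operatorname*{int}S_1$ and $z\in\operatorname*{int}S_2\subseteq S_2$, so $z\in\operatorname*{int}S_1\cap S_2$, contradicting the assumption. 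Hence no such $x$ exists and $S_1\cap\operatorname*{int}S_2=\emptyset$.

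There is no serious obstacle here; both statements are elementary topology. The only point that needs care is bookkeeping about which set sits inside a closure and which neighbourhood is used: in (b) one must invoke the characterization of closure (a point lies in $\operatorname*{cl}T$ iff every neighbourhood of it meets $T$) in the correct direction, applying it to $T=\operatorname*{int}S_1$ and to the neighbourhood $\operatorname*{int}S_2$. One should also resist the temptation to treat the roles of $S_1$ and $S_2$ symmetrically: the implication in (b) is one-directional precisely because the density hypothesis is imposed only on $S_1$, and it is exactly this asymmetry that makes the contradiction go through. Note finally that (a) could be reused inside (b) (it yields $\operatorname*{int}S_1\cap\operatorname*{cl}S_2=\emptyset$), but the direct neighbourhood argument above is shorter and self-contained.
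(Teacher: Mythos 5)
Your proof is correct and takes essentially the same route as the paper: your neighbourhood-based contradiction in (b) is just the contrapositive unfolding of the paper's inclusion chain $\operatorname*{int}S_1\cap \operatorname*{int}S_2=\emptyset \Rightarrow \operatorname*{cl}\operatorname*{int}S_1\cap \operatorname*{int}S_2=\emptyset \Rightarrow S_1\cap \operatorname*{int}S_2=\emptyset$, which rests on the same fact (an open set missing a set also misses its closure) that you invoke via the neighbourhood characterization of closure. For (a) the paper merely cites a reference, so your smallest-closed-superset argument simply supplies the details the paper omits.
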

\begin{proof}
\begin{itemize}
\item[]
\item[(a)] results immediately from the definition of interior and closure (see \cite[Satz 2.4.2]{Wei85}).
\item[(b)] $\operatorname*{int} S_1\cap S_2=\emptyset\Rightarrow\operatorname*{int} S_1\cap \operatorname*{int}S_2=\emptyset\Rightarrow\operatorname*{cl}\operatorname*{int} S_1\cap \operatorname*{int}S_2=\emptyset\Rightarrow S_1\cap \operatorname*{int}S_2=\emptyset$ since $S_1\subseteq \operatorname*{cl}\operatorname*{int} S_1$.
\end{itemize}
\end{proof}

\begin{theorem}\label{t-sep-all}
Assume $(H1_{A,k})$ and $D\subseteq Y$.
Then $\varphi_{A,k}(a)\leq 0\;\forall a\in A$ and $\varphi_{A,k}(a) < 0 \;\forall a\in \operatorname*{core} A$.
\begin{itemize}
\item[(1)] $A\cap D=\emptyset\iff  \varphi_{A,k}(d)\not\leq 0\;\forall d\in D$.
\item[(2)] $\varphi_{A,k}(d)\not< 0\;\forall d\in D\implies \operatorname*{core} A\cap D=\emptyset$.
\item[(3)] If $A-\mathbb{R}_{>}\cdot k\subseteq \operatorname*{core}A$, then :\\
$\operatorname*{core} A\cap D=\emptyset\iff \varphi_{A,k}(d)\not< 0 \;\forall d\in D$.
\end{itemize}
\end{theorem}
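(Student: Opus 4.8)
The plan is to reduce every assertion to the level-set identities already established in Theorem \ref{t251}, read off at the single threshold $t=0$. The two preliminary inequalities are immediate: evaluating (\ref{f-r252n}) at $t=0$ gives $\operatorname*{lev}_{\varphi_{A,k},\le}(0)=A$, so every $a\in A$ satisfies $\varphi_{A,k}(a)\le 0$; evaluating (\ref{core_in_less}) at $t=0$ gives $\operatorname*{core}A\subseteq\operatorname*{lev}_{\varphi_{A,k},<}(0)$, so every $a\in\operatorname*{core}A$ satisfies $\varphi_{A,k}(a)<0$.

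For part (1), I would use (\ref{f-r252n}) at $t=0$ in its full strength: it asserts $A=\{y\in Y\mid\varphi_{A,k}(y)\le 0\}$, hence for $d\in D$ the membership $d\in A$ is equivalent to $\varphi_{A,k}(d)\le 0$. Reading this over all of $D$, the condition $A\cap D=\emptyset$ says precisely that no $d\in D$ lies in $A$, i.e. $\varphi_{A,k}(d)\not\le 0$ for every $d\in D$. This yields the stated equivalence with no further work.

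For part (2), I would use only the inclusion (\ref{core_in_less}) at $t=0$, which gives the single implication $d\in\operatorname*{core}A\Rightarrow\varphi_{A,k}(d)<0$. Its contrapositive states that $\varphi_{A,k}(d)\not<0$ forces $d\notin\operatorname*{core}A$; applying this to every $d\in D$ produces $\operatorname*{core}A\cap D=\emptyset$. For part (3) I would invoke the sharper equivalence from Theorem \ref{t251}(a): under the extra hypothesis (\ref{in_core}), condition (\ref{core_in_equal}) holds, so at $t=0$ we obtain the equality $\{y\in Y\mid\varphi_{A,k}(y)<0\}=\operatorname*{core}A$. Then $d\in\operatorname*{core}A$ is equivalent to $\varphi_{A,k}(d)<0$, and the same argument over all of $D$ as in part (1) delivers the equivalence.

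There is no genuine technical obstacle here; the content lies entirely in Theorem \ref{t251}, and the proof is essentially bookkeeping at $t=0$. The only point deserving care is conceptual: why (2) is merely an implication while (3) is a full equivalence. The gap is exactly the possible strictness of the inclusion (\ref{core_in_less}) --- in general $\operatorname*{core}A$ can be a proper subset of the strict sublevel set $\{\,\varphi_{A,k}<0\,\}=A-\mathbb{R}_{>}\cdot k$ (see Example \ref{ex1}, where a point such as $(-0.5,0)^T$ has $\varphi_{A,k}<0$ yet fails to lie in $\operatorname*{core}A$), so a point $d\in D$ with $\varphi_{A,k}(d)<0$ need not belong to $\operatorname*{core}A$; this blocks the converse of (2). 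The hypothesis (\ref{in_core}) is precisely what collapses the inclusion into the equality (\ref{core_in_equal}) and thereby restores the converse in (3).
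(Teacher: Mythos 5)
Your proof is correct and follows exactly the paper's own route: the paper likewise derives the preliminary inequalities from Theorem \ref{t251}, obtains (1) and (2) from (\ref{f-r252n}) and (\ref{core_in_less}) read at $t=0$, and gets (3) from the equivalence of (\ref{in_core}) and (\ref{core_in_equal}) in Theorem \ref{t251}(a). Your closing remark on why (2) is only an implication --- the possible strictness of $\operatorname*{core}A\subseteq\{\varphi_{A,k}<0\}=A-\mathbb{R}_{>}\cdot k$, witnessed in Example \ref{ex1} --- is accurate and a nice addition, but not a departure from the paper's argument.
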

\begin{proof}
The first statement results from Theorem \ref{t251}. (1) and (2) follow from (\ref{f-r252n}) and (\ref{core_in_less}), respectively. (3) is implied by Theorem \ref{t251} (a).
\end{proof}

Note that $\not\leq$ and $\not<$ can only be replaced by $>$ and $\geq$, respectively, if $Y={\mathbb{R}}k+A$.

The values of $\varphi _{A,k}$ are connected with the values of $\varphi _{0^+A,k}$.
\begin{proposition}\label{p-varphi_rec}
Assume $(H1_{A,k})$. Then $(H1_{0^+A,k})$ holds.\\
For $y^0\in A+\mathbb{R}k$ and $y^1\in 0^+A+\mathbb{R}k$, we get $y^0+y^1\in A+\mathbb{R}k$ and
\begin{equation}
\varphi_{A,k}(y^0+y^1)\leq \varphi_{A,k}(y^0)+\varphi_{0^+A,k}(y^1).
\end{equation}
\end{proposition}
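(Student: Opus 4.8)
The plan is to split the proof into two independent parts: first verifying that $(H1_{0^+A,k})$ holds, and then establishing the subadditivity-type estimate. For the first part I would check the three ingredients of $(H1_{0^+A,k})$ in turn. Since $Y$ is already a topological vector space, it remains to show that $0^+A$ is a closed proper subset of $Y$ and that $k\in -0^+(0^+A)\setminus\{0\}$. Closedness of $0^+A$ follows from the closedness of $A$: if a net $u_\alpha\in 0^+A$ converges to $u$, then for fixed $a\in A$ and $t\geq 0$ the net $a+tu_\alpha\in A$ converges to $a+tu$, whence $a+tu\in A$ and $u\in 0^+A$. Properness follows because $A$ is nonempty by (\ref{dom_Ak}) and proper: if $0^+A=Y$, then picking $a\in A$ gives $a+u\in A$ for every $u\in Y$, so $A=Y$, a contradiction.

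The one slightly less routine point is $k\in -0^+(0^+A)$, that is $-k\in 0^+(0^+A)$. I would establish it by showing that for every $u\in 0^+A$ and every $t\geq 0$ one has $u-tk\in 0^+A$, checking its defining property directly: for $a\in A$ and $s\geq 0$ write $a+s(u-tk)=(a+su)-(st)k$, observe that $a+su\in A$ because $u\in 0^+A$, and then use $-k\in 0^+A$ to conclude $(a+su)-(st)k\in A$. This simultaneous use of the two recession-cone memberships is the crux of the first part.

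For the second part, the key structural fact is $A+0^+A\subseteq A$, which is immediate from the definition of the recession cone by taking $t=1$. I would then argue at the level of the defining infima. If $y^0\in t_0 k+A$ and $y^1\in t_1 k+0^+A$, then $y^0+y^1\in (t_0+t_1)k+(A+0^+A)\subseteq (t_0+t_1)k+A$; in particular $y^0+y^1\in\mathbb{R}k+A$, which gives the membership claim. Hence the set $\{t\mid y^0+y^1\in tk+A\}$ contains the sumset $\{t_0\mid y^0\in t_0 k+A\}+\{t_1\mid y^1\in t_1 k+0^+A\}$, and taking infima yields the desired inequality, since the infimum of a sumset equals the sum of the infima.

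The main thing to watch is the extended-real arithmetic: both $\varphi_{A,k}(y^0)$ and $\varphi_{0^+A,k}(y^1)$ may equal $-\infty$, so I must ensure the right-hand side is well defined (it is, since $\varphi_{A,k}$ never attains $+\infty$) and that the infimum-of-sumset identity remains valid in $\mathbb{R}\cup\{-\infty\}$. The hypotheses $y^0\in A+\mathbb{R}k$ and $y^1\in 0^+A+\mathbb{R}k$ guarantee that both index sets are nonempty, which is exactly what legitimizes the infimum manipulation; this is the only place where care is needed, and it is a minor bookkeeping point rather than a genuine obstacle.
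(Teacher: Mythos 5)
Your proof is correct and takes essentially the same route as the paper's: both rest on the inclusion $A+0^+A\subseteq A$ and on comparing the defining infima, noting that $y^0\in t_0k+A$ and $y^1\in t_1k+0^+A$ give $y^0+y^1\in (t_0+t_1)k+A$. You merely spell out details the paper leaves implicit (closedness and properness of $0^+A$, the membership $-k\in 0^+(0^+A)$, and the extended-real bookkeeping when values equal $-\infty$), and you handle all of them correctly.
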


\begin{proof}
Since $A$ is closed, $0^+A$ is closed and $(H1_{0^+A,k})$ holds.
Consider arbitrary values $t_0, t_1\in\mathbb{R}$ for which $y^0\in A+t_0k, y^1\in 0^+A+t_1k$ is satisfied.
$\Rightarrow y^0+y^1\in A+0^+A+t_0k+t_1k\subseteq A+(t_0+t_1)k$. Thus $\varphi_{A,k}(y^0)\leq t_0$ and $\varphi_{0^+A,k}(y^1)\leq t_1$ imply $\varphi_{A,k}(y^0+y^1)\leq t_0+t_1$.
The assertion follows.
\end{proof}

Many statements which connect $\varphi_{A,k}$ with $\varphi_{0^+A,k}$ remain valid if we replace $\varphi_{0^+A,k}$ by $\varphi_{C,k}$ with $C\subseteq 0^+A$ being a closed cone with $k\in -C$ since we have:

\begin{proposition}
Assume $(H1_{A,k})$ and that $A_0$ is a proper closed subset of $A$ with $k\in -0^+ A_0$. Then $\operatorname*{dom}\varphi_{A_0,k}\subseteq\operatorname*{dom}\varphi_{A,k}$ and
$\varphi_{A,k}(y)\leq \varphi_{A_0,k}(y)\;\forall y\in\operatorname*{dom}\varphi_{A_0,k}$.
\end{proposition}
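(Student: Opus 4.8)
The plan is to reduce the whole statement to the order-reversing behaviour of the infimum in the defining formula (\ref{funcak0}) under the set inclusion $A_0\subseteq A$, after first recording that $\varphi_{A_0,k}$ is itself a legitimate function with uniform level sets. So the first thing I would check is that $(H1_{A_0,k})$ holds, which makes Theorem \ref{t251} available for $\varphi_{A_0,k}$: the set $A_0$ is closed by hypothesis, it is a proper subset of $Y$ since $A_0\subseteq A$ and $A$ is proper, the inclusion $k\in -0^+A_0$ is assumed, and $k\neq 0$ is inherited from $(H1_{A,k})$. With $(H1_{A_0,k})$ in hand, formula (\ref{dom_Ak}) applied to both sets gives $\operatorname*{dom}\varphi_{A_0,k}=\mathbb{R}k+A_0$ and $\operatorname*{dom}\varphi_{A,k}=\mathbb{R}k+A$; since $A_0\subseteq A$, these yield $\operatorname*{dom}\varphi_{A_0,k}\subseteq\operatorname*{dom}\varphi_{A,k}$ at once, settling the first assertion.

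For the inequality I would argue directly from the definition (\ref{funcak0}). Fixing $y\in Y$, the inclusion $A_0\subseteq A$ gives $\{t\in\mathbb{R}\mid y\in tk+A_0\}\subseteq\{t\in\mathbb{R}\mid y\in tk+A\}$, and enlarging the set over which an infimum is taken can only decrease its value, so $\varphi_{A,k}(y)\leq\varphi_{A_0,k}(y)$. This is the entire content of the comparison; no property beyond set-inclusion monotonicity of the infimum is needed.

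The only point requiring care -- and the reason the statement is phrased on $\operatorname*{dom}\varphi_{A_0,k}$ rather than on all of $Y$ -- is the convention $\inf\emptyset=\nu\notin\overline{\mathbb{R}}$. Restricting to $y\in\operatorname*{dom}\varphi_{A_0,k}$ guarantees that the feasible set for $A_0$ is nonempty, hence so is the larger feasible set for $A$, and the comparison then takes place entirely within $\overline{\mathbb{R}}$, so $\nu$ never interferes. Beyond this bookkeeping I expect no genuine obstacle: the proposition is a direct consequence of monotonicity of the infimum under inclusion, combined with the domain formula (\ref{dom_Ak}) from Theorem \ref{t251}.
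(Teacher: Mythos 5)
Your proof is correct and takes essentially the same route as the paper's: both reduce the inequality to the inclusion $tk+A_0\subseteq tk+A$ together with the monotonicity of the infimum under enlarging the feasible set, and both get the domain inclusion from $\operatorname*{dom}\varphi_{A_0,k}=\mathbb{R}k+A_0\subseteq\mathbb{R}k+A=\operatorname*{dom}\varphi_{A,k}$. The only cosmetic difference is that the paper phrases the comparison through the level-set identity (\ref{f-r252n}) applied to $A_0$ (i.e., via Theorem \ref{t251} under $(H1_{A_0,k})$), whereas you argue directly from the defining infimum (\ref{funcak0}); your variant incidentally shows that the closedness of $A_0$ is not actually needed for the inequality itself.
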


\begin{proof}
Consider some $y\in\operatorname*{dom}\varphi_{A_0,k}$. $\Rightarrow y\in\operatorname*{dom}\varphi_{A,k}$, and for each $t\in\mathbb{R}$ with $\varphi_{A_0,k}(y)\leq t$ we have
$y\in tk+A_0\subseteq tk+A$ and thus $\varphi_{A,k}(y)\leq t$.
\end{proof}

Let us now investigate the influence of the choice of $k$ on the values of $\varphi _{A, k}$.
\begin{proposition}\label{t-scale}
Assume $(H1_{A,k})$, and consider some arbitrary $\lambda\in\mathbb{R}_{>}$.
Then $(H1_{A,\lambda k})$ holds, $\operatorname*{dom}\varphi _{A,\lambda k}=\operatorname*{dom}\varphi _{A,k}$ and
\begin{displaymath}
\varphi_{A,\lambda k}(y)=\frac{1}{\lambda} \varphi_{A,k}(y) \quad \forall\, y\in Y.
\end{displaymath}
$\varphi_{A,\lambda k}$ is 
proper, finite-valued, continuous, lower semicontinuous, upper semicontinuous, convex, concave, strictly quasiconvex, subadditive, superadditive, affine, linear, sublinear, positively homogeneous, odd and homogeneous, respectively, iff $\varphi_{A,k}$ has this property. 
If $B\subset Y$, then $\varphi_{A,\lambda k}$ is 
$B$-monotone and strictly $B$-monotone, respectively,
if $\varphi_{A,k}$ has this property.
\end{proposition}

\begin{proof}
$\varphi _{A, \lambda k}(y)=\inf \{t\in {\mathbb{R}} \mid y\in t(\lambda k) + A\}
=\inf \{t\in {\mathbb{R}} \mid y\in (\lambda t) k + A\}=\inf \{\frac{1}{\lambda} u \mid u\in {\mathbb{R}}, y\in u k + A\}
=\frac{1}{\lambda}\inf \{u\in {\mathbb{R}} \mid y\in u k + A\}
=\frac{1}{\lambda}\varphi _{A, k}(y)\; \forall\, y\in Y$. 
The other assertions follow from this equation.
\end{proof}

The proposition underlines that replacing $k$ by another vector in the same direction just scales the functional. Consequently, $\varphi _{A, k}$ and $\varphi _{A, \lambda k}$, $\lambda >0$, take optimal values on some set $F\subset Y$ in the same elements of $F$. Hence it is sufficient to consider only one vector $k$ per direction in optimization problems, e.g. to restrict $k$ to unit vectors if $Y$ is a normed space.

If $\varphi _{A,k}(0)\in\mathbb{R}$, the functional can be shifted in such a way that the function value in the origin becomes zero and essential properties of the functional do not change.
\begin{proposition}\label{0-shift}
Assume $(H1_{A,k})$, and consider some arbitrary $c\in \mathbb{R}$.
Then $(H1_{A+ck,k})$ holds, $\operatorname*{dom}\varphi _{A+ck,k}=\operatorname*{dom}\varphi _{A,k}$ and
\begin{displaymath}
\varphi_{A+ck,k}(y)=\varphi_{A,k}(y)-c \quad \forall\, y\in Y.
\end{displaymath}
\end{proposition}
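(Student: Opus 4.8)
The plan is to verify the hypothesis $(H1_{A+ck,k})$ first, then obtain the value formula by a direct change of variable in the defining infimum, and finally read off the domain equality from either ingredient.

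First I would check $(H1_{A+ck,k})$. Since $A$ is closed, its translate $A+ck$ is closed; since $A$ is a proper subset of $Y$, so is $A+ck$. The only substantive point is that $k\in -0^+(A+ck)\setminus\{0\}$. This follows from the translation invariance of the recession cone, namely $0^+(A+ck)=0^+A$, which is immediate from the definition $0^+A=\{u\in Y\mid a+tu\in A\ \forall a\in A,\ \forall t\in\mathbb{R}_{+}\}$: shifting every element of $A$ by the fixed vector $ck$ does not change which directions are recession directions. Hence $k\in-0^+A=-0^+(A+ck)$ and $k\neq 0$, so $(H1_{A+ck,k})$ holds.

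Next I would compute the values. By (\ref{funcak0}),
\begin{displaymath}
\varphi_{A+ck,k}(y)=\inf\{t\in\mathbb{R}\mid y\in tk+(A+ck)\}=\inf\{t\in\mathbb{R}\mid y\in (t+c)k+A\}.
\end{displaymath}
Substituting $u=t+c$ turns the index set into $\{u-c\mid u\in\mathbb{R},\ y\in uk+A\}$, whose infimum equals $\inf\{u\in\mathbb{R}\mid y\in uk+A\}-c=\varphi_{A,k}(y)-c$. The substitution is a bijection of the two index sets, so this identity also survives the extended-real-valued conventions: when the infimum is $-\infty$ the value stays $-\infty$, and when the index set is empty both sides equal $\nu$, using $\nu-c=\nu$.

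Finally, the domain statement is immediate. Applying (\ref{dom_Ak}) to the set $A+ck$ gives $\operatorname*{dom}\varphi_{A+ck,k}=\mathbb{R}k+(A+ck)=\mathbb{R}k+A=\operatorname*{dom}\varphi_{A,k}$; alternatively it follows from the value formula, since subtracting the real constant $c$ maps $\overline{\mathbb{R}}$ into $\overline{\mathbb{R}}$ and fixes $\nu$. There is no genuine obstacle here: the computation is routine, and the only place demanding attention is the bookkeeping for the values $-\infty$ and $\nu$, ensuring that the stated equality holds as an identity on all of $Y$.
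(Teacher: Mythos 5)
Your proposal is correct. The paper states Proposition \ref{0-shift} without its own proof, and your argument -- checking $(H1_{A+ck,k})$ via the translation invariance $0^+(A+ck)=0^+A$, then the change of variable $u=t+c$ in the defining infimum with the $\nu$ and $-\infty$ bookkeeping, then reading off the domain from (\ref{dom_Ak}) -- is exactly the substitution pattern the paper uses to prove the neighbouring Propositions \ref{t-scale} and \ref{A-shift}; note also that the value formula drops out in one line from Proposition \ref{A-shift} with $y^0=ck$ together with the translation invariance (\ref{f-r255nn}), since $\varphi_{A+ck,k}(y)=\varphi_{A,k}(y-ck)=\varphi_{A,k}(y)-c$.
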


In vector optimization or when dealing with local ordering structures, the functional is often constructed by sets which depend on some given point $y^0$.
\begin{proposition}\label{A-shift}
Assume $(H1_{A,k})$, and consider some arbitrary $y^{0}\in Y$.
Then $(H1_{y^{0}+A,k})$ is satisfied, $\operatorname*{dom}\varphi _{y^{0}+A,k}=y^{0}+\operatorname*{dom}\varphi _{A,k}$ and
\begin{displaymath}
\varphi_{y^{0}+A,k}(y)=\varphi_{A,k}(y-y^{0}) \quad \forall\, y\in Y.
\end{displaymath}
$\varphi_{y^{0}+A,k}$ is 
proper, finite-valued, continuous, lower semicontinuous, upper semicontinuous, convex, concave, strictly quasiconvex and affine, respectively,
iff $\varphi_{A,k}$ has this property.\\
For $B\subset Y$, $\varphi_{y^{0}+A,k}$ is
$B$-monotone and strictly $B$-monotone, respectively,
iff $\varphi_{A,k}$ has this property.
\end{proposition}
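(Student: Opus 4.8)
The plan is to reduce everything to the single identity $\varphi_{y^0+A,k}(y)=\varphi_{A,k}(y-y^0)$, since $y^0+A$ is merely a translate of $A$ and the construction (\ref{funcak0}) is compatible with translation. First I would verify $(H1_{y^0+A,k})$. In a topological vector space the translation $y\mapsto y+y^0$ is a bijective homeomorphism, so $y^0+A$ is again a closed proper subset of $Y$. For the recession cone I would check directly from its definition that $0^+(y^0+A)=0^+A$: indeed, writing a generic point of $y^0+A$ as $y^0+a$ with $a\in A$, one has $(y^0+a)+tu\in y^0+A$ if and only if $a+tu\in A$, so $a+tu\in A$ for all $a\in A$, $t\in\mathbb{R}_{+}$ holds exactly when the analogous condition holds for $y^0+A$. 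Hence $k\in -0^+(y^0+A)\setminus\{0\}$ follows from $(H1_{A,k})$, establishing $(H1_{y^0+A,k})$.

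Next I would obtain the functional identity by a one-line computation from Definition \ref{d-funcak0}:
\[
\varphi_{y^0+A,k}(y)=\inf\{t\in\mathbb{R}\mid y\in tk+y^0+A\}=\inf\{t\in\mathbb{R}\mid y-y^0\in tk+A\}=\varphi_{A,k}(y-y^0).
\]
The domain identity is then immediate, either from this equation or, more directly, from (\ref{dom_Ak}): $\operatorname*{dom}\varphi_{y^0+A,k}=\mathbb{R}k+(y^0+A)=y^0+(\mathbb{R}k+A)=y^0+\operatorname*{dom}\varphi_{A,k}$.

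Finally, the preservation of the listed properties I would read off from the fact that $\varphi_{y^0+A,k}=\varphi_{A,k}\circ T$, where $T(y):=y-y^0$ is an affine homeomorphism of $Y$ with inverse $z\mapsto z+y^0$, so that $\operatorname*{dom}\varphi_{y^0+A,k}=y^0+\operatorname*{dom}\varphi_{A,k}$ is the corresponding translate of the domain. Properness and finite-valuedness are clear from the value and domain identities. Because $T$ is a homeomorphism carrying the domain onto the domain, lower and upper semicontinuity and continuity transfer in both directions. Because $T$ is affine and bijective it commutes with convex combinations and maps convex sets onto convex sets, while leaving $\max(\varphi(y^1),\varphi(y^2))$ and the distinctness of $y^1,y^2$ unchanged; hence convexity, concavity, affineness and strict quasiconvexity (including the domain-convexity clause) transfer each way. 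For the monotonicity statements I would use that translation leaves differences invariant: writing $z^i:=y^i-y^0$, one has $z^2-z^1=y^2-y^1$ and $\varphi_{y^0+A,k}(y^i)=\varphi_{A,k}(z^i)$, so the defining implication of Definition \ref{d-mon}(a),(b) for $\varphi_{y^0+A,k}$ is logically equivalent to the one for $\varphi_{A,k}$; this two-sided equivalence is exactly why the monotonicity assertions here are stated as ``iff'' (in contrast to the one-directional formulation in Proposition \ref{t-scale}). The only point requiring a little care is this symmetry of the monotonicity argument under the affine change of variable, and the fact that $T$ preserves convex combinations for the quasiconvexity clause; everything else is a direct transcription through $T$.
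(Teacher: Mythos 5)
Your proof is correct and takes essentially the same approach as the paper: the paper's own proof consists precisely of your one-line computation $\varphi_{y^0+A,k}(y)=\inf\{t\in\mathbb{R}\mid y-y^0\in tk+A\}=\varphi_{A,k}(y-y^0)$, followed by the remark that all other properties follow from this identity. Your explicit verification of $(H1_{y^0+A,k})$ via $0^+(y^0+A)=0^+A$ and the transfer of the listed properties through the translation $T(y)=y-y^0$ simply fill in routine details the paper leaves implicit.
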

\begin{proof}
$\varphi _{y^{0}+A, k}=\inf \{t\in {\mathbb{R}} \mid y\in tk + y^{0}+A\}
=\inf \{t\in {\mathbb{R}} \mid y-y^{0}\in tk + A\}
=\varphi _{A, k}(y-y^{0})$. 
The other properties follow from this result.
\end{proof}

We get the following lemma and its corollary from \cite{Zal:86a}.
\begin{lemma}\label{cone_finite_Zal}
Let $C$ be a convex cone in a linear space $Y$ and $k\in Y\setminus\{0\}$.
Then $Y=C+\mathbb{R}k$ holds if and only if
\begin{itemize}
\item[(a)] $C$ is a linear subspace of $Y$ of codimension $1$ and $k\not\in C$, or
\item[(b)] $\{k,-k\}\cap \operatorname*{core}C\not=\emptyset$. 
\end{itemize}
\end{lemma}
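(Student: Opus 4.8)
The plan is to prove the two implications separately and, for the harder (necessity) direction, to organize the argument by cases according to whether $k$ or $-k$ lies in $C$. The key structural observation, which I would aim for in the case $k,-k\notin C$, is that $Y=C+\mathbb{R}k$ together with $k,-k\notin C$ forces $C=-C$, after which $C$ is automatically a subspace and the codimension follows at once. Throughout I use only that $C$ is a convex cone, so $C+C\subseteq C$, and the definition of the algebraic interior.

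For sufficiency I would first assume (a). Then $C+\mathbb{R}k$ is a linear subspace (the sum of the subspace $C$ and the line $\mathbb{R}k$) which contains $C$ strictly because $k\notin C$; since $C$ has codimension $1$, no subspace lies strictly between $C$ and $Y$, so $C+\mathbb{R}k=Y$. Next I would assume (b); by symmetry, using $\mathbb{R}k=\mathbb{R}(-k)$, I may suppose $k\in\operatorname*{core}C$. Given an arbitrary $z\in Y$, applying the definition of the core in the direction $z$ produces some $\delta>0$ with $k+\delta z\in C$. As $C$ is a cone, $\tfrac{1}{\delta}(k+\delta z)\in C$, hence $z=\tfrac{1}{\delta}(k+\delta z)-\tfrac{1}{\delta}k\in C+\mathbb{R}k$, and therefore $Y=C+\mathbb{R}k$.

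For necessity I would assume $Y=C+\mathbb{R}k$ and split into three cases. If $k\in C$, I would show $k\in\operatorname*{core}C$, which gives (b): writing an arbitrary $y\in Y$ as $y=c+\lambda k$ with $c\in C$ and $\lambda\in\mathbb{R}$, for all small $t>0$ one has $1+t\lambda>0$, so $k+ty=tc+(1+t\lambda)k$ is a sum of two elements of the convex cone $C$ and hence lies in $C$. The case $-k\in C$ is symmetric and again yields (b). In the remaining case $k\notin C$ and $-k\notin C$ I would prove (a) by first establishing $C=-C$: for $c\in C$, write $-c=c'+\lambda k$ with $c'\in C$, $\lambda\in\mathbb{R}$; then $c+c'=-\lambda k\in C$ since $C+C\subseteq C$. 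If $\lambda\neq0$, rescaling $-\lambda k\in C$ by the cone property yields $k\in C$ or $-k\in C$, contradicting the case assumption; thus $\lambda=0$ and $-c=c'\in C$. Hence $C=-C$, so the convex cone $C$ is a linear subspace (closed under addition, and, using $C=-C$, under all scalar multiples), and $Y=C+\mathbb{R}k$ with $k\notin C$ gives $\dim(Y/C)=1$, i.e.\ codimension $1$, which is (a).

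The routine parts are the subspace bookkeeping in (a) and the absorbing computation for the core, both of which are mechanical. The main obstacle—indeed the single genuine idea—is the case $k,-k\notin C$: one must notice that representing $-c$ as $c'+\lambda k$ and invoking $C+C\subseteq C$ pins the coefficient $\lambda$ to zero, which is precisely what upgrades the one-sided cone $C$ to a two-sided subspace. Once $C=-C$ is secured, the codimension-$1$ conclusion is immediate, and the three cases are visibly exhaustive, completing the equivalence.
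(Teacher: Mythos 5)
Your proof is correct, but there is nothing in the paper to compare it against: the paper does not prove this lemma at all, it imports it verbatim from Z\u{a}linescu \cite{Zal:86a} (``We get the following lemma and its corollary from \cite{Zal:86a}''), so your write-up supplies a self-contained argument for a quoted result. Checking it on its own merits: the trichotomy $k\in C$, $-k\in C$, $k,-k\notin C$ is exhaustive, and each branch is sound. In the first case, writing an arbitrary $y$ as $c+\lambda k$ and computing $k+ty=tc+(1+t\lambda)k$ does verify $k\in\operatorname*{core}C$, since $tc\in C$, $(1+t\lambda)k\in C$ for all $t\in[0,t_0]$ with $1+t_0\lambda>0$, and $C+C\subseteq C$ holds for convex cones; the case $-k\in C$ follows by symmetry because $\mathbb{R}k=\mathbb{R}(-k)$. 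Your third case contains the one genuine idea, and it works exactly as you say: from $-c=c'+\lambda k$ one gets $c+c'=-\lambda k\in C$, positive rescaling of $-\lambda k$ would put $k$ or $-k$ into $C$ unless $\lambda=0$, so $C=-C$; a convex cone symmetric about the origin is a linear subspace, and then $Y=C+\mathbb{R}k$ with $k\notin C$ means the image of $k$ spans $Y/C$, giving codimension $1$. The sufficiency computations (the quotient/no-intermediate-subspace argument for (a) and the core computation $z=\tfrac{1}{\delta}(k+\delta z)-\tfrac{1}{\delta}k$ for (b)) are likewise correct. One pedantic remark: under the paper's definition a cone could be empty, but the hypothesis $Y=C+\mathbb{R}k$ (or either of (a), (b)) forces $C\neq\emptyset$ and hence $0\in C$, so your subspace bookkeeping is safe.
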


\begin{corollary}\label{cor_finite_Zal}
Let $C$ be a convex cone in a linear space $Y$ and $k\in C\setminus\{0\}$.
Then $Y=C+\mathbb{R}k$ holds if and only if $k\in\operatorname*{core}C$.
\end{corollary}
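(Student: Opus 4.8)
The plan is to deduce the corollary directly from Lemma \ref{cone_finite_Zal}, using the extra hypothesis $k\in C$ to discard one of the lemma's two alternatives and to collapse the remaining one into exactly the condition $k\in\operatorname*{core}C$.

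For the ``if'' direction I would argue in one line: if $k\in\operatorname*{core}C$, then $k\in\{k,-k\}\cap\operatorname*{core}C\neq\emptyset$, so alternative (b) of Lemma \ref{cone_finite_Zal} holds and the lemma yields $Y=C+\mathbb{R}k$. For the ``only if'' direction, assume $Y=C+\mathbb{R}k$. Then Lemma \ref{cone_finite_Zal} forces alternative (a) or (b). Alternative (a) requires $k\notin C$ and is therefore excluded by the hypothesis $k\in C\setminus\{0\}$. Hence (b) holds, i.e. $\{k,-k\}\cap\operatorname*{core}C\neq\emptyset$: either $k\in\operatorname*{core}C$, in which case we are done, or $-k\in\operatorname*{core}C$, which must still be reduced to $k\in\operatorname*{core}C$.

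The only step requiring a genuine (if short) argument is this last case, $-k\in\operatorname*{core}C$ together with $k\in C$. I would invoke the algebraic line-segment principle for convex sets: if $x_0\in\operatorname*{core}C$ and $x_1\in C$, then $\lambda x_0+(1-\lambda)x_1\in\operatorname*{core}C$ for every $\lambda\in(0,1]$. Applying this with $x_0=-k$, $x_1=k$ and $\lambda=\tfrac12$ gives $0\in\operatorname*{core}C$. Since $C$ is a cone, $0\in\operatorname*{core}C$ forces $C=Y$: for an arbitrary $y\in Y$ the core property provides some $t>0$ with $ty\in C$, and the cone property then gives $y=\tfrac1t(ty)\in C$. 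Consequently $\operatorname*{core}C=Y\ni k$, so again $k\in\operatorname*{core}C$.

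The main obstacle, such as it is, is precisely this reduction: one must verify the line-segment principle in the purely algebraic setting of the core (no topology is available, so the argument has to be phrased in terms of the radial definition of $\operatorname*{core}$), and then observe that a convex cone whose core contains the origin must be the whole space. Everything else is straightforward bookkeeping against Lemma \ref{cone_finite_Zal}.
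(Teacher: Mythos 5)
Your proposal is correct and follows exactly the route the paper intends: the corollary is stated as an immediate consequence of Lemma \ref{cone_finite_Zal} (cited from Z\u{a}linescu, with no written proof), and your bookkeeping against the lemma's alternatives is accurate, including the one genuinely nontrivial point -- handling the case $-k\in\operatorname*{core}C$ with $k\in C$, where your algebraic line-segment argument giving $0\in\operatorname*{core}C$ and hence $C=Y\supseteq\{k\}=\operatorname*{core}C\cap\{k\}$ is sound (note that the corollary does not exclude $C=Y$, so concluding $k\in\operatorname*{core}C$ rather than a contradiction is the right move). As a minor remark, that case also closes in one line without the segment principle: $-k\in\operatorname*{core}C$ gives for any $y\in Y$ some $t>0$ with $-k+ty\in C$, and then $ty=(-k+ty)+k\in C+C\subseteq C$, so $C=Y$.
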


\begin{proposition}\label{k-core}
Assume $(H1_{A,k})$ and $k\in -\operatorname*{core}0^+A$.\\
Then $\varphi _{A, k}$ is finite-valued and $\operatorname*{lev}\nolimits_{\varphi_{A,k},<}(t)=tk+ \operatorname*{core}A \quad \forall\, t \in \mathbb{R}$.\\
Moreover, $\varphi _{0^+A, k}$ is finite-valued and
\begin{equation}
\varphi_{A,k}(y^0)-\varphi_{A,k}(y^1)\leq \varphi_{0^+A,k}(y^0-y^1) \;\;\forall y^0,y^1\in Y.\label{vor_sublin_dom}
\end{equation}
\end{proposition}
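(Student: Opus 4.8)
The plan is to derive the two finiteness statements from Corollary \ref{cor_finite_Zal} together with the defining inclusion $A+0^+A\subseteq A$ of the recession cone, to obtain the level-set formula from the equivalence in Theorem \ref{t251}(a), and to get the final inequality as a one-line consequence of Proposition \ref{p-varphi_rec}.

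First I would show that $\varphi_{A,k}$ is finite-valued. Since $0^+A$ is a convex cone and $-k\in\operatorname*{core}0^+A\setminus\{0\}$, Corollary \ref{cor_finite_Zal} (applied to $C=0^+A$ and the vector $-k$) yields $Y=0^+A+\mathbb{R}k$. Choosing any $a_0\in A$ and using $a_0+0^+A\subseteq A$, this gives $Y=a_0+Y=(a_0+0^+A)+\mathbb{R}k\subseteq A+\mathbb{R}k$, so $\operatorname*{dom}\varphi_{A,k}=\mathbb{R}k+A=Y$. To get properness through Theorem \ref{t251}(d) I would rule out lines: if $y_0+\mathbb{R}k\subseteq A$, then writing an arbitrary $z\in Y$ as $z=w+sk$ with $w\in0^+A$ gives $y_0+z=(y_0+sk)+w\in A$, forcing $A=Y$ and contradicting that $A$ is a proper subset. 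Hence $\varphi_{A,k}$ is proper, and Theorem \ref{t251}(f) makes it finite-valued.

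For the level sets I would verify condition (\ref{in_core}), i.e. $A-\mathbb{R}_{>}\cdot k\subseteq\operatorname*{core}A$, and then invoke its equivalence with (\ref{core_in_equal}) in Theorem \ref{t251}(a). The key step is the inclusion $a+\operatorname*{core}0^+A\subseteq\operatorname*{core}A$ for every $a\in A$: if $v\in\operatorname*{core}0^+A$, then for each direction $y\in Y$ there is $\delta>0$ with $v+ty\in0^+A$ for $t\in[0,\delta]$, whence $a+v+ty\in A$, so $a+v\in\operatorname*{core}A$. As $\operatorname*{core}0^+A$ is a cone, $-\lambda k\in\operatorname*{core}0^+A$ for every $\lambda>0$, and (\ref{in_core}) follows. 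For $\varphi_{0^+A,k}$ I would simply reapply the finiteness argument to $0^+A$ in place of $A$: Proposition \ref{p-varphi_rec} already guarantees $(H1_{0^+A,k})$, and because $0^+(0^+A)=0^+A$ for the convex cone $0^+A$, the hypothesis $k\in-\operatorname*{core}0^+(0^+A)$ holds, so $\varphi_{0^+A,k}$ is finite-valued by the same reasoning.

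Finally, the inequality (\ref{vor_sublin_dom}) drops out of Proposition \ref{p-varphi_rec}. Since both functions are now finite-valued, $A+\mathbb{R}k=0^+A+\mathbb{R}k=Y$, so the hypotheses of that proposition hold for all arguments; applying it with $y^1$ and $y^0-y^1$ in place of its two points gives $\varphi_{A,k}(y^0)=\varphi_{A,k}\big(y^1+(y^0-y^1)\big)\le\varphi_{A,k}(y^1)+\varphi_{0^+A,k}(y^0-y^1)$, and finiteness lets me rearrange this into (\ref{vor_sublin_dom}). I expect the only genuine obstacle to be the finiteness part -- specifically pinning down $Y=\mathbb{R}k+A$ and excluding lines parallel to $k$ -- whereas the level-set identity and the final inequality are short deductions from Theorem \ref{t251}(a) and Proposition \ref{p-varphi_rec}.
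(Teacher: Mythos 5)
Your proof is correct and takes essentially the same route as the paper's: Lemma \ref{cone_finite_Zal} (via Corollary \ref{cor_finite_Zal}) gives $Y=0^+A+\mathbb{R}k$, a line $y_0+\mathbb{R}k\subseteq A$ would force $A=Y$ so Theorem \ref{t251} yields finiteness, the inclusion $A+\operatorname*{core}0^+A\subseteq \operatorname*{core}A$ together with Theorem \ref{t251}(a) gives the level-set identity, and Proposition \ref{p-varphi_rec} gives inequality (\ref{vor_sublin_dom}). The only differences are that you make explicit what the paper leaves implicit: the verification that $a+\operatorname*{core}0^+A\subseteq\operatorname*{core}A$ (which the paper states with an apparent typo as $A+\operatorname*{core}0^+A\subseteq\operatorname*{core}0^+A$) and the identity $0^+(0^+A)=0^+A$ used to transfer the finiteness argument to $\varphi_{0^+A,k}$.
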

\begin{proof}
By Lemma \ref{cone_finite_Zal}, $Y=0^+A+\mathbb{R}k$, thus $Y=A+0^+A+\mathbb{R}k \subseteq A+\mathbb{R}k=\operatorname*{dom}\varphi _{A, k}$.\\
Suppose now that $\varphi _{A, k}$ is not finite-valued. Then there exists some $y\in Y$ with $y+\mathbb{R}k \subseteq A$, which implies $Y=0^+A+\mathbb{R}k=0^+A+\mathbb{R}k+y\subseteq 0^+A+A\subseteq A$, a contradiction.\\
Since $A+\operatorname*{core}0^+A\subseteq \operatorname*{core}0^+A$, the assertion related to the level sets follows from Theorem \ref{t251}(a).\\
Proposition \ref{p-varphi_rec} implies inequality (\ref{vor_sublin_dom}).
\end{proof}
\smallskip

\section{Representation of Binary Relations by Functions with Uniform Level Sets and Monotonicity}\label{sec-Monot}

Binary relations, especially partial orders, can structure a space or express preferences in decision making and  optimization. Thus the presentation of such relations by real-valued functions serves as a useful tool in proofs, e.g. in operator theory \cite{Kra64}, but also as a basis for scalarization methods in vector optimization \cite{Wei90} and for the development of risk measures in mathematical finance \cite{art99}.

If $C$ is a closed ordering cone in a topological vector space $Y$, then the corresponding order $\leq _{C}$ can be presented by $\varphi_{-C,k}$ with an arbitrary $k\in C\setminus\{0\}$ since, for all $y^1,y^2\in Y$,
\begin{displaymath}
y^1\leq _{C} y^2\iff \varphi_{-C,k}(y^1-y^2)\leq 0.
\end{displaymath}
We will show (see Corollary \ref{c251a}) that, in this case, $\varphi_{-C,k}$ is $C$-monotone, and thus we get for all $y^1,y^2\in \operatorname*{dom}\varphi_{-C,k}$:
\begin{equation}\label{mon-not-reverse}
y^1\leq _{C} y^2\implies \varphi_{-C,k}(y^1)\leq \varphi_{-C,k}(y^2).
\end{equation}

More generally, if a binary relation can be described by some proper closed subset $A$ of $Y$ with $0^+A\not= \{ 0\}$ as
$\mathcal{R}_A=\{(y^1,y^2)\in Y\times Y\mid y^2-y^1\in A\}$,  
then we have for each $k\in 0^+A\setminus \{ 0\}$ and all $y^1, y^2\in Y$:
\begin{displaymath}
y^1\mathcal{R}_A y^2\iff \varphi_{-A,k}(y^1-y^2)\leq 0.
\end{displaymath}
If this function $\varphi_{-A,k}$ is $A$-monotone, this implies for all $y^1,y^2\in \operatorname*{dom}\varphi_{-A,k}$:
\begin{displaymath}
y^1\mathcal{R}_A y^2 \implies \varphi_{-A,k}(y^1)\leq \varphi_{-A,k}(y^2).
\end{displaymath}

The reverse implication is not true since it is already not true for (\ref{mon-not-reverse}).
\begin{example}
Consider $Y=\mathbb{R}^2$, $C=\mathbb{R}^2_{+}$ and $k=(1,1)^T$. Then $\operatorname*{dom}\varphi_{-C,k}=Y$.
For $y^1=(-1,-1)^T$ and $y^2=(-2,0)^T$, we get
$\varphi_{-C,k}(y^1)=-1\leq 0=\varphi_{-C,k}(y^2)$, but $y^1\leq _{C} y^2$ does not hold since $y^2-y^1=(-1,1)^T\not\in C$.
\end{example}

But we get the following local presentation of $\mathcal{R}_A$. One has for all $y^1, y^2\in Y$:
\begin{displaymath}
y^1\mathcal{R}_A y^2\iff \varphi_{y^2-A,k}(y^1)\leq 0.
\end{displaymath}

Let us now characterize monotonicity of $\varphi_{A,k}$. We will make use of the following lemma.

\begin{lemma}\label{l-Bmon}
Let $Y$ be a linear space, $B\subseteq Y$ and $\varphi :Y \rightarrow \overline{\mathbb{R}}_{\nu }$.
\begin{itemize}
\item[(a)] If $\varphi$ is $B$-monotone or strictly $B$-monotone, then it is $A$-monotone or strictly $A$-monotone, respectively, for each subset $A$ of $B$.
\item[(b)] If $\varphi$ is strictly $B$-monotone, then it is $B$-monotone.
\item[(c)] Assume that $Y$ is a topological vector space, $B\subseteq\operatorname*{cl}\operatorname*{int}B$, $\operatorname*{dom}\varphi -\operatorname*{int}B\subseteq \operatorname*{dom}\varphi$ and
that $\varphi$ is proper and lower semicontinuous on $\operatorname*{dom}\varphi$.\\
Then $\varphi$ is $(\operatorname*{int}B)$-monotone if and only if it is $B$-monotone.
\end{itemize}
\end{lemma}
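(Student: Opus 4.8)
The plan is to settle parts (a) and (b) directly from the definitions in Definition \ref{d-mon} and to treat (c) as the substantive case, where the two genuinely topological hypotheses, namely $B\subseteq\operatorname*{cl}\operatorname*{int}B$ and lower semicontinuity on $\operatorname*{dom}\varphi$, have to be played off against one another. For (a), if $A\subseteq B$ and $y^1,y^2$ satisfy $y^2-y^1\in A$, then $y^2-y^1\in B$, so $B$-monotonicity gives $\varphi(y^1)\le\varphi(y^2)$ at once; the strict statement is identical using $A\setminus\{0\}\subseteq B\setminus\{0\}$. For (b), I would take $y^1,y^2\in\operatorname*{dom}\varphi$ with $y^2-y^1\in B$ and split into two cases: if $y^1=y^2$ then $\varphi(y^1)=\varphi(y^2)$ trivially, and if $y^2-y^1\in B\setminus\{0\}$ then strict $B$-monotonicity yields $\varphi(y^1)<\varphi(y^2)$; in both cases $\varphi(y^1)\le\varphi(y^2)$.

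For (c), the implication ``$B$-monotone $\Rightarrow$ $(\operatorname*{int}B)$-monotone'' is immediate from part (a) because $\operatorname*{int}B\subseteq B$. The work lies in the converse. Assuming $\varphi$ is $(\operatorname*{int}B)$-monotone, I would fix $y^1,y^2\in\operatorname*{dom}\varphi$ with $b:=y^2-y^1\in B$ and aim at $\varphi(y^1)\le\varphi(y^2)$. Invoking $B\subseteq\operatorname*{cl}\operatorname*{int}B$, I choose a net $(b_\alpha)$ in $\operatorname*{int}B$ with $b_\alpha\to b$ and set $w_\alpha:=y^2-b_\alpha$. The one-sided domain hypothesis $\operatorname*{dom}\varphi-\operatorname*{int}B\subseteq\operatorname*{dom}\varphi$ guarantees $w_\alpha\in\operatorname*{dom}\varphi$, and since $y^2-w_\alpha=b_\alpha\in\operatorname*{int}B$, interior monotonicity delivers $\varphi(w_\alpha)\le\varphi(y^2)$ for every $\alpha$. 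Because $w_\alpha\to y^2-b=y^1\in\operatorname*{dom}\varphi$ and $\varphi$ is lower semicontinuous (and proper, so all these values are real), lower semicontinuity gives $\varphi(y^1)\le\liminf_\alpha\varphi(w_\alpha)\le\varphi(y^2)$, which closes the argument.

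The step I expect to require the most care is the choice to perturb $y^2$ \emph{downward} by $b_\alpha$ rather than perturbing $y^1$ upward: this direction is forced, since the symmetric candidate $w_\alpha=y^1+b_\alpha$ would need $\operatorname*{dom}\varphi+\operatorname*{int}B\subseteq\operatorname*{dom}\varphi$, which is not assumed, and would moreover make the semicontinuity inequality point the wrong way (upper, not lower, semicontinuity would be needed). Two further points deserve attention: because $Y$ is only a topological vector space, the approximation of $b\in\operatorname*{cl}\operatorname*{int}B$ must proceed through nets rather than sequences, and properness together with lower semicontinuity on $\operatorname*{dom}\varphi$ is exactly what keeps $\varphi(y^1)$ controlled by the limiting values $\varphi(w_\alpha)$ without spurious behaviour at $\pm\infty$.
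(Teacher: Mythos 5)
Your proof is correct and follows essentially the same route as the paper: both hinge on approximating $y^1\in y^2-\operatorname*{cl}\operatorname*{int}B$ by points $y^3=y^2-b'$ with $b'\in\operatorname*{int}B$ (kept in $\operatorname*{dom}\varphi$ by the one-sided domain hypothesis), applying $(\operatorname*{int}B)$-monotonicity to compare with $\varphi(y^2)$, and invoking lower semicontinuity at $y^1$. The only difference is presentational -- the paper argues by contradiction via the neighborhood form of lower semicontinuity, whereas you argue directly with a net and a $\liminf$ inequality; your explicit remarks on why the perturbation must go downward from $y^2$ and why nets (not sequences) are needed in a general topological vector space are both accurate.
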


\begin{proof}
\begin{itemize}
\item[]
\item[(a)] and (b) follow from Definition \ref{d-mon}.
\item[(c)] Let $\varphi$ be $(\operatorname*{int}B)$-monotone. Consider $y^1,y^2 \in \operatorname*{dom}\varphi$ with $y^{2}-y^{1}\in B$ and assume $\varphi (y^{1})>\varphi (y^{2})$.
Since $\varphi$ is lower semicontinuous on $\operatorname*{dom}\varphi$, there exists some neighborhood U of $y^1$ such that $\varphi(y) > \varphi(y^2)\;\forall y\in U\cap \operatorname*{dom}\varphi$.
Because of $y^{1}\in y^{2}-B\subseteq y^{2}-\operatorname*{cl}\operatorname*{int}B=\operatorname*{cl}(y^{2}-\operatorname*{int}B)$ there exists some $y^3\in y^2-\operatorname*{int}B$ with $y^3\in U$. Thus $\varphi (y^{3})>\varphi (y^{2})$,
a contradiction to the $(\operatorname*{int}B)$-monotonicity of $\varphi$. Thus $\varphi$ is $B$-monotone. The assertion follows with (a). 
\end{itemize}
\end{proof}

Part (c) is due to \cite{Wei90}. The assumption $B\subseteq\operatorname*{cl}\operatorname*{int}B$ is fulfilled if $B$ is some convex set with nonempty interior.

\begin{theorem}
\label{t251M}
Assume $(H1_{A,k})$.
Then the following statements hold for sets $B\subseteq Y$:
\begin{itemize}
\item[(a)] $A-B\subseteq A \implies \varphi _{A,k}$ is B-monotone.
\item[(b)]
If $A-B\subseteq\mathbb{R}k+A$, then:\\
$\varphi _{A,k} $ is $B${-monotone} $\iff \,A-B\subseteq A$.
\item[(c)] If $\varphi _{A,k}$ is finite-valued on $F\subseteq Y$, then:\\
$A - (B\setminus \{0\}) \subseteq \operatorname{core} A \implies \varphi _{A,k} $ is strictly $B$-monotone on $F$.
\item[(d)] 
If $A-\mathbb{R}_{>} k\subseteq \operatorname*{core} \;A$ and $A-B\subseteq\mathbb{R}k+A$, then:\\
$\varphi _{A,k} $ is strictly $B${-monotone} $\implies \,A - (B\setminus \{0\}) \subseteq \operatorname{core} A$.
\item[(e)] 
If $A-\mathbb{R}_{>} k\subseteq \operatorname*{int} \;A$ and $A-B\subseteq\mathbb{R}k+A$, then:\\
$\varphi _{A,k} $ is strictly $B${-monotone} $\implies \,A - (B\setminus \{0\}) \subseteq \operatorname{int} A$.
\item[(f)] If $\varphi _{A,k}$ is proper and $A-\operatorname*{int}B\subseteq\mathbb{R}k+A$, then:\\
$\varphi _{A,k} $ is strictly $(\operatorname*{int}B)${-monotone} $\iff \varphi _{A,k} $ is $(\operatorname*{int}B)${-monotone}.\\
If additionally $B\subseteq \operatorname*{cl}\operatorname*{int}B$, then:\\
$\varphi _{A,k} $ is strictly $(\operatorname*{int}B)${-monotone} $\iff \varphi _{A,k} $ is $B${-monotone}.
\end{itemize}
\end{theorem}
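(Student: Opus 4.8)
The plan is to run everything off the level-set description of $\varphi_{A,k}$. The two workhorses are the identity $\operatorname*{lev}\nolimits_{\varphi_{A,k},\le}(t)=tk+A$ from (\ref{f-r252n}) and, for the strict statements, the inclusion (\ref{core_in_less}) together with its sharpenings (\ref{int_in_equal}) and (\ref{core_in_equal}) from Theorem \ref{t251}(a); the translation invariance (\ref{f-r255nn}) and the $\mathbb{R}k$-invariance of $\operatorname*{dom}\varphi_{A,k}=\mathbb{R}k+A$ in (\ref{dom_Ak}) handle part (f), and Lemma \ref{l-Bmon} supplies the passage between $B$- and $(\operatorname*{int}B)$-monotonicity. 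Throughout, a set inclusion of the form $A-B\subseteq A$ (or $\subseteq\operatorname*{core}A$, $\operatorname*{int}A$) will be converted into a monotonicity statement, and conversely by testing monotonicity on the pair $y^2=a\in A$, $y^1=a-b$.

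For (a), given $y^1,y^2\in\operatorname*{dom}\varphi_{A,k}$ with $y^2-y^1\in B$, I would fix any $t\in\mathbb{R}$ with $y^2\in tk+A$ and write $y^1-tk=(y^2-tk)-(y^2-y^1)\in A-B\subseteq A$; hence $y^1\in tk+A$, so $\varphi_{A,k}(y^1)\le t$, and taking the infimum over such $t$ via (\ref{f-r252n}) yields $\varphi_{A,k}(y^1)\le\varphi_{A,k}(y^2)$ (the case $\varphi_{A,k}(y^2)=-\infty$ being automatic). Part (b) is (a) plus its converse: assuming $B$-monotonicity and $A-B\subseteq\mathbb{R}k+A$, for $a\in A$, $b\in B$ the points $y^2=a$ and $y^1=a-b$ both lie in $\operatorname*{dom}\varphi_{A,k}$ (here $A-B\subseteq\mathbb{R}k+A$ is exactly what places $y^1$ in the domain), so $\varphi_{A,k}(a-b)\le\varphi_{A,k}(a)\le0$ gives $a-b\in\operatorname*{lev}\nolimits_{\varphi_{A,k},\le}(0)=A$. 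For (c), since $\varphi_{A,k}$ is finite on $F$, I would set $t=\varphi_{A,k}(y^2)\in\mathbb{R}$, so $y^2\in tk+A$ and $y^1-tk=(y^2-tk)-(y^2-y^1)\in A-(B\setminus\{0\})\subseteq\operatorname*{core}A$; then (\ref{core_in_less}) gives $y^1\in tk+\operatorname*{core}A\subseteq\operatorname*{lev}\nolimits_{\varphi_{A,k},<}(t)$, i.e. $\varphi_{A,k}(y^1)<t=\varphi_{A,k}(y^2)$.

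Parts (d) and (e) are the converses of (c) under the extra regularity of the sublevel sets. Assuming strict $B$-monotonicity, I again test on $y^2=a\in A$ and $y^1=a-b$ with $b\in B\setminus\{0\}$; the hypothesis $A-B\subseteq\mathbb{R}k+A$ puts both in $\operatorname*{dom}\varphi_{A,k}$, so $\varphi_{A,k}(a-b)<\varphi_{A,k}(a)\le0$, whence $a-b\in\operatorname*{lev}\nolimits_{\varphi_{A,k},<}(0)$. Now the standing hypothesis $A-\mathbb{R}_{>}k\subseteq\operatorname*{core}A$ is precisely (\ref{in_core}), which by Theorem \ref{t251}(a) is equivalent to (\ref{core_in_equal}) and gives $\operatorname*{lev}\nolimits_{\varphi_{A,k},<}(0)=\operatorname*{core}A$, so $a-b\in\operatorname*{core}A$; part (e) is identical with (\ref{vor-H2}) and (\ref{int_in_equal}) replacing (\ref{in_core}) and (\ref{core_in_equal}).

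The main work is (f). The forward implication strict $(\operatorname*{int}B)$-monotone $\Rightarrow$ $(\operatorname*{int}B)$-monotone is Lemma \ref{l-Bmon}(b). For the reverse, I would exploit that $\operatorname*{int}B$ is open and that translations by $k$ stay inside both $\operatorname*{dom}\varphi_{A,k}$ and $\operatorname*{int}B$: given $y^1,y^2\in\operatorname*{dom}\varphi_{A,k}$ with $y^2-y^1\in\operatorname*{int}B\setminus\{0\}$, continuity of $\epsilon\mapsto(y^2-y^1)-\epsilon k$ yields $\epsilon>0$ with $(y^2-y^1)-\epsilon k\in\operatorname*{int}B$; setting $y^3:=y^1+\epsilon k\in\operatorname*{dom}\varphi_{A,k}$ (by (\ref{dom_Ak})) one has $y^2-y^3\in\operatorname*{int}B$, so $(\operatorname*{int}B)$-monotonicity gives $\varphi_{A,k}(y^3)\le\varphi_{A,k}(y^2)$, while translation invariance (\ref{f-r255nn}) and properness give $\varphi_{A,k}(y^1)=\varphi_{A,k}(y^3)-\epsilon<\varphi_{A,k}(y^2)$. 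This is the delicate step, since it is exactly where properness (finiteness of $\varphi_{A,k}(y^1)$) and the strict gain $\epsilon>0$ from the $k$-shift are both indispensable. The second equivalence then follows by combining this with Lemma \ref{l-Bmon}(c): the lower semicontinuity of $\varphi_{A,k}$ from Theorem \ref{t251}, properness, $B\subseteq\operatorname*{cl}\operatorname*{int}B$, and the identity $\operatorname*{dom}\varphi_{A,k}-\operatorname*{int}B=\mathbb{R}k+(A-\operatorname*{int}B)\subseteq\mathbb{R}k+A=\operatorname*{dom}\varphi_{A,k}$ (which is where $A-\operatorname*{int}B\subseteq\mathbb{R}k+A$ enters) verify the hypotheses of that lemma, giving $(\operatorname*{int}B)$-monotone $\iff B$-monotone.
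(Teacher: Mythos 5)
Your proof is correct, and parts (a)--(e) follow the paper's own argument essentially verbatim: the only cosmetic difference is in (a), where you take the infimum over all admissible $t$ while the paper works with a sequence $t_n\to\varphi_{A,k}(y^2)$; your converse tests in (b), (d), (e) on the pair $y^2=a$, $y^1=a-b$ and your use of (\ref{core_in_less}), (\ref{core_in_equal}) and (\ref{int_in_equal}) at $t=0$ are exactly the paper's steps. The genuine divergence is in (f). The paper obtains the implication ``$(\operatorname*{int}B)$-monotone $\implies$ strictly $(\operatorname*{int}B)$-monotone'' by chaining the earlier parts: $(\operatorname*{int}B)$-monotonicity yields $A-\operatorname*{int}B\subseteq A$ via (b) (this is where the hypothesis $A-\operatorname*{int}B\subseteq\mathbb{R}k+A$ enters), openness of $\operatorname*{int}B$ upgrades this inclusion to $A-\operatorname*{int}B\subseteq\operatorname*{int}A$, and then (c) with $F=\operatorname*{dom}\varphi_{A,k}$ (using properness and $\operatorname*{int}A\subseteq\operatorname*{core}A$) delivers strictness. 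You instead argue directly with an $\epsilon k$-shift: choose $\epsilon>0$ with $(y^2-y^1)-\epsilon k\in\operatorname*{int}B$, apply plain monotonicity to $y^3=y^1+\epsilon k$, and convert the shift into a strict gap via translation invariance (\ref{f-r255nn}) and properness. Both arguments are valid, and you correctly flag the one point where properness is indispensable (so that $\varphi_{A,k}(y^2)\in\mathbb{R}$ and subtracting $\epsilon$ produces a strict inequality). Your route is self-contained and in fact never uses $A-\operatorname*{int}B\subseteq\mathbb{R}k+A$ for the first equivalence -- it is only needed for the second one, to verify $\operatorname*{dom}\varphi_{A,k}-\operatorname*{int}B\subseteq\operatorname*{dom}\varphi_{A,k}$ in Lemma \ref{l-Bmon}(c), which you check correctly -- so you prove a marginally stronger statement; what the paper's route buys in exchange is the explicit intermediate geometric fact $A-\operatorname*{int}B\subseteq\operatorname*{int}A$, which ties (f) back to the set-inclusion characterizations running through (b)--(e).
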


\begin{proof}
\begin{itemize}
\item[]
\item[(a)] Suppose $A - B\subseteq A$. Take $y^1,y^2 \in \operatorname*{dom}\varphi_{A,k}$ with $y^{2}-y^{1}\in B$. There exists a sequence
$(t_n)_{n\in\mathbb{N}}$ which converges to $\varphi_{A,k}(y^2)$ such that $y^2\in t_nk+A \;\forall n\in\mathbb{N}$.
$\Rightarrow y^{1}\in y^{2}-B\subseteq t_nk + (A - B)\subseteq t_nk+A \;\forall n\in\mathbb{N}$. $\Rightarrow \varphi_{A,k} (y^{1})\le t_n \;\forall n\in\mathbb{N}$.
Thus $\varphi_{A,k} (y^{1})\le \varphi_{A,k}(y^2)$. Hence $\varphi_{A,k} $ is $B$-monotone.
\item[(b)] Assume now \,$A-B\subseteq{\mathbb{R}}k+A=\operatorname*{dom}\varphi _{A,k}$ and that $\varphi_{A,k} $ is $B$-monotone.
Consider $a\in A$ and $b\in B$. From
(\ref{f-r252n}) we get that $\varphi_{A,k} (a)\le 0$. Since
$a-(a-b)=b\in B$ and $a-b\in \operatorname*{dom}\varphi _{A,k}$, we obtain that $\varphi_{A,k} (a-b)\le \varphi_{A,k}
(a)\le 0$, thus $a-b\in A$ by (\ref{f-r252n}). Consequently, $A - B\subseteq A$. The assertion follows because of (a).
\item[(c)] Suppose that $\varphi _{A,k}$ is finite-valued on $F$ and $A - (B\setminus \{0\}) \subseteq \operatorname{core} A$. Take $y^1,y^2 \in F$ with $y^{2}-y^{1}\in B\setminus \{0\}$. $t:=\varphi_{A,k}(y^2)\in \mathbb{R}$. $\Rightarrow y^{2}\in tk+A$. Then $y^{1}\in y^{2}-(B\setminus \{0\}) \subseteq tk + (A - (B\setminus \{0\}))\subseteq tk+\operatorname{core} A$. 
$\Rightarrow \varphi_{A,k}(y^{1})<t=\varphi_{A,k}(y^{2})$ because of (\ref{core_in_less}). Consequently, $\varphi_{A,k}$ is strictly $B$-monotone on $F$.
\item[(e)] Assume now that $A-\mathbb{R}_{>} k\subseteq \operatorname*{int} \;A$, $A-B\subseteq {\mathbb{R}}k+A=\operatorname*{dom}\varphi _{A,k}$ and that $\varphi_{A,k} $ is strictly $B$-monotone.
Take $a\in A$ and $b\in B\setminus \{0\}$. From (\ref{f-r252n}) we get that $\varphi_{A,k} (a) \le 0$. Since
$a-(a-b)=b\in B\setminus \{0\}$ and $a-b\in \operatorname*{dom}\varphi _{A,k}$, we obtain that $\varphi_{A,k} (a-b) < \varphi_{A,k}
(a)\le 0$, thus $a-b\in \operatorname*{int} A$ by (\ref{int_in_equal}). Consequently, $A-(B\setminus \{0\})\subseteq \operatorname*{int}A$.
\item[(d)] can be proved in the same way as (e) with $\operatorname{core}$ instead of $\operatorname{int}$.
\item[(f)] $\varphi _{A,k} $ is $(\operatorname*{int}B)${-monotone} $\Rightarrow A-\operatorname*{int}B\subseteq A \Rightarrow A-\operatorname*{int}B\subseteq \operatorname*{int}A
\Rightarrow \varphi _{A,k} $ strictly $(\operatorname*{int}B)${-monotone}. The assertion follows by Lemma \ref{l-Bmon}.
\end{itemize}
\end{proof}

Note that the condition $A-B\subseteq A$ is fulfilled if $B\subseteq -0^+A$.

In parts of the previous theorems, assertions can be proved which make assumptions on the boundary of $A$ only instead of on the entire set $A$.
Let us point out that such assumptions are not necessarily weaker than conditions for the complete set $A$.

\begin{lemma}\label{bdA-equ-A}
Assume $(H1_{A,k})$, $A=\operatorname*{bd}A -\mathbb{R}_{+} k$ and $B\subset Y$. 
Then
\begin{itemize}
\item[(a)] $A+A\subseteq A\iff \,\operatorname{bd}A+\operatorname{bd}A\subseteq A$.
\item[(b)] $A-B\subseteq A\iff \,\operatorname{bd}A-B\subseteq A$.
\item[(c)] $A-(B\setminus \{0\})\subseteq \operatorname{int}A\iff \,\operatorname{bd}A-(B\setminus \{0\})\subseteq \operatorname{int} A$.
\item[(d)] $A-B\subseteq {\mathbb{R}}k+A\iff \,\operatorname{bd}A-B\subseteq {\mathbb{R}}k+A$.
\end{itemize}
\end{lemma}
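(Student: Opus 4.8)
The key structural fact I would exploit throughout is the hypothesis $A=\operatorname{bd}A-\mathbb{R}_{+}k$, which says every point of $A$ is obtained from a boundary point by shifting in the $-k$ direction, i.e. $A=\{b-sk\mid b\in\operatorname{bd}A,\ s\ge 0\}$. In each of the four equivalences the implication $\Rightarrow$ is trivial because $\operatorname{bd}A\subseteq A$, so the whole content is in the reverse implications, where one must upgrade a statement known only for boundary points to a statement about all of $A$. The plan is to handle all four parts by the same device: write an arbitrary $a\in A$ as $a=b-sk$ with $b\in\operatorname{bd}A$ and $s\ge 0$, apply the boundary hypothesis to $b$, and then absorb the extra $-sk$ using the recession-cone property $k\in -0^+A$ from $(H1_{A,k})$, i.e. $A-\mathbb{R}_{+}k\subseteq A$ (equivalently $A=A-\mathbb{R}_{+}k$, as noted after Proposition~\ref{prop-vor-theo}).

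\textbf{Parts (a) and (b).} For (b), suppose $\operatorname{bd}A-B\subseteq A$ and take $a\in A$, $c\in B$. Writing $a=b-sk$ with $b\in\operatorname{bd}A$, $s\ge 0$, I get $a-c=(b-c)-sk$; now $b-c\in\operatorname{bd}A-B\subseteq A$ by hypothesis, and subtracting the further $sk$ keeps us in $A$ since $k\in -0^+A$. Hence $a-c\in A$, giving $A-B\subseteq A$. Part (a) is the same argument applied twice: assuming $\operatorname{bd}A+\operatorname{bd}A\subseteq A$, write both summands $a^i=b^i-s_ik$ with $b^i\in\operatorname{bd}A$; then $a^1+a^2=(b^1+b^2)-(s_1+s_2)k$, and $b^1+b^2\in A$ by hypothesis while the remaining $-(s_1+s_2)k$ is again absorbed by the recession cone.

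\textbf{Parts (c) and (d).} Part (d) goes exactly like (b), using that $\mathbb{R}k+A$ is also invariant under subtracting $\mathbb{R}_{+}k$ (indeed $\mathbb{R}k+A-\mathbb{R}_{+}k=\mathbb{R}k+A$), so the extra $-sk$ causes no trouble. Part (c) is the one I expect to be the main obstacle, because the target set is $\operatorname{int}A$ rather than $A$, and the absorption step now requires that subtracting $\mathbb{R}_{+}k$ keep us inside the \emph{interior}. Here I would invoke the observation recorded right after Proposition~\ref{prop-vor-theo} that $k\in -0^+A$ implies $\operatorname{int}A=\operatorname{int}A-\mathbb{R}_{+}k$. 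Then, assuming $\operatorname{bd}A-(B\setminus\{0\})\subseteq\operatorname{int}A$ and taking $a=b-sk\in A$ with $b\in\operatorname{bd}A$ and $c\in B\setminus\{0\}$, I write $a-c=(b-c)-sk$ with $b-c\in\operatorname{int}A$, and the invariance of $\operatorname{int}A$ under $-\mathbb{R}_{+}k$ yields $a-c\in\operatorname{int}A$. The only point demanding care is that in (c) one removes the zero vector from $B$: since $b-c$ uses $c\ne 0$ whereas the shift $-sk$ is separate, the membership $b-c\in\operatorname{bd}A-(B\setminus\{0\})$ is legitimate and no $c=0$ case intervenes, so the argument closes cleanly.
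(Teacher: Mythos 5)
Your proposal is correct and follows essentially the same route as the paper: the paper's proof is exactly your argument written in set-algebra form, e.g. $A-B=(\operatorname*{bd}A-\mathbb{R}_{+}k)-B=(\operatorname*{bd}A-B)-\mathbb{R}_{+}k\subseteq A-\mathbb{R}_{+}k\subseteq A$, with the same absorption of $-\mathbb{R}_{+}k$ into $A$, $\operatorname*{int}A$ and $\mathbb{R}k+A$ respectively, and with the forward implications likewise treated as immediate from $\operatorname*{bd}A\subseteq A$. Your pointwise decomposition $a=b-sk$ and your explicit appeal to $\operatorname*{int}A=\operatorname*{int}A-\mathbb{R}_{+}k$ for part (c) are just an elementwise rendering of the identical idea.
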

\begin{proof}
\begin{itemize}
\item[]
\item[(a)] $\operatorname{bd}A+\operatorname{bd}A\subseteq A \Rightarrow A+A=(\operatorname*{bd}A -\mathbb{R}_{+} k)+(\operatorname*{bd}A -\mathbb{R}_{+} k)\subseteq
A -\mathbb{R}_{+} k\subseteq A$.
\item[(b)] $\operatorname{bd}A-B\subseteq A$ implies $A-B=(\operatorname*{bd}A -\mathbb{R}_{+} k)-B=(\operatorname*{bd}A - B)-\mathbb{R}_{+} k\subseteq A-\mathbb{R}_{+} k\subseteq A$.
\item[(c)] $\operatorname{bd}A-(B\setminus \{0\})\subseteq \operatorname{int}A$ implies $A-(B\setminus \{0\})=(\operatorname*{bd}A -\mathbb{R}_{+} k)-(B\setminus \{0\})=(\operatorname*{bd}A - (B\setminus \{0\}))-\mathbb{R}_{+} k\subseteq \operatorname{int} A-\mathbb{R}_{+} k\subseteq \operatorname{int} A$.
\item[(d)] $\operatorname{bd}A-B\subseteq {\mathbb{R}}k+A$ implies $A-B=(\operatorname*{bd}A -\mathbb{R}_{+} k)-B=(\operatorname*{bd}A - B)-\mathbb{R}_{+} k\subseteq {\mathbb{R}}k+A$.
\end{itemize}
\end{proof}

Let us summarize the results of Theorem \ref{t251M} for finite-valued functions.

\begin{corollary}\label{t251M-finite}
Assume $(H1_{A,k})$, that $\varphi _{A,k}$ is finite-valued and $B\subseteq Y$.
\begin{itemize}
\item[(a)] $\varphi _{A,k}$ is B-monotone $\iff A-B\subseteq A$.
\item[(b)] $A - (B\setminus \{0\}) \subseteq \operatorname{core} A \implies \varphi _{A,k} $ is strictly $B$-monotone.
\item[(c)] If $A-\mathbb{R}_{>} k\subseteq \operatorname*{core} \;A$, then:\\
$\varphi _{A,k} $ is strictly $B${-monotone} $\iff \,A - (B\setminus \{0\}) \subseteq \operatorname{core} A$.
\item[(d)] If $A-\mathbb{R}_{>} k\subseteq \operatorname*{int} \;A$, then:\\
$\varphi _{A,k} $ is strictly $B${-monotone} $\iff \,A - (B\setminus \{0\}) \subseteq \operatorname{int} A$.
\item[(e)] $\varphi _{A,k} $ is strictly $(\operatorname*{int}B)${-monotone} $\iff \varphi _{A,k} $ is $(\operatorname*{int}B)${-monotone}.
\item[(f)] If $B\subseteq\operatorname*{cl}\operatorname*{int}B$, then:\\
$\varphi _{A,k} $ is strictly $(\operatorname*{int}B)${-monotone} $\iff \varphi _{A,k} $ is $B${-monotone}.
\end{itemize}
\end{corollary}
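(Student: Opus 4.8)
The plan is to derive every assertion from Theorem \ref{t251M} by observing that finite-valuedness makes all of its auxiliary inclusion hypotheses vacuous. First I would record the key reduction: by Theorem \ref{t251}(f), $\varphi_{A,k}$ is finite-valued exactly when it is proper and $Y=\mathbb{R}k+A$, so under the present hypotheses $\operatorname*{dom}\varphi_{A,k}=\mathbb{R}k+A=Y$. Consequently every side condition of the form $A-B\subseteq\mathbb{R}k+A$ or $A-\operatorname*{int}B\subseteq\mathbb{R}k+A$ appearing in Theorem \ref{t251M} reduces to a containment in $Y$ and is therefore automatically satisfied; likewise ``finite-valued on $F$'' holds with $F=Y$, and properness holds. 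This is the whole engine of the argument, so the remaining work is bookkeeping.

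Next I would match each part to the corresponding statement(s) of Theorem \ref{t251M}. For (a), the implication ``$A-B\subseteq A\Rightarrow\varphi_{A,k}$ is $B$-monotone'' is Theorem \ref{t251M}(a), and the converse is Theorem \ref{t251M}(b), whose hypothesis $A-B\subseteq\mathbb{R}k+A$ now holds automatically. Part (b) is exactly Theorem \ref{t251M}(c) with $F=Y$. For (c), the direction ``$\Leftarrow$'' is again Theorem \ref{t251M}(c), while ``$\Rightarrow$'' is Theorem \ref{t251M}(d), using the hypothesis $A-\mathbb{R}_{>}k\subseteq\operatorname*{core}A$ together with its other inclusion hypothesis, now free of charge. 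Part (e) is the first assertion of Theorem \ref{t251M}(f) (invoking properness), and part (f) is its second assertion, where the extra hypothesis $B\subseteq\operatorname*{cl}\operatorname*{int}B$ is precisely the one assumed here.

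The one point needing a small extra argument is part (d). Its ``$\Rightarrow$'' direction is Theorem \ref{t251M}(e), with the inclusion hypothesis $A-B\subseteq\mathbb{R}k+A$ automatic. For ``$\Leftarrow$'' I would not use Theorem \ref{t251M}(e) but instead deduce it from part (b) of this corollary together with the elementary inclusion $\operatorname*{int}A\subseteq\operatorname*{core}A$: if $A-(B\setminus\{0\})\subseteq\operatorname*{int}A$, then a fortiori $A-(B\setminus\{0\})\subseteq\operatorname*{core}A$, whence $\varphi_{A,k}$ is strictly $B$-monotone by (b). I expect no genuine obstacle; the only thing to watch is keeping the $\operatorname*{core}$ and $\operatorname*{int}$ versions straight, and confirming in each case that the relevant auxiliary hypothesis of Theorem \ref{t251M} is truly rendered trivial by $\mathbb{R}k+A=Y$ rather than a condition that still needs verifying.
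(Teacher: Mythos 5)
Your proposal is correct and is precisely the argument the paper intends: the corollary is stated as a summary of Theorem \ref{t251M}, with finite-valuedness (via Theorem \ref{t251}(f)) forcing $\operatorname*{dom}\varphi_{A,k}=\mathbb{R}k+A=Y$, which trivializes every auxiliary inclusion and properness hypothesis. Your extra step for the ``$\Leftarrow$'' direction of (d), passing through $\operatorname*{int}A\subseteq\operatorname*{core}A$ and part (b), is exactly the right way to fill the one gap Theorem \ref{t251M} leaves open.
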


Theorem \ref{t251M} contains some interesting special cases.

\begin{corollary}\label{t251M-recc}
Assume $(H1_{A,k})$ and $C\subseteq -0^+A$.
\begin{itemize}
\item[(a)] $\varphi _{A,k}$ is $C$-monotone.
\item[(b)] If  $\varphi _{A,k}$ is proper, then $\varphi _{A,k}$ is strictly $(\operatorname{int}C)$-monotone.
\end{itemize}
\end{corollary}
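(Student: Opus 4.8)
The plan is to derive both parts directly from Theorem \ref{t251M} together with the remark following it, which records that $B\subseteq -0^+A$ forces $A-B\subseteq A$.

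For part (a), I would first observe that $C\subseteq -0^+A$ yields $A-C\subseteq A$: for any $v\in C\subseteq -0^+A$ we have $-v\in 0^+A$, hence $a-v\in A$ for every $a\in A$, so $A-v\subseteq A$ and therefore $A-C\subseteq A$. Feeding $B:=C$ into Theorem \ref{t251M}(a), the implication $A-C\subseteq A\Rightarrow \varphi_{A,k}$ is $C$-monotone gives the claim immediately.

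For part (b), the idea is to promote $(\operatorname{int}C)$-monotonicity to strict $(\operatorname{int}C)$-monotonicity by means of the equivalence in Theorem \ref{t251M}(f). First, part (a) shows that $\varphi_{A,k}$ is $C$-monotone, and since $\operatorname{int}C\subseteq C$, Lemma \ref{l-Bmon}(a) gives that $\varphi_{A,k}$ is $(\operatorname{int}C)$-monotone. Next I would check the hypothesis of Theorem \ref{t251M}(f) with $B:=C$: because $\operatorname{int}C\subseteq C\subseteq -0^+A$, the computation in part (a) yields $A-\operatorname{int}C\subseteq A\subseteq \mathbb{R}k+A$, which is exactly the required condition $A-\operatorname{int}B\subseteq\mathbb{R}k+A$. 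Since $\varphi_{A,k}$ is assumed proper, Theorem \ref{t251M}(f) then asserts that $\varphi_{A,k}$ is strictly $(\operatorname{int}C)$-monotone if and only if it is $(\operatorname{int}C)$-monotone; as the latter has just been established, strict $(\operatorname{int}C)$-monotonicity follows.

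There is no serious obstacle here: the corollary is essentially a repackaging of Theorem \ref{t251M}. The only point demanding care is to route part (b) through the properness hypothesis via Theorem \ref{t251M}(f) rather than through parts (c)--(e), which would require the stronger assumptions $A-\mathbb{R}_{>}k\subseteq\operatorname{core}A$ or $A-\mathbb{R}_{>}k\subseteq\operatorname{int}A$ that are not available here, and to verify, as above, that the mild condition $A-\operatorname{int}C\subseteq\mathbb{R}k+A$ indeed holds so that (f) applies. The case $\operatorname{int}C=\emptyset$ needs no separate treatment, being vacuous.
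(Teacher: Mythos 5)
Your proof is correct and takes essentially the same approach the paper intends: the corollary is stated there without its own proof, as an immediate consequence of Theorem \ref{t251M} together with the preceding remark that $B\subseteq -0^+A$ implies $A-B\subseteq A$, which is exactly the reduction you carry out for part (a). Your handling of part (b) -- passing from $C$-monotonicity to $(\operatorname*{int}C)$-monotonicity via Lemma \ref{l-Bmon}(a) and then invoking Theorem \ref{t251M}(f) with $B:=C$ after verifying $A-\operatorname*{int}C\subseteq A\subseteq \mathbb{R}k+A$ -- is precisely the route the properness hypothesis in (f) is designed for, and your observation that the case $\operatorname*{int}C=\emptyset$ is vacuous is a correct (if unneeded) precaution.
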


Thus we get by Proposition \ref{k-core} the following statement which is of basic importance in vector optimization.
\begin{corollary}
Assume $(H1_{A,k})$, that $C\subset Y$ is a non-trivial pointed convex cone in $Y$, $k\in\operatorname*{core}C$ and $C\subseteq -0^+A$.\\
Then $\varphi _{A,k}$ is finite-valued, $C$-monotone and strictly $(\operatorname{int}C)$-monotone.
\end{corollary}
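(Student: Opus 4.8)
The plan is to reduce everything to the two results that immediately precede this corollary, namely Proposition \ref{k-core} and Corollary \ref{t251M-recc}. The single non-automatic step is to verify that the hypotheses $k\in\operatorname*{core}C$ and $C\subseteq -0^+A$ force $k\in -\operatorname*{core}0^+A$, so that Proposition \ref{k-core} becomes applicable; once this is established, the three assertions drop out with no further work.

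To obtain $k\in -\operatorname*{core}0^+A$, I would first record that the algebraic interior is monotone under inclusion: if $S_1\subseteq S_2$ and $x\in\operatorname*{core}S_1$, then for every direction $v\in Y$ there is some $\varepsilon>0$ with $x+tv\in S_1\subseteq S_2$ for all $t\in[0,\varepsilon)$, whence $x\in\operatorname*{core}S_2$. Applying this to the inclusion $-C\subseteq 0^+A$, which is merely a rewriting of $C\subseteq -0^+A$, together with the identity $\operatorname*{core}(-C)=-\operatorname*{core}C$, I would conclude
\begin{displaymath}
-k\in -\operatorname*{core}C=\operatorname*{core}(-C)\subseteq\operatorname*{core}0^+A,
\end{displaymath}
that is, $k\in -\operatorname*{core}0^+A$.

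With $(H1_{A,k})$ and $k\in -\operatorname*{core}0^+A$ in hand, Proposition \ref{k-core} gives that $\varphi_{A,k}$ is finite-valued; in particular it is proper, since a finite-valued function has domain $Y\neq\emptyset$ and is real-valued there. The monotonicity assertions then follow directly from Corollary \ref{t251M-recc}: part (a) yields that $\varphi_{A,k}$ is $C$-monotone, and part (b), whose standing hypothesis that $\varphi_{A,k}$ be proper is now verified, yields that $\varphi_{A,k}$ is strictly $(\operatorname*{int}C)$-monotone. The pointedness of $C$ plays no role in the argument; it is recorded only because it is the natural setting in which $\varphi_{A,k}$ scalarizes the partial order induced by $C$. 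I expect the only point requiring care to be the passage $k\in\operatorname*{core}C\Rightarrow k\in -\operatorname*{core}0^+A$, and more precisely the monotonicity of the core operation under inclusion, which is elementary but is exactly what converts the containment $C\subseteq -0^+A$ into a statement about interiors.
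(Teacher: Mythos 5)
Your proposal is correct and takes essentially the same route as the paper, which derives this corollary precisely by combining Proposition \ref{k-core} with Corollary \ref{t251M-recc}, leaving implicit the step $k\in\operatorname*{core}C,\ C\subseteq -0^+A\implies k\in-\operatorname*{core}0^+A$ that you rightly identify as the only nontrivial verification and settle via monotonicity of the core under inclusion. Your side remarks (finite-valued implies proper, so part (b) of Corollary \ref{t251M-recc} applies, and pointedness of $C$ is not used in the argument) are also accurate.
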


Furthermore, Theorem \ref{t251M} implies:

\begin{corollary}\label{-A-mon}
Assume $(H1_{A,k})$, $\;A+A\subseteq A$ and $C\subseteq -A$.
Then $\varphi_{A,k}$ is subadditive and $C$-monotone. If $\varphi_{A,k}$ is proper, then it is strictly $(\operatorname*{int} \;C)$-monotone.
\end{corollary}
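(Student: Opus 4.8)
The plan is to derive all three conclusions from the earlier results, the entire substance being the single set inclusion $A - C \subseteq A$, from which both monotonicity assertions follow, while subadditivity is immediate. For subadditivity, note that $A + A \subseteq A$ is assumed, so Theorem \ref{t251}(i) directly gives that $\varphi_{A,k}$ is subadditive.

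Next I would establish $A - C \subseteq A$. The hypothesis $C \subseteq -A$ says precisely that $-c \in A$ for every $c \in C$. Hence, for arbitrary $a \in A$ and $c \in C$, one has $a - c = a + (-c) \in A + A \subseteq A$, where the last inclusion is the standing assumption $A + A \subseteq A$. This proves $A - C \subseteq A$, and Theorem \ref{t251M}(a) then yields that $\varphi_{A,k}$ is $C$-monotone.

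For the final assertion, assume $\varphi_{A,k}$ is proper. Since $\operatorname*{int} C \subseteq C$, the inclusion just proved gives $A - \operatorname*{int} C \subseteq A - C \subseteq A \subseteq \mathbb{R}k + A$, so the hypothesis $A - \operatorname*{int} C \subseteq \mathbb{R}k + A$ of Theorem \ref{t251M}(f) is satisfied. Moreover, $\varphi_{A,k}$ is $(\operatorname*{int} C)$-monotone by Lemma \ref{l-Bmon}(a) applied to the subset $\operatorname*{int} C \subseteq C$, since $\varphi_{A,k}$ has already been shown to be $C$-monotone. Theorem \ref{t251M}(f) then upgrades $(\operatorname*{int} C)$-monotonicity to strict $(\operatorname*{int} C)$-monotonicity, which completes the proof.

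There is essentially no hard step here; the one point requiring care is that $C$ is not assumed to be a cone, so one cannot expect $C \subseteq -0^+A$ and therefore cannot route the strict-monotonicity conclusion through Corollary \ref{t251M-recc}(b). Instead one must invoke Theorem \ref{t251M}(f), checking that its weaker hypothesis $A - \operatorname*{int} C \subseteq \mathbb{R}k + A$ holds and recalling that $C$-monotonicity restricts to $(\operatorname*{int} C)$-monotonicity. Making this choice of tool correctly is the only subtlety.
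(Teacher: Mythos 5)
Your proposal is correct and follows essentially the paper's intended route: the corollary is stated there as a direct consequence of Theorem \ref{t251}(i) and Theorem \ref{t251M}, and your chain $A-C\subseteq A+A\subseteq A$ combined with Theorem \ref{t251M}(a), Lemma \ref{l-Bmon}(a) and Theorem \ref{t251M}(f) is exactly that derivation. Your side remark is also apt: since $C$ need not be contained in $-0^+A$ (as $A$ is not assumed to be a cone), Corollary \ref{t251M-recc}(b) is indeed unavailable and (f) is the right tool.
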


The assumptions of Corollary \ref{-A-mon} do not imply that $A$ is a cone or a shifted cone, even if $A$ is convex and $\varphi_{A,k}$ is proper.
\begin{example}
In $Y=\mathbb{R}^{2}$, consider $A:=\{(y_1,y_2)^T\in\mathbb{R}^{2}\mid y_{2}\ge \frac{1}{y_{1}}, y_{1}>0\}$ and $k:=(-1,0)^T$. 
Then $k\in-0^+A$, and $A$ is a closed convex proper subset of $Y$ for which $A+A\subseteq A$ holds and which does not contain lines parallel to $k$.
\end{example}
\smallskip

\section{Continuous Functions with Uniform Level Sets and Lipschitz Continuity}\label{sec-basicprop-cont}

We will now investigate continuous functionals $\varphi _{A,k}$ more in detail. 
Theorem \ref{t251} contains conditions which are sufficient for the continuity of $\varphi _{A,k}$, and we have discussed these conditions in section \ref{sec-basicprop}.
If $\varphi _{A,k}$ is continuous on $\operatorname*{dom}\varphi _{A,k}$ and $\operatorname*{dom}\varphi _{A,k}$ is open, then the level sets $\operatorname*{lev}_{\varphi_{A,k},=}(t)$ are given by the shifted boundary of $A$. The next theorem points out this structural dependence and summarizes additional properties of continuous functionals $\varphi _{A,k}$ with open domains.

\begin{theorem}\label{prop-funcII}
Assume
\begin{description}
\item[$(H2_{A,k})$] $Y$ is a topological vector space, $A$ is a closed proper subset of $\;Y$ and $k \in Y \setminus
\{0\}$ such that
\begin{equation}
A-\mathbb{R}_{>} k\subseteq \operatorname*{int} \;A. \label{r250a}
\end{equation}

\end{description}

Then 
\begin{equation}
A=\operatorname*{cl}(\operatorname*{int}A),\label{rem251}
\end{equation}
\begin{equation}
\operatorname*{core}A=\operatorname*{int}A,\label{H2_core_int}
\end{equation}
$\varphi _{A,k}$ is continuous on $\operatorname*{dom}\varphi _{A,k}$,
\begin{equation}
\operatorname*{dom}\varphi _{A,k} ={\mathbb{R}}k+A={\mathbb{R}}k+\operatorname*{int}A\not=  \emptyset \mbox{ is an open set},\label{f-domint}
\end{equation}  
\begin{eqnarray}
\operatorname*{lev}\nolimits _{\varphi_{A,k},\le}(t) & = & t k+A\quad \forall\, \ t \in {\mathbb{R}},\label{f-r252n-wdhl}\\
\operatorname*{lev}\nolimits _{\varphi_{A,k},<}(t) &= & t k+\mathop{\rm int}A\quad\forall\, t \in {\mathbb{R},} \label{f-r254-1nn} \\
\operatorname*{lev}\nolimits _{\varphi_{A,k},=}(t) & = & t k+\mathop{\rm bd}A\quad \forall\, t \in {\mathbb{R}}. \label{f-r254-2nn}
\end{eqnarray} 
Moreover,
\begin{itemize}
\item[(a)] $\varphi _{A,k}(y)=-\infty \iff y+\mathbb{R}k\subseteq \operatorname*{int} \;A$.
\item[(b)] $\mathop{\rm bd}A+\mathbb{R}k$ is the subset of $\;Y$ on which $\varphi _{A,k}$ is finite-valued, and
$\operatorname*{dom}\varphi _{A,k}\setminus \operatorname*{int}A\subseteq \mathop{\rm bd}A+\mathbb{R}_{+}k$. 
\item[(c)] The following conditions are equivalent:\\
\begin{equation}
\varphi _{A,k} \mbox{ is proper},
\end{equation}
\begin{eqnarray}
\operatorname*{int}A & = & \operatorname*{bd}A-\mathbb{R}_{>} k,\label{A-int-cond}\\
A & = & \operatorname*{bd}A -\mathbb{R}_{+} k,\label{A-bd-cond}\\
\operatorname*{dom}\varphi _{A,k} & = & \operatorname*{bd}A +\mathbb{R} k.\label{A-bd-cond2}
\end{eqnarray}
\item[(d)] $\varphi _{A,k} $ is finite-valued if and only if
\begin{equation}
Y=\operatorname*{bd}A+\mathbb{R}k. 
\end{equation}
\item[(e)] If $\varphi _{A,k} $ is proper, then:\\
$\varphi _{A,k} $ is strictly quasiconvex $\iff$ $A$ is a strictly convex set.
\item[(f)] If $\varphi _{A,k} $ is finite-valued, then:\\
$\varphi _{A,k} $ is linear $\iff$ $\operatorname*{bd}A$ is a linear subspace of $Y$.
\end{itemize}
\end{theorem}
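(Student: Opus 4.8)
The plan is to harvest most of the displayed conclusions directly from Theorem~\ref{t251}. The first observation is that assumption $(H2_{A,k})$ \emph{is} condition (\ref{vor-H2}): since $A-\mathbb{R}_{>}k\subseteq\operatorname*{int}A\subseteq A$, we get $A-\mathbb{R}_{+}k\subseteq A$, i.e. $k\in-0^+A\setminus\{0\}$, so $(H1_{A,k})$ holds and the whole of Theorem~\ref{t251}, including the equivalences of its part~(a), is available. In particular, the continuity of $\varphi_{A,k}$ on its domain, the openness of $\operatorname*{dom}\varphi_{A,k}$, and the level-set identities (\ref{f-r252n-wdhl})--(\ref{f-r254-2nn}) (which are just (\ref{f-r252n}), (\ref{int_in_equal}) and (\ref{r0_eq})) follow immediately.

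For the two new set identities I would argue directly. For (\ref{rem251}), the inclusion $\operatorname*{cl}\operatorname*{int}A\subseteq A$ is closedness of $A$, and for the reverse I take $a\in A$, use $a-tk\in\operatorname*{int}A$ for all $t>0$, and let $t\to0^+$ using continuity of scalar multiplication to get $a\in\operatorname*{cl}\operatorname*{int}A$. For (\ref{H2_core_int}) only $\operatorname*{core}A\subseteq\operatorname*{int}A$ is nontrivial: given $a\in\operatorname*{core}A$, choose $\delta>0$ with $a+\delta k\in A$, whence $a=(a+\delta k)-\delta k\in\operatorname*{int}A$ by (\ref{r250a}). The remaining equality $\mathbb{R}k+A=\mathbb{R}k+\operatorname*{int}A$ in (\ref{f-domint}) is the same shifting argument carried out inside $\mathbb{R}k+\,\cdot\,$.

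Parts (a)--(d) are then bookkeeping. Statement (a) combines Theorem~\ref{t251}(b) with the observation that $y+\mathbb{R}k\subseteq A\iff y+\mathbb{R}k\subseteq\operatorname*{int}A$, again via (\ref{r250a}). For (b), identity (\ref{f-r254-2nn}) gives $\{y:\varphi_{A,k}(y)\in\mathbb{R}\}=\bigcup_{t}(tk+\operatorname*{bd}A)=\operatorname*{bd}A+\mathbb{R}k$, and the inclusion $\operatorname*{dom}\varphi_{A,k}\setminus\operatorname*{int}A\subseteq\operatorname*{bd}A+\mathbb{R}_{+}k$ follows by noting, through (\ref{f-r254-1nn}) at $t=0$ and part~(a), that a point outside $\operatorname*{int}A$ has finite value $t\ge0$. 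Part (d) is then immediate from (b). For (c) I would run the cycle: properness gives (\ref{A-bd-cond}) and (\ref{A-bd-cond2}) by Theorem~\ref{t251}(e); (\ref{A-bd-cond})$\iff$(\ref{A-int-cond}) by elementary set algebra using $A=\operatorname*{int}A\cup\operatorname*{bd}A$ together with (\ref{r250a}); (\ref{A-bd-cond}) yields (\ref{A-bd-cond2}) through $\operatorname*{dom}\varphi_{A,k}=\mathbb{R}k+A$; and (\ref{A-bd-cond2}) returns properness by (b).

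The substance of the theorem, and the part I expect to require the most care, is (e) and (f). For (e), $A$ is convex in either direction by Theorem~\ref{t251}(g), and the bridge is identity (\ref{f-r254-1nn}): the strict drop $\varphi_{A,k}(\lambda y^1+(1-\lambda)y^2)<\max(\varphi_{A,k}(y^1),\varphi_{A,k}(y^2))$ is equivalent, after writing $y^i=tk+a^i$ with $t=\max(\varphi_{A,k}(y^1),\varphi_{A,k}(y^2))\in\mathbb{R}$ (finite by properness), to $\lambda a^1+(1-\lambda)a^2\in\operatorname*{int}A$, i.e. to strict convexity of $A$; distinctness of $y^1,y^2$ transfers to $a^1\ne a^2$. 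The delicate points here are matching the author's interior-based notion of a strictly convex set and checking that interior endpoints cause no exception (for convex $A$, a segment with one interior endpoint already lies in $\operatorname*{int}A$). For (f), when $\varphi_{A,k}$ is linear, (\ref{f-r254-2nn}) at $t=0$ identifies $\operatorname*{bd}A=\operatorname*{lev}_{\varphi_{A,k},=}(0)=\ker\varphi_{A,k}$, a subspace. Conversely, if $\operatorname*{bd}A=:H$ is a subspace, then $Y=H+\mathbb{R}k$ by (d); I would first show $k\notin H$ (since $0\in H$ forces $\varphi_{A,k}(0)=0$, while $k\in H$ would give $\varphi_{A,k}(k)=0$ against $\varphi_{A,k}(k)=\varphi_{A,k}(0)+1=1$ from (\ref{f-r255nn})), so that $Y=H\oplus\mathbb{R}k$, and then $\varphi_{A,k}(h+sk)=\varphi_{A,k}(h)+s=s$ is the coordinate functional, hence linear.
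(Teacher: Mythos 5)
Your proposal is correct and follows essentially the same route as the paper: deduce $(H1_{A,k})$ from $(H2_{A,k})$, harvest continuity, the open domain and the level-set identities from Theorem~\ref{t251}(a), verify (\ref{rem251}), (\ref{H2_core_int}) and the shifting identity directly, and then read off (a)--(f) from the identities (\ref{f-r252n-wdhl})--(\ref{f-r254-2nn}) together with translation invariance (\ref{f-r255nn}). Your one genuine streamlining is in (e), where writing both points as $y^i=tk+a^i$ with the common level $t=\max(\varphi_{A,k}(y^1),\varphi_{A,k}(y^2))$ forces $a^1\neq a^2$ and thereby eliminates the case split ($a^1=a^2$ versus $a^1\neq a^2$) that the paper's proof of the backward implication carries out.
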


\begin{proof} Suppose that $(H2_{A,k})$ holds.\\
If $y\in A$, then $y-\frac{1}{n}k\in\operatorname*{int}A$ for any $n\in \mathbb{N}_{>}$, and thus $y\in\operatorname*{cl}(\operatorname*{int}A)$. Hence $A=\operatorname*{cl}(\operatorname*{int}A)$.\\
Since $(H2_{A,k})$ implies $(H1_{A,k})$, Theorem \ref{t251} can be applied. (\ref{core_in_less}) and (\ref{int_in_equal}) yield (\ref{H2_core_int}).\\
$y\in A+\mathbb{R}k \Rightarrow \,\exists a\in A, t\in \mathbb{R}:\quad y=a+tk=a-k+(t+1)k\in \operatorname*{int}A+\mathbb{R}k$. 
Thus $A+\mathbb{R}k= \operatorname*{int}A+\mathbb{R}k\label{A-Rk-int}$.
\begin{itemize}
\item[(a)] $\varphi_{A,k}(y)=-\infty \Leftrightarrow \forall t\in\mathbb{R}:\;\varphi_{A,k}(y)< t \Leftrightarrow \forall t\in\mathbb{R}:\; y\in tk+\operatorname*{int}A \Leftrightarrow y+\mathbb{R}k\subseteq \operatorname*{int}A$.
\item[(b)] The first statement results from (\ref{f-r254-2nn}), the second one from (a) and (\ref{r250a}).
\item[(c)] Assume first that $\varphi_{A,k} $ is proper. Then for each $a\in\operatorname*{int}A$ (\ref{f-r254-1nn}) implies $\varphi_{A,k}(a)=t<0$ for some $t\in\mathbb{R}$           and by (\ref{f-r254-2nn}) $a\in tk+\operatorname*{bd}A\subseteq \operatorname*{bd}A -\mathbb{R}_{>}k$. Thus (\ref{A-int-cond}) holds because of (\ref{r250a}).\\
(\ref{A-int-cond}) implies (\ref{A-bd-cond}) by $A=\operatorname*{bd}A\cup\operatorname*{int}A$.\\
If $A=\operatorname*{bd}A -\mathbb{R}_{+} k$, then $\operatorname*{dom}\varphi _{A,k}=A +\mathbb{R} k
=\operatorname*{bd}A -\mathbb{R}_{+} k+\mathbb{R} k=\operatorname*{bd}A+\mathbb{R} k$.\\
If $\operatorname*{dom}\varphi _{A,k}=\operatorname*{bd}A +\mathbb{R} k$, then $\varphi_{A,k} $ is proper because of  
(\ref{f-r254-2nn}).
\item[(d)] results from (\ref{A-bd-cond2}).
\item[(e)] Let $\varphi _{A,k} $ be proper and $\lambda\in (0,1)$.\\
Assume first that $\varphi _{A,k} $ is strictly quasiconvex. Consider $a^1,a^2\in A$ with $a^1\not= a^2$.
$\Rightarrow$ $\varphi_{A,k}(a^1)\le 0$, $\varphi_{A,k}(a^2)\le 0$ and $\varphi_{A,k}(\lambda a^1+(1-\lambda )a^2)<\operatorname*{max}(\varphi_{A,k}(a^1),\varphi_{A,k}(a^2))\le 0$.
$\Rightarrow \lambda a^1+(1-\lambda )a^2\in\operatorname*{int}A$. Thus $A$ is a strictly convex set.\\
Assume now that $A$ is a strictly convex set. $\Rightarrow$ $\operatorname*{dom}\varphi _{A,k}$ is convex. Consider $y^1, y^2\in\operatorname*{dom}\varphi _{A,k}$ with $y^1\not= y^2$.
$t_1:= \varphi _{A,k}(y^1), t_2:= \varphi _{A,k}(y^2)$. $\Rightarrow\exists a^1,a^2\in A$: $y^1=a^1+t_1k, y^2=a^2+t_2k$.
If $a^1=a^2$, then $t_1\not= t_2$ and $\lambda y^1+(1-\lambda )y^2=a^1+(\lambda t_1+(1-\lambda )t_2)k$, which implies 
$\varphi _{A,k}(\lambda y^1+(1-\lambda )y^2)\le \lambda t_1+(1-\lambda )t_2<\operatorname*{max}(t_1,t_2)$.
If $a^1\not= a^2$, then $\lambda y^1+(1-\lambda )y^2=\lambda a^1+(1-\lambda )a^2+(\lambda t_1+(1-\lambda )t_2)k$, which, because of 
$\lambda a^1+(1-\lambda )a^2\in\operatorname*{int}A$, implies $\varphi _{A,k}(\lambda y^1+(1-\lambda )y^2)<\lambda t_1+(1-\lambda )t_2\le \operatorname*{max}(t_1,t_2)$.
Thus $\varphi _{A,k} $ is strictly quasiconvex.
\item[(f)] follows from (\ref{f-r254-2nn}) applied to $t=0$.
\end{itemize}
\end{proof}

Since $(H2_{A,k})$ implies $(H1_{A,k})$, Theorem \ref{t251} and Theorem \ref{t251M} contain further properties of the functional given in Theorem \ref{prop-funcII}.

The proposition to come will show alternative formulations of assumption $(H2_{A,k})$.
\begin{lemma}\label{l-int_inAC}
Assume that $Y$ is a topological vector space, $A\subseteq Y$ and that $C\subset Y$ is a non-trivial cone. Then \[ \operatorname*{int} \;A\subseteq A-(C\setminus \{0\}), \]
and the following conditions are equivalent:
\begin{itemize}
\item[(a)] $A-(C\setminus \{0\})\subseteq \operatorname*{int} \;A$.
\item[(b)] $A-(C\setminus \{0\})= \operatorname*{int} \;A$.
\end{itemize}
\end{lemma}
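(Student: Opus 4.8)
The plan is to establish the asserted inclusion $\operatorname*{int} A \subseteq A-(C\setminus\{0\})$ first, because the equivalence of (a) and (b) then follows immediately. The inclusion is vacuous when $\operatorname*{int} A=\emptyset$, so I would take an arbitrary $y\in\operatorname*{int} A$ and aim to write it as $a-c$ with $a\in A$ and $c\in C\setminus\{0\}$; this amounts to finding a point of $A$ on the ray $y+\mathbb{R}_{>}c_0$ for some fixed nonzero direction $c_0\in C$.

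First I would fix such a $c_0$: since $C$ is non-trivial we have $C\neq\{0\}$, hence $C\setminus\{0\}\neq\emptyset$ and a nonzero $c_0\in C$ exists. Because $C$ is a cone, $\lambda c_0\in C$ for every $\lambda\in\mathbb{R}_{+}$, and $\lambda c_0\neq 0$ whenever $\lambda>0$. Next I would use that $y\in\operatorname*{int} A$, so some neighbourhood $U$ of $y$ satisfies $U\subseteq A$. By continuity of scalar multiplication in the topological vector space $Y$, the map $\lambda\mapsto\lambda c_0$ is continuous and vanishes at $\lambda=0$, so there is some $\varepsilon>0$ with $y+\lambda c_0\in U\subseteq A$ for all $\lambda\in(0,\varepsilon)$. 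Choosing any such $\lambda$ and setting $a:=y+\lambda c_0\in A$ and $c:=\lambda c_0\in C\setminus\{0\}$ gives $y=a-c\in A-(C\setminus\{0\})$, which establishes $\operatorname*{int} A\subseteq A-(C\setminus\{0\})$.

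For the equivalence, the implication (b)$\Rightarrow$(a) is trivial. For (a)$\Rightarrow$(b), I would simply combine the hypothesis $A-(C\setminus\{0\})\subseteq\operatorname*{int} A$ of (a) with the reverse inclusion $\operatorname*{int} A\subseteq A-(C\setminus\{0\})$ just proved; together these yield the equality asserted in (b).

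I do not expect any serious obstacle here. The only point requiring a little care is the topological-vector-space argument that one can move from $y\in\operatorname*{int} A$ a small positive distance in the direction $c_0$ and remain inside $A$, which rests solely on the continuity of scalar multiplication. It is also worth noting that only the condition $C\neq\{0\}$ is used; the remaining part of non-triviality, namely $C\neq Y$, plays no role in this particular statement.
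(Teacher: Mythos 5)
Your proof is correct and takes essentially the same approach as the paper: for $y\in\operatorname*{int}A$ one finds a small $\lambda>0$ with $y+\lambda c_0\in A$ (the paper does this with an arbitrary $c\in C\setminus\{0\}$, implicitly using the same continuity of scalar multiplication that you spell out) and writes $y=(y+\lambda c_0)-\lambda c_0$, after which the equivalence of (a) and (b) is immediate. Your closing observation that only $C\neq\{0\}$, not $C\neq Y$, is used is accurate but incidental.
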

\begin{proof}
Consider some arbitrary $a\in\operatorname*{int} \;A$ and $c\in C\setminus \{0\}$. Then there exists some 
 $\lambda\in\mathbb{R}_>$ with $a^1:=a+\lambda c\in A$. $\Rightarrow \lambda c\in C\setminus \{0\}$ and $a=a^1-\lambda c$.
 Thus $\operatorname*{int} \;A\subseteq A-(C\setminus \{0\})$. Hence (a) is equivalent to (b).
\end{proof}

\begin{proposition}\label{H2-alternat}
Assume that $Y$ is a topological vector space and $A$ is a closed proper subset of $\,Y$.\\
The following conditions are equivalent to each other for $A$ and $k\in Y\setminus\{0\}$.
\begin{itemize}
\item[(a)] $A-\mathbb{R}_{>} k\subseteq \operatorname*{int}A$.
\item[(b)] $A-\mathbb{R}_{>} k = \operatorname*{int}A$.
\item[(c)] $A-(C\setminus \{0\})=\operatorname*{int}A$ for some non-trivial convex cone $C\subset Y$ with $k\in C$.
\item[(d)] $A-(C\setminus \{0\})=\operatorname*{int}A$ for some non-trivial cone $C\subset Y$ with $k\in C$.
\end{itemize}
If $\,Y$ is a Hausdorff space, then these conditions are equivalent to 
\begin{itemize}
\item[(e)] $A-(C\setminus \{0\})=\operatorname*{int}A$ for some non-trivial closed convex cone $C\subset Y$ with $k\in C$.
\end{itemize}
\end{proposition}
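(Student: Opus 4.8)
The plan is to establish the equivalence of (a)--(d) first and then add (e) under the Hausdorff hypothesis. The engine for everything is Lemma \ref{l-int_inAC} applied to the single cone $C=\mathbb{R}_+ k$. Since $k\neq 0$, this ray is a non-trivial convex cone (it omits $-k$, so it is neither $\{0\}$ nor $Y$) with $C\setminus\{0\}=\mathbb{R}_{>}k$ and $k\in C$. Hence the Lemma at once gives $\operatorname*{int}A\subseteq A-\mathbb{R}_{>}k$ together with the equivalence of $A-\mathbb{R}_{>}k\subseteq\operatorname*{int}A$ and $A-\mathbb{R}_{>}k=\operatorname*{int}A$, which is precisely (a) $\Leftrightarrow$ (b).

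For the remaining implications among (a)--(d) I would run the cycle (b) $\Rightarrow$ (c) $\Rightarrow$ (d) $\Rightarrow$ (a). The step (b) $\Rightarrow$ (c) is witnessed by the very same cone $C=\mathbb{R}_+ k$: it is a non-trivial convex cone containing $k$ with $C\setminus\{0\}=\mathbb{R}_{>}k$, so (b) reads verbatim as (c) for this choice. The step (c) $\Rightarrow$ (d) is trivial, a convex cone being a cone. For (d) $\Rightarrow$ (a), if $C$ is any non-trivial cone with $k\in C$, then $\lambda k\in C\setminus\{0\}$ for every $\lambda\in\mathbb{R}_{>}$, i.e. $\mathbb{R}_{>}k\subseteq C\setminus\{0\}$; therefore $A-\mathbb{R}_{>}k\subseteq A-(C\setminus\{0\})=\operatorname*{int}A$, which is (a). This closes the cycle and proves (a)--(d) equivalent, and note that no separation hypothesis on $Y$ has been used so far.

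Under the additional Hausdorff assumption I would prove (b) $\Rightarrow$ (e) and observe that (e) $\Rightarrow$ (d) is immediate, since a closed convex cone is in particular a cone. For (b) $\Rightarrow$ (e) the candidate cone is once more $C=\mathbb{R}_+ k$; it is a non-trivial convex cone containing $k$ and satisfies $A-(C\setminus\{0\})=\operatorname*{int}A$ by (b), so the only missing property is closedness. Here the Hausdorff hypothesis enters: in a Hausdorff topological vector space every finite-dimensional linear subspace is closed and linearly homeomorphic to a Euclidean space, so $\mathbb{R}k$ is closed in $Y$ and the map $t\mapsto tk$ is a homeomorphism of $\mathbb{R}$ onto $\mathbb{R}k$. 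The half-line $[0,\infty)$ being closed in $\mathbb{R}$, its image $\mathbb{R}_+ k$ is closed in $\mathbb{R}k$, hence closed in $Y$. This yields (e).

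The set-theoretic parts are routine, reducing throughout to the inclusion $\mathbb{R}_{>}k\subseteq C\setminus\{0\}$ and to Lemma \ref{l-int_inAC}. The one genuinely topological point -- and the step I expect to be the main obstacle -- is the closedness of the ray $\mathbb{R}_+ k$, which is exactly why (e) requires the Hausdorff assumption while (a)--(d) do not. I would handle it by invoking the finite-dimensional-subspace theorem rather than by an ad hoc closure computation, so that the argument stays clean and the role of the Hausdorff hypothesis is transparent.
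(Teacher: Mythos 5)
Your proposal is correct and follows essentially the same route as the paper: Lemma \ref{l-int_inAC} with $C=\mathbb{R}_{+}k$ for (a) $\Leftrightarrow$ (b), the cycle through (c) and (d) via $\mathbb{R}_{>}k\subseteq C\setminus\{0\}$, and the witness $C=\mathbb{R}_{+}k$ for (e) in the Hausdorff case (your rearrangement of the cycle, (b) $\Rightarrow$ (e) $\Rightarrow$ (d) instead of the paper's (a) $\Rightarrow$ (e) $\Rightarrow$ (a), is immaterial since (a)--(d) are already equivalent). You additionally spell out, via the closedness of finite-dimensional subspaces, why the ray $\mathbb{R}_{+}k$ is closed in a Hausdorff topological vector space -- a detail the paper leaves implicit -- which correctly isolates the only place the Hausdorff hypothesis is used.
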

\begin{proof}
(a) and (b) are equivalent because of Lemma \ref{l-int_inAC}.\\
(b) implies (c) with $C=\mathbb{R}_+ k$. (c) implies (d). (a) follows from (d). \\
(e) implies (a), and if $Y$ is a Hausdorff space, then (a) implies (e) with $C=\mathbb{R}_+ k$. 
\end{proof}

\begin{remark}
The property $A-(C\setminus \{0\})=\operatorname*{int} \;A$, where $C$ is the ordering cone, is known in production theory as the strong free-disposal assumption.
\end{remark}

Even under condition $(H2_{A,k})$, $A$ is not necessarily convex and $\varphi_{A,k}$ is not necessarily proper. 

\begin{example}
 \label{ex1b} In $Y=\mathbb{R}^{2}$, define $k:=(-1,0)^T$ and $A:=\{ (y_1,y_2)^T\mid -e^{y_1}\le y_2 \le e^{y_1} \}$.
Then assumption $(H2_{A,k})$ in Theorem \ref{prop-funcII} is fulfilled, $\varphi_{A,k}$ has the domain $Y$, but is not proper since $\varphi_{A,k}(y)=-\infty$ for all $y\in Y$ with $y_2=0$.
\end{example}

The assumptions of Theorem \ref{prop-funcII} do not result in $\operatorname*{dom}\varphi _{A,k}=Y$, which will be shown in Example \ref{ex1a}.

Since $(H2_{A,k})$ implies $(H1_{A,k})$, Theorem \ref{t-sep-all} can also be applied in this case for the separation of sets. Moreover, we have:

\begin{theorem}\label{t-sep-H2}
Assume $(H2_{A,k})$ and $D\subseteq Y$.
\begin{itemize}
\item[(a)] $\operatorname*{int} A\cap D=\emptyset\iff \varphi_{A,k}(d)\not< 0 \;\forall d\in D$.
\item[(b)] $\operatorname*{int} A\cap D=\emptyset\implies A\cap \operatorname*{int} D=\emptyset\implies \varphi_{A,k}(d)\not\leq 0\;\forall d\in \operatorname*{int} D$.
\item[(c)] If $D\subseteq \operatorname*{cl}\operatorname*{int}D$, then:\\
$\operatorname*{int} A\cap D=\emptyset\iff A\cap \operatorname*{int} D=\emptyset\iff \varphi_{A,k}(d)\not\leq 0\;\forall d\in \operatorname*{int} D$.
\end{itemize}
\end{theorem}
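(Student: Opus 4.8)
The plan is to reduce all three parts to two level-set identities available under $(H2_{A,k})$ together with Lemma \ref{l-sep-cl}. From Theorem \ref{prop-funcII}, specialized to $t=0$, I have $\operatorname*{lev}_{\varphi_{A,k},<}(0)=\operatorname*{int}A$ (equation (\ref{f-r254-1nn})) and $\operatorname*{lev}_{\varphi_{A,k},\le}(0)=A$ (equation (\ref{f-r252n-wdhl})), that is, $\varphi_{A,k}(y)<0\iff y\in\operatorname*{int}A$ and $\varphi_{A,k}(y)\le 0\iff y\in A$. I also record $A=\operatorname*{cl}\operatorname*{int}A$ from (\ref{rem251}), which is exactly the hypothesis $S_1\subseteq\operatorname*{cl}\operatorname*{int}S_1$ needed to invoke Lemma \ref{l-sep-cl}(b) with $S_1=A$.

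For part (a), I would simply translate the set statement through the identity $\varphi_{A,k}(y)<0\iff y\in\operatorname*{int}A$: the intersection $\operatorname*{int}A\cap D$ is empty precisely when no $d\in D$ lies in $\operatorname*{int}A$, i.e. when $\varphi_{A,k}(d)\not<0$ for every $d\in D$. No further argument is required.

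For part (b), the first implication is where Lemma \ref{l-sep-cl} enters: applying its part (b) with $S_1=A$ and $S_2=D$, which is legitimate because $A\subseteq\operatorname*{cl}\operatorname*{int}A$, converts $\operatorname*{int}A\cap D=\emptyset$ into $A\cap\operatorname*{int}D=\emptyset$. The second implication is pure level-set bookkeeping: since $\varphi_{A,k}(y)\le 0\iff y\in A$, emptiness of $A\cap\operatorname*{int}D$ says exactly that every $d\in\operatorname*{int}D$ fails $d\in A$, hence $\varphi_{A,k}(d)\not\le 0$ there. For part (c), I would upgrade both chains of implications to equivalences using the extra hypothesis $D\subseteq\operatorname*{cl}\operatorname*{int}D$. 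The implications from (b) already give $\operatorname*{int}A\cap D=\emptyset\Rightarrow A\cap\operatorname*{int}D=\emptyset\Rightarrow\varphi_{A,k}(d)\not\le 0\ \forall d\in\operatorname*{int}D$; the last arrow reverses for free from $\varphi_{A,k}(y)\le 0\iff y\in A$, and the remaining arrow $A\cap\operatorname*{int}D=\emptyset\Rightarrow\operatorname*{int}A\cap D=\emptyset$ is a second application of Lemma \ref{l-sep-cl}(b), now with $S_1=D$ and $S_2=A$, permitted exactly because of the assumption $D\subseteq\operatorname*{cl}\operatorname*{int}D$.

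I do not anticipate a genuine obstacle: the content is entirely in the two boundary-level identities of Theorem \ref{prop-funcII} and in Lemma \ref{l-sep-cl}(b). The one point demanding care is keeping the roles of the two sets straight when invoking that lemma, since the hypothesis $S_1\subseteq\operatorname*{cl}\operatorname*{int}S_1$ is supplied automatically by $(H2_{A,k})$ when $S_1=A$ but must be imposed separately (as it is in part (c)) when $S_1=D$.
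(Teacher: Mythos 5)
Your proof is correct and follows essentially the same route as the paper, which likewise derives (a) from the level-set identity (\ref{f-r254-1nn}) at $t=0$ and obtains (b) and the reverse arrows of (c) from the characterization $\operatorname*{lev}_{\varphi_{A,k},\le}(0)=A$ (Theorem \ref{t-sep-all}(1)) together with Lemma \ref{l-sep-cl}, using $A=\operatorname*{cl}\operatorname*{int}A$ under $(H2_{A,k})$. Your only cosmetic deviation is citing (\ref{f-r252n-wdhl}) directly rather than through Theorem \ref{t-sep-all}(1), which is the same fact, and your careful bookkeeping of which set plays the role of $S_1$ in Lemma \ref{l-sep-cl}(b) matches the paper's intent exactly.
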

\begin{proof}
(a) follows from (\ref{f-r254-1nn}). (b) and the reverse direction of (c) result from (1) in Theorem \ref{t-sep-all} and from Lemma \ref{l-sep-cl} since $\operatorname*{cl}\operatorname*{int} A=A$.
\end{proof}

The condition $D\subseteq \operatorname*{cl}\operatorname*{int}D$ is fulfilled if $D$ is a convex set with nonempty interior.
Again, $\not\leq$ and $\not<$ can only be replaced by $>$ and $\geq$, respectively, if $Y={\mathbb{R}}k+A$.

In many cases, properties of a function $\varphi$ become obvious by certain properties of $-\varphi$.
Keep in mind for the next proposition that, for each set $A$ in a topological space $Y$ which fulfills $(H2_{A,k})$, $\operatorname*{bd}(Y\setminus\operatorname*{int}A)=\operatorname*{bd}A$
and $Y\setminus\operatorname*{int}(Y\setminus\operatorname*{int}A)=A$.

\begin{proposition}\label{-phi_Ak}
Assume $(H2_{A,k})$. Then $(H2_{Y\setminus\operatorname*{int}A,-k})$ holds and
\begin{itemize}
\item[(a)] $\operatorname*{bd}A +\mathbb{R} k$ is the subset of $Y$ on which $\varphi _{A,k}$ is finite-valued as well as the subset of $Y$ on which 
$\varphi _{Y\setminus \operatorname*{int}A,-k}$ is finite-valued,
\item[(b)] $\operatorname*{dom}\varphi _{A,k}\cap\operatorname*{dom}\varphi _{Y\setminus \operatorname*{int}A,-k}=\operatorname*{bd}A +\mathbb{R} k$,
\item[(c)] $\varphi _{A,k}(y)=-\varphi _{Y\setminus\operatorname*{int}A,-k}(y)\;\;\forall y\in \operatorname*{bd}A +\mathbb{R} k.$
\end{itemize}
\end{proposition}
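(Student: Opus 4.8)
Throughout, write $\tilde A := Y\setminus\operatorname*{int}A$ and $\tilde k := -k$, so that the second functional is $\varphi_{\tilde A,\tilde k}$. The plan is to reduce parts (a) and (b) entirely to Theorem \ref{prop-funcII} applied to the pair $(\tilde A,\tilde k)$, after first checking that this pair satisfies $(H2_{\tilde A,\tilde k})$, and then to derive (c) by a direct manipulation of the defining infimum. I would begin by recording the two topological identities stated just before the proposition. Since $(H2_{A,k})$ gives $A=\operatorname*{cl}(\operatorname*{int}A)$ by (\ref{rem251}), one has $\operatorname*{int}\tilde A=Y\setminus\operatorname*{cl}(\operatorname*{int}A)=Y\setminus A$ and hence $\operatorname*{bd}\tilde A=\operatorname*{bd}A$. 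In particular $\tilde A$ is closed; it is proper because $\operatorname*{int}A\neq\emptyset$ (by (\ref{f-domint})) forces $\tilde A\neq Y$, while $A\neq Y$ forces $\tilde A\neq\emptyset$. To verify the inclusion $\tilde A-\mathbb{R}_{>}\tilde k\subseteq\operatorname*{int}\tilde A$, that is $\tilde A+\mathbb{R}_{>}k\subseteq Y\setminus A$, I would argue by contraposition: if $y+tk\in A$ for some $t>0$, then $y=(y+tk)-tk\in A-\mathbb{R}_{>}k\subseteq\operatorname*{int}A$ by $(H2_{A,k})$, so $y\notin\tilde A$. This establishes $(H2_{\tilde A,\tilde k})$ and lets me invoke Theorem \ref{prop-funcII} for the second functional.

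Part (a) is then immediate: Theorem \ref{prop-funcII}(b) applied to $(A,k)$ says $\varphi_{A,k}$ is finite-valued exactly on $\operatorname*{bd}A+\mathbb{R}k$, and the same statement applied to $(\tilde A,\tilde k)$ says $\varphi_{\tilde A,\tilde k}$ is finite-valued exactly on $\operatorname*{bd}\tilde A+\mathbb{R}\tilde k=\operatorname*{bd}A+\mathbb{R}k$, using $\operatorname*{bd}\tilde A=\operatorname*{bd}A$ and $\mathbb{R}\tilde k=\mathbb{R}k$. For part (b) I would decompose each effective domain into its common finite-valued part $P:=\operatorname*{bd}A+\mathbb{R}k$ and its $-\infty$-part. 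By Theorem \ref{prop-funcII}(a), the $-\infty$-part of $\varphi_{A,k}$ is $N_1:=\{y\mid y+\mathbb{R}k\subseteq\operatorname*{int}A\}$ and that of $\varphi_{\tilde A,\tilde k}$ is $N_2:=\{y\mid y+\mathbb{R}k\subseteq\operatorname*{int}\tilde A=Y\setminus A\}$. Since $N_1$ forces $y+\mathbb{R}k\subseteq A$ while $N_2$ forces $(y+\mathbb{R}k)\cap A=\emptyset$, the sets $N_1,N_2$ are disjoint; together with $N_1\cap P=N_2\cap P=\emptyset$ this yields $(P\cup N_1)\cap(P\cup N_2)=P$, which is exactly (b).

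The main computation is part (c). For $y\in P$ both values are finite. Writing the second infimum with the substitution $s=-t$,
\[
\varphi_{\tilde A,\tilde k}(y)=\inf\{s\in\mathbb{R}\mid y+sk\notin\operatorname*{int}A\}=-\sup\{t\in\mathbb{R}\mid y-tk\notin\operatorname*{int}A\}.
\]
The strict sublevel identity (\ref{f-r254-1nn}) gives $y-tk\in\operatorname*{int}A\iff\varphi_{A,k}(y)<t$, so $y-tk\notin\operatorname*{int}A\iff t\le\varphi_{A,k}(y)$. Hence the set on the right is $(-\infty,\varphi_{A,k}(y)]$, whose supremum is $\varphi_{A,k}(y)$, and therefore $\varphi_{\tilde A,\tilde k}(y)=-\varphi_{A,k}(y)$, as claimed.

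I expect the only genuine obstacle to be bookkeeping: correctly translating $\operatorname*{int}\tilde A$ and $\operatorname*{bd}\tilde A$ back to $A$ via (\ref{rem251}), and tracking the sign flip $\tilde k=-k$, which leaves $\mathbb{R}k$ invariant but reverses $\mathbb{R}_{>}k$. Once $(\tilde A,\tilde k)$ is shown to satisfy $(H2_{\tilde A,\tilde k})$, parts (a)--(c) are short applications of Theorem \ref{prop-funcII} together with the single infimum manipulation above.
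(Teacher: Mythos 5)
Your proposal is correct and follows essentially the same route as the paper: verify $(H2_{Y\setminus\operatorname*{int}A,-k})$ (which the paper dismisses as obvious, while you rightly supply the details via (\ref{rem251}) and the identities $\operatorname*{int}(Y\setminus\operatorname*{int}A)=Y\setminus A$, $\operatorname*{bd}(Y\setminus\operatorname*{int}A)=\operatorname*{bd}A$), then deduce (a) and (b) from Theorem \ref{prop-funcII}(a),(b) applied to both pairs, exactly as the paper does. The only cosmetic deviation is in (c), where the paper reads the equality off the level-set identity (\ref{f-r254-2nn}) for both pairs ($t=\varphi_{A,k}(y)\iff y\in tk+\operatorname*{bd}A\iff -t=\varphi_{Y\setminus\operatorname*{int}A,-k}(y)$), whereas you obtain the same conclusion from the strict sublevel identity (\ref{f-r254-1nn}) together with an $\inf$--$\sup$ substitution; both are immediate consequences of Theorem \ref{prop-funcII} and equally valid.
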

\begin{proof}
Obviously, $(H2_{A,k})$ implies $(H2_{Y\setminus\operatorname*{int}A,-k})$. 
\begin{itemize}
\item[(a)] Because of Theorem \ref{prop-funcII}, $\varphi _{A,k}$ is finite-valued on $\operatorname*{bd}A +\mathbb{R} k$ and
$\varphi _{Y\setminus\operatorname*{int}A,-k}$ is finite-valued on $\operatorname*{bd}(Y\setminus\operatorname*{int}A) +\mathbb{R} k=\operatorname*{bd}A +\mathbb{R} k$.
\item[(b)] $y\in\operatorname*{dom}\varphi _{A,k}\setminus (\operatorname*{bd}A +\mathbb{R} k) \Leftrightarrow \varphi _{A,k}(y)=-\infty \Leftrightarrow y+\mathbb{R} k\subseteq\operatorname*{int}A$.
The last inclusion implies $y\not\in (Y\setminus \operatorname*{int}A)-\mathbb{R} k=\operatorname*{dom}\varphi _{Y\setminus \operatorname*{int}A,-k}$. 
\item[(c)] We get by (\ref{f-r254-2nn}) for all $y\in \operatorname*{bd}A +\mathbb{R} k$:
$\;t=\varphi _{A,k}(y) \Leftrightarrow y\in tk+\operatorname*{bd}A \Leftrightarrow y\in (-t)\cdot (-k)+\operatorname*{bd}(Y\setminus\operatorname*{int}A)
\Leftrightarrow -t=\varphi _{Y\setminus\operatorname*{int}A,-k}(y).$
\end{itemize}
\end{proof}

\begin{remark}
Proposition \ref{-phi_Ak} gives us a tool to transfer results related to the minimization of functions of type $\varphi_{A,k}$ to the maximization of functions of this type. 
\end{remark}

\begin{example} \label{ex1a}
Consider $Y=\mathbb{R}^{2}$, $k:=(1,0)^T$ and $A:=\{ (y_1,y_2)^T\mid y_2\ge e^{y_1}\}$.
Then $(H2_{A,k})$ is fulfilled and $\operatorname*{dom}\varphi _{A,k} ={\mathbb{R}}k+A=\{ (y_1,y_2)^T\mid y_2 > 0\}$.
$\varphi_{A,k}$ is a proper functional.\\
$\operatorname*{dom}\varphi _{Y\setminus\operatorname*{int}A,-k}=Y$, but the set on which $\varphi _{Y\setminus\operatorname*{int}A,-k}$ is finite-valued is $\operatorname*{dom}\varphi _{A,k}$, where $\varphi _{Y\setminus\operatorname*{int}A,-k}=-\varphi _{A,k}$ holds.
\end{example}

Proposition \ref{-phi_Ak}, Theorem \ref{t251} and Theorem \ref{prop-funcII} result in the following statements.
\begin{proposition}\label{phi_Ak_cv}
Assume $(H2_{A,k})$ and $Y=\operatorname*{bd}A+\mathbb{R}k$. \\
Then 
$\varphi _{A,k}:Y\rightarrow \mathbb{R}$ given by $\varphi_{A,k} (y)= \inf \{t\in
{\mathbb{R}} \mid y\in tk + A\}$ is
\begin{itemize}
\item[(a)] concave $\iff Y\setminus\operatorname*{int}A$ is convex,
\item[(b)] strictly quasiconcave $\iff Y\setminus\operatorname*{int}A$ is a strictly convex set,
\item[(c)] superadditive $\iff (Y\setminus\operatorname*{int}A)+(Y\setminus\operatorname*{int}A)\subseteq Y\setminus\operatorname*{int}A$.
\end{itemize}
\end{proposition}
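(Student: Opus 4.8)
The plan is to reduce all three equivalences to the already-established convexity/quasiconvexity/subadditivity characterizations for a functional of type $\varphi_{A,k}$ by passing to the ``mirror'' functional. Concretely, I would set $A':=Y\setminus\operatorname*{int}A$ and $k':=-k$. Proposition \ref{-phi_Ak} tells us that $(H2_{A',k'})$ holds and that $\varphi_{A,k}(y)=-\varphi_{A',k'}(y)$ for every $y\in\operatorname*{bd}A+\mathbb{R}k$. The extra hypothesis $Y=\operatorname*{bd}A+\mathbb{R}k$ is exactly what promotes this identity to \emph{all} of $Y$, so that
\[
\varphi_{A,k}(y)=-\varphi_{A',k'}(y)\quad\forall\,y\in Y.
\]
By Theorem \ref{prop-funcII}(d) the same hypothesis makes $\varphi_{A,k}$ finite-valued; hence $\varphi_{A',k'}$ is finite-valued as well, and in particular proper. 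These two facts — the global negation identity and the properness of $\varphi_{A',k'}$ — are the whole engine of the proof, and establishing them cleanly is the only genuinely load-bearing step.

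With this in place, each part is a one-line dualization. For (a), $\varphi_{A,k}$ is concave precisely when $-\varphi_{A,k}=\varphi_{A',k'}$ is convex; Theorem \ref{t251}(g) (applied to the pair $A',k'$, which satisfies $(H1_{A',k'})$ since $(H2_{A',k'})$ implies it) says this happens iff $A'=Y\setminus\operatorname*{int}A$ is convex. For (c), superadditivity of $\varphi_{A,k}$ is equivalent to subadditivity of $\varphi_{A',k'}$, which by Theorem \ref{t251}(i) is equivalent to $A'+A'\subseteq A'$, i.e.\ $(Y\setminus\operatorname*{int}A)+(Y\setminus\operatorname*{int}A)\subseteq Y\setminus\operatorname*{int}A$.

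For (b), $\varphi_{A,k}$ is strictly quasiconcave exactly when $-\varphi_{A,k}=\varphi_{A',k'}$ is strictly quasiconvex by Definition \ref{d-mon}(f). Here I invoke Theorem \ref{prop-funcII}(e) for the pair $A',k'$: since $\varphi_{A',k'}$ is proper (established above), that theorem gives strict quasiconvexity of $\varphi_{A',k'}$ iff $A'=Y\setminus\operatorname*{int}A$ is a strictly convex set, which is the assertion. The only point needing care is that Theorem \ref{prop-funcII}(e) carries the standing assumption ``$\varphi$ proper,'' so I must cite the properness of $\varphi_{A',k'}$ explicitly before applying it; this is precisely why the global identity and the finite-valuedness were worth isolating at the outset. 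No further computation is required, since each equivalence is obtained by composing the negation identity with the corresponding characterization already proved for functionals of type $\varphi_{A,k}$.
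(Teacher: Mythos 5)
Your proposal is correct and is essentially the paper's own argument: the paper proves this proposition precisely by combining Proposition \ref{-phi_Ak} (the identity $\varphi_{A,k}=-\varphi_{Y\setminus\operatorname*{int}A,-k}$ on $\operatorname*{bd}A+\mathbb{R}k=Y$) with the characterizations in Theorem \ref{t251}(g),(i) and Theorem \ref{prop-funcII}(e). Your explicit isolation of the finite-valuedness of $\varphi_{Y\setminus\operatorname*{int}A,-k}$ (needed for the properness hypothesis in Theorem \ref{prop-funcII}(e)) is exactly the right bookkeeping and matches what the paper's citation chain implicitly uses.
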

In economics, utility functions are often assumed to be strictly quasiconcave.

We now turn to sets $A$ for which the recession cone has a nonempty interior and $-k$ is an element of this interior. 
Proposition \ref{k-core} implies:

\begin{proposition}\label{prop-finite}
Assume
\begin{description}
\item[$(H3_{A,k})$] $Y$ is a topological vector space, $A$ is a closed proper subset of $\;Y$ and $k\in -\operatorname*{int}0^+A$.
\end{description}
Then $(H2_{A,k})$ holds and $\varphi_{A,k}$ is continuous and finite-valued.\\
Moreover, $(H3_{0^+A,k})$ holds and (\ref{vor_sublin_dom}).
\end{proposition}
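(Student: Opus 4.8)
The plan is to reduce $(H3_{A,k})$ to the hypotheses of Proposition \ref{k-core} and to verify $(H2_{A,k})$ directly, after which continuity, finite-valuedness and the inequality (\ref{vor_sublin_dom}) follow from results already at hand. First I would record that the topological interior is always contained in the algebraic interior, so $\operatorname*{int}0^+A\subseteq\operatorname*{core}0^+A$; hence $k\in-\operatorname*{int}0^+A$ gives $k\in-\operatorname*{core}0^+A$, and since $\operatorname*{int}0^+A\subseteq 0^+A$ also $k\in-0^+A$. To see $k\ne 0$, note that $0^+A$ is a cone, so an interior point at the origin would force $0^+A=Y$; but since $A$ is a nonempty proper subset we have $0^+A\ne Y$ (otherwise $a+u\in A$ for some $a\in A$ and every $u\in Y$, i.e. $A=Y$). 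Thus $0\notin\operatorname*{int}0^+A$ and $k\ne 0$, so $(H1_{A,k})$ holds together with $k\in-\operatorname*{core}0^+A$. Proposition \ref{k-core} then applies and already delivers that $\varphi_{A,k}$ and $\varphi_{0^+A,k}$ are finite-valued and that (\ref{vor_sublin_dom}) holds.

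The crux is to establish $(H2_{A,k})$, i.e. $A-\mathbb{R}_{>}k\subseteq\operatorname*{int}A$, and I expect this to be the main obstacle. I would use two observations. First, for any $a\in A$ one has $a+\operatorname*{int}0^+A\subseteq\operatorname*{int}A$: if $u\in\operatorname*{int}0^+A$, choose an open $W$ with $u\in W\subseteq 0^+A$; then $a+W$ is open, and $a+W\subseteq a+0^+A\subseteq A$ because $a+w=a+1\cdot w\in A$ for every recession direction $w$, so $a+u\in\operatorname*{int}A$. Second, $\operatorname*{int}0^+A$ is invariant under multiplication by $\mathbb{R}_{>}$ (the interior of a cone is again stable under positive scaling), so $-k\in\operatorname*{int}0^+A$ gives $\mathbb{R}_{>}(-k)\subseteq\operatorname*{int}0^+A$. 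Combining these, $A-\mathbb{R}_{>}k=A+\mathbb{R}_{>}(-k)\subseteq A+\operatorname*{int}0^+A\subseteq\operatorname*{int}A$, which is exactly $(H2_{A,k})$. With $(H2_{A,k})$ in hand, Theorem \ref{prop-funcII} (equivalently Theorem \ref{t251}(a)) yields that $\varphi_{A,k}$ is continuous on its domain; since it is finite-valued, its domain is all of $Y$, so $\varphi_{A,k}$ is continuous.

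It remains to verify $(H3_{0^+A,k})$. Since $A$ is closed, $0^+A$ is closed (as already used in Proposition \ref{p-varphi_rec}), and by the above it is a nonempty proper subset of $Y$. Moreover $0^+A$ is a convex cone: it is a cone by definition, and if $u_1,u_2\in 0^+A$ then for every $a\in A$ and $t\ge 0$ we have $a+tu_1\in A$ and hence $a+t(u_1+u_2)=(a+tu_1)+tu_2\in A$, so $u_1+u_2\in 0^+A$. For any convex cone $C$ one has $0^+C=C$ (the inclusion $C\subseteq 0^+C$ uses closure under addition and positive scaling, and $0^+C\subseteq C$ follows by taking the origin and $t=1$), whence $0^+(0^+A)=0^+A$. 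Therefore $k\in-\operatorname*{int}0^+A=-\operatorname*{int}0^+(0^+A)$, which is precisely $(H3_{0^+A,k})$. This finishes the argument, the finite-valuedness of $\varphi_{0^+A,k}$ and the inequality (\ref{vor_sublin_dom}) having already been carried over from Proposition \ref{k-core}.
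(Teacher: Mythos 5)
Your proof is correct and follows essentially the route the paper intends: the paper states this proposition as an immediate consequence of Proposition \ref{k-core}, and your write-up supplies exactly the omitted verifications --- that $k\in-\operatorname*{int}0^+A$ forces $k\neq 0$ and $k\in-\operatorname*{core}0^+A$, that $A+\operatorname*{int}0^+A\subseteq\operatorname*{int}A$ together with positive-scaling invariance of $\operatorname*{int}0^+A$ yields $(H2_{A,k})$, and that closedness of $0^+A$ plus $0^+(0^+A)=0^+A$ gives $(H3_{0^+A,k})$ (the latter identity is the same one the paper uses in the proof of Theorem \ref{th-Lip-varphi}). All of these steps are sound, so nothing is missing.
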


Condition $(H3_{A,k})$ could also be formulated alternatively.
\begin{proposition}\label{H3-alternat}
Assume that $Y$ is a topological vector space and $A$ is a closed proper subset of $\,Y$.\\
The following conditions are equivalent to each other for $A$ and $k\in Y$.
\begin{itemize}
\item[(a)] $k\in -\operatorname*{int}0^+A$.
\item[(b)] $A=H-C$ for some proper subset $H$ of $\,Y$ and some non-trivial convex cone $C\subset Y$ with $k\in\operatorname*{int}C$.
\item[(c)] $A=A-C$ for some non-trivial closed convex cone $C\subset Y$ with $k\in\operatorname*{int}C$.
\item[(d)] $A=A-C$ for some non-trivial cone $C\subset Y$ with $k\in\operatorname*{int}C$.
\item[(e)] $A-\operatorname*{int}C\subseteq A$ for some closed convex cone $C\subset Y$ with $k\in\operatorname*{int}C$.
\item[(f)] $A-\operatorname*{int}C\subseteq A$ for some cone $C\subset Y$ with $k\in\operatorname*{int}C$.
\end{itemize}
\end{proposition}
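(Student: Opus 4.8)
The plan is to collapse all six conditions onto the recession–cone description of $(H3_{A,k})$, exactly as Proposition \ref{H1-alternat} does for $k\in -0^+A$. The backbone is that $0^+A$ is a closed convex cone. Closedness comes from the closedness of $A$ (the same fact already invoked for Proposition \ref{p-varphi_rec}), and convexity follows straight from the definition: for $u^1,u^2\in 0^+A$, $\lambda\in[0,1]$, $a\in A$ and $t\geq 0$ one has $a+t\lambda u^1\in A$, and then $(a+t\lambda u^1)+t(1-\lambda)u^2\in A$, i.e. $a+t(\lambda u^1+(1-\lambda)u^2)\in A$. Consequently $C:=-0^+A$ is a closed convex cone, and since $-\operatorname*{int}0^+A=\operatorname*{int}(-0^+A)=\operatorname*{int}C$, condition (a) reads simply $k\in\operatorname*{int}C$.

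First I would produce all the constructions out of (a) using this single $C$. The recession property gives $A+0^+A=A$, that is $A-C=A$, and since $\operatorname*{int}C\subseteq C$ also $A-\operatorname*{int}C\subseteq A-C=A$; thus (a) yields (c) and (e) with $C=-0^+A$, and (b) with $H=A$, the non-triviality of $C$ being automatic from $\operatorname*{int}C\ni k$ (so $C\neq\{0\}$) and $A\neq Y$ (so $0^+A\neq Y$). The trivial weakenings (c)$\Rightarrow$(d), (e)$\Rightarrow$(f) and (d)$\Rightarrow$(f) (the last again via $\operatorname*{int}C\subseteq C$) then reduce the whole cycle to two remaining tasks: recovering (a) from (f), and establishing (b)$\Rightarrow$(a).

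The step I expect to be the crux is (f)$\Rightarrow$(a), because there one is handed only a single subtraction $A-\operatorname*{int}C\subseteq A$ for an arbitrary cone $C$ with $k\in\operatorname*{int}C$, and must upgrade this to membership of $-k$ in the \emph{topological interior} of $0^+A$. The upgrade proceeds in two moves. Since $C$ is a cone, scaling by $t>0$ is a homeomorphism fixing $C$, so $t\cdot\operatorname*{int}C=\operatorname*{int}C$; hence for $u\in\operatorname*{int}C$ one gets $tu\in\operatorname*{int}C$ and therefore $a-tu\in A$ for all $a\in A$ and all $t\geq 0$, i.e. $-u\in 0^+A$, giving $-\operatorname*{int}C\subseteq 0^+A$. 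Because $-\operatorname*{int}C=\operatorname*{int}(-C)$ is open, it lies inside $\operatorname*{int}0^+A$, and $k\in\operatorname*{int}C$ then forces $-k\in\operatorname*{int}0^+A$, which is (a).

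Finally, for (b)$\Rightarrow$(a) I would argue directly from $A=H-C$ with $C$ a non-trivial convex cone and $k\in\operatorname*{int}C$: for $a=h-c'\in A$ (with $h\in H$, $c'\in C$), any $c\in C$ and any $t\geq 0$, convexity of the cone gives $c'+tc\in C$, so $a-tc=h-(c'+tc)\in H-C=A$; hence $-c\in 0^+A$, i.e. $-C\subseteq 0^+A$. Monotonicity of the interior together with $k\in\operatorname*{int}C$ then yields $k\in\operatorname*{int}(-0^+A)=-\operatorname*{int}0^+A$, which is (a). Combined with (a)$\Rightarrow$(b) this settles the last equivalence, and I would note that, in contrast to Proposition \ref{H2-alternat}, no Hausdorff hypothesis enters, since the closedness of $0^+A$ holds in every topological vector space.
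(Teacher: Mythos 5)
Your proposal is correct and follows essentially the same route as the paper's own (very terse) proof: produce (b), (c), (e) from (a) with the single choice $C=-0^+A$, $H=A$, pass to the weaker conditions trivially, and recover (a) from the weakest ones via $-\operatorname*{int}C\subseteq 0^+A$ (respectively $C\subseteq -0^+A$) together with monotonicity of the interior. Your only deviations are harmless: you spell out the details the paper suppresses (closed convexity of $0^+A$, the scaling argument $t\cdot\operatorname*{int}C=\operatorname*{int}C$, openness of $-\operatorname*{int}C$) and route $(d)\Rightarrow(f)\Rightarrow(a)$ instead of the paper's direct $(d)\Rightarrow(a)$, which is the same argument.
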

\begin{proof}
\begin{itemize}
\item[]
\item[(i)] (a) implies (b) with $H=A$ and $C=-0^+A$. (b) implies (a) since $C\subseteq -0^+A$.
\item[(ii)] (a) implies (c) with $C=-0^+A$. (c) yields (d). (d) implies (a) because of $C\subseteq -0^+A$.
\item[(iii)] (c) yields (e). (e) implies (f). (f) implies (a).
\end{itemize}
\end{proof}

If $A$ is a convex cone, then $(H3_{A,k})$ is equivalent to $(H2_{A,k})$. But in general, $(H2_{A,k})$ does not imply $k\in -\operatorname*{int} 0^+A$. $\operatorname*{int} 0^+A$ may be empty, even if $A$ is convex, has a non-empty interior and $\varphi_{A,k}$ is finite-valued.
\begin{example}
In $Y=\mathbb{R}^{2}$, define $k:=(0,-1)^T$ and $A:=\{ (y_1,y_2)^T\mid y_2\ge y_1^{2}\}$.
Then assumption $(H2_{A,k})$ in Theorem \ref{prop-funcII} is fulfilled. $\varphi_{A,k}$ is finite-valued and $A$ is convex, but $0^+A=\{-k\}$.
\end{example}

$(H3_{A,k})$ can hold though $A$ is not convex and $A+A\not\subseteq A$.
\begin{example}
In $Y=\mathbb{R}^{2}$, define $k:=(-1,-1)^T$ and $A:=(\mathbb{R}_+^2+(-1,0)^T)\cup(\mathbb{R}_+^2 +(0,-1)^T)$.
Then $(H3_{A,k})$ is fulfilled, but $A$ is not convex and $A+A\not\subseteq A$ since $(-1,0)^T+(0,-1)^T\not\in A$.
\end{example}

We are now going to investigate the Lipschitz continuity of $\varphi_{A,k}$.

\begin{theorem}\label{th-Lip-varphi}
Assume that $Y$ is a Banach space and $A$ a proper closed subset of $Y$ for which there exists some $k\in-0^+A\setminus\{0\}$.
Then:\\
$\varphi_{A,k}$ is finite-valued and Lipschitz $\iff \varphi_{0^+A,k}$ is finite-valued and Lipschitz $\iff k\in -\operatorname*{int}0^+A$.\\
If $k\in-\operatorname*{int}0^+A$, then each Lipschitz constant of $\varphi_{0^+A,k}$ is also a Lipschitz constant of $\varphi_{A,k}$.
\end{theorem}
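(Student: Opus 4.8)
The plan is to route both equivalences through the condition $k\in-\operatorname*{int}0^+A$, which I label (III), and to abbreviate by (I) the statement ``$\varphi_{A,k}$ is finite-valued and Lipschitz'' and by (II) the statement ``$\varphi_{0^+A,k}$ is finite-valued and Lipschitz''. The whole theorem rests on a single observation, which I would isolate as a lemma: \emph{if $(H1_{B,k})$ holds in the Banach space, $\varphi_{B,k}$ is finite-valued, and $L>0$ is a Lipschitz constant of $\varphi_{B,k}$, then $-k\in\operatorname*{int}0^+B$; in fact the closed ball of radius $1/L$ about $-k$ lies in $0^+B$.} To prove it I would fix $v$ with $\|v+k\|\le 1/L$, put $w:=v+k$, take any $a\in B$ and $t\ge 0$, and combine $\varphi_{B,k}(a)\le 0$ (from (\ref{f-r252n})), the translation invariance (\ref{f-r255nn}), and the Lipschitz bound to obtain
\[
\varphi_{B,k}(a+tv)=\varphi_{B,k}(a+tw)-t\le \varphi_{B,k}(a)+tL\|w\|-t\le -t\,(1-L\|w\|)\le 0 ,
\]
so $a+tv\in B$ by (\ref{f-r252n}); hence $B+\mathbb{R}_{+}v\subseteq B$, i.e. $v\in 0^+B$. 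Here $L>0$ is automatic, since translation invariance forbids $\varphi_{B,k}$ from being constant.

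Applying this lemma with $B=A$ yields (I)$\Rightarrow$(III) immediately. For (II)$\Rightarrow$(III) I would apply it with $B=0^+A$: Proposition \ref{p-varphi_rec} guarantees $(H1_{0^+A,k})$, and since $0^+A$ is a convex cone one has $0^+(0^+A)=0^+A$, so the lemma gives $-k\in\operatorname*{int}0^+(0^+A)=\operatorname*{int}0^+A$, which is (III).

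For the reverse direction I would start from (III), that is from $(H3_{A,k})$, and invoke Proposition \ref{prop-finite}: it yields that $\varphi_{A,k}$ is finite-valued, that $(H3_{0^+A,k})$ holds (so $\varphi_{0^+A,k}$ is finite-valued and continuous), and the inequality (\ref{vor_sublin_dom}). Since $0^+A$ is a convex cone, $\varphi_{0^+A,k}$ is sublinear by Theorem \ref{t251}(j), and a finite-valued continuous sublinear functional on a normed space is Lipschitz: continuity at $0$ with $\varphi_{0^+A,k}(0)=0$ gives a bound $\varphi_{0^+A,k}(z)\le L\|z\|$, and subadditivity then gives $|\varphi_{0^+A,k}(y)-\varphi_{0^+A,k}(y')|\le L\|y-y'\|$. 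This proves (III)$\Rightarrow$(II). Finally, for \emph{any} Lipschitz constant $L$ of $\varphi_{0^+A,k}$ one has $\varphi_{0^+A,k}(z)=\varphi_{0^+A,k}(z)-\varphi_{0^+A,k}(0)\le L\|z\|$, so (\ref{vor_sublin_dom}) gives $\varphi_{A,k}(y^0)-\varphi_{A,k}(y^1)\le\varphi_{0^+A,k}(y^0-y^1)\le L\|y^0-y^1\|$; interchanging $y^0$ and $y^1$ upgrades this to $|\varphi_{A,k}(y^0)-\varphi_{A,k}(y^1)|\le L\|y^0-y^1\|$. This establishes (III)$\Rightarrow$(I) and, simultaneously, the final assertion that every Lipschitz constant of $\varphi_{0^+A,k}$ is also one of $\varphi_{A,k}$.

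The routine ingredients are the standard ``continuous sublinear $\Rightarrow$ Lipschitz'' step and the bookkeeping of the implication cycle. The step I expect to be the main obstacle, and the real content of the theorem, is the lemma: one must see that translation invariance along $k$, \emph{combined with a global Lipschitz bound}, converts smallness of $\|v+k\|$ into the recession inclusion $v\in 0^+B$, and one must get the sign of the inequality chain right. The payoff is that the very same lemma, applied to $0^+A$ through the identity $0^+(0^+A)=0^+A$, settles (II)$\Rightarrow$(III) with no extra work.
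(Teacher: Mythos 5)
Your proposal is correct and takes essentially the same route as the paper's proof: your key lemma is exactly the paper's step (b) (showing the ball of radius $1/L$ about $-k$ lies in $0^+B$ via translation invariance and the Lipschitz bound), which the paper likewise applies a second time to $0^+A$ using $0^+(0^+A)=0^+A$ to obtain the second equivalence. Your direction (III)$\Rightarrow$(I)$\wedge$(II), via Proposition \ref{prop-finite}, sublinearity of $\varphi_{0^+A,k}$ and inequality (\ref{vor_sublin_dom}), coincides with the paper's step (a), including the transfer of the Lipschitz constant; you merely spell out the routine ``finite-valued continuous sublinear $\Rightarrow$ Lipschitz'' step that the paper leaves implicit.
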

\begin{proof}
The assumption implies $(H1_{A,k})$ and $(H1_{0^+A,k})$.
\begin{itemize}
\item[(a)] Assume first $k\in-\operatorname*{int}0^+A$.
By Proposition \ref{prop-finite}, $\varphi_{A,k}$ and $\varphi_{0^+A,k}$ are continuous and finite-valued.
Since $0^+A$ is a convex cone, $\varphi_{0^+A,k}$ is sublinear and thus Lipschitz with some Lipschitz constant $L$.
Assume now $y^0, y^1\in Y$ and choose the notation such that $\varphi_{A,k}(y^0)\geq \varphi_{A,k} (y^1)$.
$\Rightarrow 0\leq \varphi_{A,k}(y^0)-\varphi_{A,k}(y^1)\leq \varphi_{0^+A,k}(y^0-y^1)$ by (\ref{vor_sublin_dom}).
$\Rightarrow |\varphi_{A,k}(y^0)-\varphi_{A,k}(y^1)|\leq |\varphi_{0^+A,k}(y^0-y^1)|=|\varphi_{0^+A,k}(y^0-y^1)-\varphi_{0^+A,k}(0)|\leq L||y^0-y^1||$.
Hence $\varphi_{A,k}$ is Lipschitz with Lipschitz constant $L$.
\item[(b)] Assume now that $\varphi_{A,k}$ is finite-valued and Lipschitz with some Lipschitz constant $L$.
$U:=\{y\in Y\mid ||y||\leq \frac{1}{L}\}$ is a neighborhood of $0$. Consider arbitrary elements $u\in U$, $a\in A$ and $\lambda\in\mathbb{R}_+$.
$|\varphi_{A,k}(a+\lambda u)-\varphi_{A,k}(a)|\leq L||\lambda u||=L\lambda ||u||\leq \lambda$.
$\Rightarrow \varphi_{A,k}(a+\lambda u)\leq \varphi_{A,k}(a)+\lambda$. $\Rightarrow \varphi_{A,k}(a+\lambda (u-k))=\varphi_{A,k}(a+\lambda u)-\lambda\leq \varphi_{A,k}(a)\leq 0$.
$\Rightarrow a+\lambda (u-k)\in A$. $\Rightarrow u-k\in 0^+A\;\forall u\in U$. $\Rightarrow k\in -\operatorname*{int}0^+A$.
\item[(c)] Assume that $\varphi_{0^+A,k}$ is finite-valued and Lipschitz. Apply (b) to $0^+A$ instead of $A$. This yields 
$k\in -\operatorname*{int}0^+(0^+A)=-\operatorname*{int}0^+A$.
\end{itemize}
\end{proof}

The function $\varphi_{A,k}$ given in Example \ref{ex-Lip-bd} is Lipschitz, but not finite-valued \cite{TamZal10}. There, of course, $k\not\in -\operatorname*{int}0^+A$. Tammer and Z{\u{a}}linescu proved, in more general spaces than those in Theorem \ref{th-Lip-varphi}, that, under the assumptions given there, $\varphi_{A,k}$ is finite-valued and Lipschitz if and only if $k\in -\operatorname*{int}0^+A$ \cite[Corollary 3.4]{TamZal10}.

In Proposition \ref{Fact4} we will show that $\varphi_{A,k}$ is locally Lipschitz on $\operatorname*{int}\operatorname*{dom}\varphi_{A,k}$ if $A$ is a proper closed convex subset of a Banach space $\;Y$ and $k\in (-0^+A)\setminus 0^+A$. Tammer and Z\u{a}linescu \cite{TamZal10} gave an example of a function $\varphi_{A,k}$ which is not locally Lipschitz on $\operatorname*{int}\operatorname*{dom}\varphi_{A,k}$.
Moreover, \cite{TamZal10} contains conditions under which $\varphi_{A,k}$ is continuous at some point or Lipschitz on some neighborhood of a point. These conditions depend on the classical definition $\operatorname*{inf}\emptyset=+\infty$ .
\smallskip

\section{Convex Functions with Uniform Level Sets}\label{sec-cx}

The previous sections contain many properties of $\varphi_{A,k}$ which are also essential if $\varphi_{A,k}$ is convex, which is just the case for $A$ being a convex set.
Let us now give further results for this special case. We first turn to convex cones $A$ before considering more general cases.

In many applications, the set $A$ in the definition of the functional $\varphi_{A,k}$ is a non-trivial convex cone since it is then closely related to the cone order (cp. Section \ref{sec-Monot}). As pointed out in \cite{foe02}, for functionals $\varphi _{A,k}$ used in the formulation of risk measures, $A$ is the so-called acceptance set and just the ordering cone in a function space $L^p$. This cone has an empty interior.

\begin{corollary}\label{c251a}
Assume that $Y$ is a topological vector space, $A\subset Y$ is a non-trivial closed convex cone and $k\in -A\setminus\{0\}$.
Then $(H1_{A,k})$ holds and $\varphi_{A,k}$ is a sublinear $(-A)$-monotone functional which is lower semicontinuous on its convex effective domain.
\begin{itemize}
\item[(a)] If $k\in (-A)\cap A$, then $\operatorname*{dom}\varphi_{A,k}=A$ and $\varphi_{A,k}$ does not attain any real value.
\item[(b)] If $k\in (-A) \setminus A$, then $\varphi_{A,k}$ is proper and strictly $(-\operatorname*{core} A)$-monotone.
\item[(c)] $\varphi_{A,k}$ is finite-valued iff $k\in -\operatorname*{core}A$.\\
$k\in -\operatorname*{core}A$ implies $\operatorname*{lev}\nolimits_{\varphi_{A,k},<}(t)=tk+ \operatorname*{core}A \quad \forall\, t \in \mathbb{R}.$
\item[(d)] If $k\in -\operatorname*{int} A$, then $(H2_{A,k})$ holds and $\varphi_{A,k}$ is continuous and finite-valued.\\
$(H2_{A,k})$ holds if and only if $k\in -\operatorname*{int} A$.
\end{itemize}
\end{corollary}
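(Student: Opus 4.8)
The plan is to reduce all four parts to the machinery already established for general closed sets, via the single structural fact that for a closed convex cone the recession cone is the set itself: $0^+A=A$. Indeed $0\in A$ and $A+A\subseteq A$ give $A\subseteq 0^+A$, while for $u\in 0^+A$ the choice $a=0$ forces $u=0+1\cdot u\in A$. Hence $k\in -A\setminus\{0\}=-0^+A\setminus\{0\}$ and, a non-trivial cone being a proper subset, $(H1_{A,k})$ holds; sublinearity is Theorem~\ref{t251}(j), $(-A)$-monotonicity is Corollary~\ref{t251M-recc}(a) with $C=-A=-0^+A$, and lower semicontinuity on the (convex) domain $\mathbb{R}k+A$ is part of Theorem~\ref{t251}. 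For part (a) I would first note that $k\in A$ and $-k\in A$ imply $tk\in A$ for every $t\in\mathbb{R}$, so $\mathbb{R}k+A\subseteq A+A\subseteq A$ and therefore $\operatorname*{dom}\varphi_{A,k}=\mathbb{R}k+A=A$; the same remark gives $y+\mathbb{R}k\subseteq A$ for each $y\in A$, whence Theorem~\ref{t251}(b) yields $\varphi_{A,k}(y)=-\infty$ on the whole domain, so no real value is attained (this also matches the forward statement Proposition~\ref{varphi_cx_proper}, since here $(-0^+A)\cap 0^+A=(-A)\cap A$).

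For part (b) I treat properness through Theorem~\ref{t251}(d), i.e.\ I show $A$ contains no line parallel to $k$: if $a+\mathbb{R}k\subseteq A$ then $\tfrac1t a+k\in A$ for all $t>0$ by the cone property, and letting $t\to\infty$ the closedness of $A$ gives $k\in A$, contradicting $k\notin A$. For strict $(-\operatorname*{core}A)$-monotonicity I would apply Theorem~\ref{t251M}(c) with $B=-\operatorname*{core}A$ and $F=\operatorname*{dom}\varphi_{A,k}$, on which $\varphi_{A,k}$ is finite because it is now proper; its hypothesis $A-(B\setminus\{0\})\subseteq\operatorname*{core}A$ reads $A+(\operatorname*{core}A\setminus\{0\})\subseteq\operatorname*{core}A$ and follows from the convex-cone identity $A+\operatorname*{core}A\subseteq\operatorname*{core}A$. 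The latter is the one genuinely new computation: the line-segment principle gives $\tfrac12 a+\tfrac12 c\in\operatorname*{core}A$ for $a\in A$, $c\in\operatorname*{core}A$, and $\operatorname*{core}A=2\operatorname*{core}A$ because a positive dilation fixes the cone and hence its algebraic interior, so $a+c=2(\tfrac12 a+\tfrac12 c)\in\operatorname*{core}A$. I expect this step to be the main obstacle, precisely because one must use the algebraic-interior version Theorem~\ref{t251M}(c) rather than the topological Corollary~\ref{t251M-recc}(b) (the two genuinely differ, cf.\ Example~\ref{ex-core-not-int}) and must separately check finiteness on the domain.

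For parts (c) and (d) I would translate the hypotheses through $0^+A=A$. In (c), the implication $k\in-\operatorname*{core}A\Rightarrow\varphi_{A,k}$ finite-valued, together with the level-set formula $\operatorname*{lev}_{\varphi_{A,k},<}(t)=tk+\operatorname*{core}A$, is exactly Proposition~\ref{k-core} read with $\operatorname*{core}0^+A=\operatorname*{core}A$; conversely, if $\varphi_{A,k}$ is finite-valued then $Y=\operatorname*{dom}\varphi_{A,k}=\mathbb{R}k+A$, and Corollary~\ref{cor_finite_Zal} applied to the cone $A$ and the element $-k\in A\setminus\{0\}$ gives $-k\in\operatorname*{core}A$, i.e.\ $k\in-\operatorname*{core}A$. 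In (d), $k\in-\operatorname*{int}A=-\operatorname*{int}0^+A$ is precisely $(H3_{A,k})$, so Proposition~\ref{prop-finite} supplies $(H2_{A,k})$ together with continuity and finite-valuedness; the converse is immediate, since $(H2_{A,k})$ means $A-\mathbb{R}_{>}k\subseteq\operatorname*{int}A$ and, evaluated at $0\in A$, already yields $-k\in\operatorname*{int}A$.
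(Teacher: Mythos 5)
Your proposal is correct and takes essentially the same route as the paper: it reduces everything via the fact $0^+A=A$ to Theorem~\ref{t251}, Theorem~\ref{t251M}(c), Corollary~\ref{cor_finite_Zal} and Proposition~\ref{prop-finite} (equivalently Theorem~\ref{t251}(h)/Proposition~\ref{k-core}), which is exactly how the paper argues. The only deviations are cosmetic: your properness argument in (b) is the contrapositive of the paper's neighborhood argument (both rescale the line $a+\mathbb{R}k$ toward $k$ and invoke closedness of $A$), and you spell out the inclusion $A+\operatorname*{core}A\subseteq\operatorname*{core}A$ that the paper leaves implicit in its bare citation of Theorem~\ref{t251M}.
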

\begin{proof} 
$(H1_{A,k})$ holds since $A+A\subseteq A$. By Theorem \ref{t251} and Theorem \ref{t251M}, $\varphi_{A,k}$ is a sublinear $(-A)$-monotone functional which is lower semicontinuous on its effective domain.
\begin{itemize}
\item[(a)] $k\in (-A)\cap A$ implies $a+\mathbb{R}k\subseteq A\;\forall a\in A$ and thus $\operatorname*{dom}\varphi_{A,k}=\mathbb{R}k+A=A$
and $\varphi_{A,k}(y)=-\infty\;\forall y\in \operatorname*{dom}\varphi_{A,k}$ by Theorem \ref{t251}.
\item[(b)] Consider some arbitrary $a\in A$.
Since $A$ is closed and $k\not\in A$, there exists some neighborhood $U$ of $k$ such that $U\subseteq Y\setminus A$.
$\Rightarrow\exists n\in\mathbb{N}_{>}:\;\;\; y:=k+\frac{1}{n}(a+k)\in U$ and thus $y\not\in A$.
$\Rightarrow ny=a+(n+1)k\not\in A$ since $A$ is a cone. Hence $\varphi_{A,k}$ is proper.
The monotonicity results from Theorem \ref{t251M}.
\item[(c)] The first sentence of (d) results from Corollary \ref{cor_finite_Zal} and (b), the second one from Theorem \ref{t251}(h).
\item[(d)] It is easy to verify that $(H2_{A,k})$ holds if and only if $k\in -\operatorname*{int} A$.
If $k\in -\operatorname*{int} A$, then $(H3_{A,k})$ holds and the assertion follows from Proposition \ref{prop-finite}.
\end{itemize}
\end{proof}

Further sufficient conditions for the continuity of $\varphi_{A,k}$ under the assumptions of Corollary \ref{c251a} will be given in Proposition \ref{Fact4}.

The functional $\varphi_{A,k}$ is often used locally in such a way that we attach some set to a point $y^0$ and use this set as $A$. Let us summarize properties of $\varphi_{y^0-C,k}$
for convex cones $C$.
\begin{corollary}\label{c251a-C}
Assume that $Y$ is a topological vector space, $C\subset Y$ is a non-trivial closed convex cone, $k\in C\setminus\{0\}$ and $y^0\in Y$.
Then $(H1_{y^0-C,k})$ holds and $\varphi_{y^0-C,k}$ is a convex $C$-monotone functional which is lower semicontinuous on its convex effective domain.
\begin{itemize}
\item[(a)] If $k\in C\cap (-C)$, then $\operatorname*{dom}\varphi_{y^0-C,k}=y^0-C$ and $\varphi_{y^0-C,k}$ does not attain any real value.
\item[(b)] If $k\in C \setminus (-C)$, then $\varphi_{y^0-C,k}$ is proper and strictly $(\operatorname*{core} C)$-monotone.
\item[(c)] If $k\in \operatorname*{core} C$, then $\varphi_{y^0-C,k}$ is finite-valued.
\item[(d)] If $k\in \operatorname*{int} C$, then 
$(H2_{y^0-C,k})$ holds and $\varphi_{y^0-C,k}$ is continuous, finite-valued and strictly $(\operatorname*{int} C)$-monotone.
\item[(e)] $\varphi_{y^0-C,k}$ is subadditive $\iff y^0\in -C$.
\item[(f)] $\varphi_{y^0-C,k}$ is sublinear $\iff y^0\in C\cap(-C)$.
\end{itemize}
\end{corollary}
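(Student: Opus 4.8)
The plan is to reduce everything to the already-proved cone-centered case by setting $A := -C$ and then translating by $y^0$. Since $C$ is a non-trivial closed convex cone and $k \in C \setminus \{0\} = -A \setminus \{0\}$, Corollary \ref{c251a} applies verbatim to $\varphi_{A,k} = \varphi_{-C,k}$, and because $y^0 - C = y^0 + A$ we have $\varphi_{y^0-C,k} = \varphi_{y^0+A,k}$. First I would invoke Proposition \ref{A-shift}: it yields $(H1_{y^0-C,k})$ from $(H1_{-C,k})$, gives $\operatorname*{dom}\varphi_{y^0-C,k} = y^0 + \operatorname*{dom}\varphi_{-C,k}$, and transfers properness, finite-valuedness, continuity, lower semicontinuity, convexity and ($B$-)monotonicity between the two functionals through the identity $\varphi_{y^0-C,k}(y) = \varphi_{-C,k}(y - y^0)$. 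The opening assertions then follow: convexity and lower semicontinuity on the convex effective domain come from Corollary \ref{c251a}, while $C$-monotonicity follows either from the $(-A)$-monotonicity $= C$-monotonicity of $\varphi_{-C,k}$ in Corollary \ref{c251a}, or directly from Corollary \ref{t251M-recc}(a) upon noting that $-0^+(y^0-C) = -0^+(-C) = C$.

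Next I would dispatch (a)--(d) by matching hypotheses, since under the identification $A = -C$ the four conditions read $k \in (-A) \cap A$, $k \in (-A) \setminus A$, $k \in -\operatorname*{core} A$ and $k \in -\operatorname*{int} A$, respectively. For (a), Corollary \ref{c251a}(a) gives $\operatorname*{dom}\varphi_{-C,k} = -C$ and that no real value is attained; translating by $y^0$ yields $\operatorname*{dom}\varphi_{y^0-C,k} = y^0 - C$ and, via the functional identity, that $\varphi_{y^0-C,k}$ attains no real value. For (b) and (c), Corollary \ref{c251a}(b),(c) furnish properness together with strict $(\operatorname*{core} C)$-monotonicity, and finite-valuedness, which all transfer. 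For (d), I would observe that $(H2)$ is translation-invariant, since $(y^0 - C) - \mathbb{R}_{>} k = y^0 + ((-C) - \mathbb{R}_{>} k)$ and $\operatorname*{int}(y^0 - C) = y^0 + \operatorname*{int}(-C)$, so $(H2_{y^0-C,k})$ is equivalent to $(H2_{-C,k})$, which holds by Corollary \ref{c251a}(d); continuity and finite-valuedness transfer as before. For the strict $(\operatorname*{int} C)$-monotonicity in (d), finite-valuedness gives properness, and since $\operatorname*{int} C \subseteq C = -0^+(y^0 - C)$, Corollary \ref{t251M-recc}(b) applies directly.

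Finally, parts (e) and (f) carry the only genuine computation, and I would treat them with $A' := y^0 - C$ and the characterizations in Theorem \ref{t251}. By Theorem \ref{t251}(i), $\varphi_{y^0-C,k}$ is subadditive iff $A' + A' \subseteq A'$; using $C + C = C$ for the convex cone $C$ one gets $A' + A' = 2y^0 - C$, and $2y^0 - C \subseteq y^0 - C$ simplifies to $y^0 - C \subseteq -C$, i.e. $y^0 \in -C$, which is (e). For (f), $\varphi_{y^0-C,k}$ is always convex, so by Theorem \ref{t251}(j),(h) sublinearity is equivalent to $A'$ being a cone, i.e. to $(1-\lambda)y^0 + \lambda c \in C$ for all $\lambda \ge 0$ and $c \in C$. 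Taking $\lambda = 0$ forces $y^0 \in C$ and $\lambda = 2, c = 0$ forces $-y^0 \in C$, while conversely $y^0 \in C \cap (-C)$ makes $(1-\lambda)y^0 + \lambda c \in C$ for every $\lambda \ge 0$ and $c \in C$; hence sublinearity holds iff $y^0 \in C \cap (-C)$. The main obstacle is precisely this cone-arithmetic in (e) and (f); everything preceding it is routine transfer through Corollary \ref{c251a} and Proposition \ref{A-shift}.
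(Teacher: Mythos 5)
Your proposal is correct and follows essentially the same route as the paper: apply Corollary \ref{c251a} with $A=-C$, transfer (a)--(d) through Proposition \ref{A-shift}, and settle (e) and (f) by direct cone arithmetic using $C+C=C$. You are in fact more explicit than the paper on two points it leaves implicit --- the translation invariance of $(H2)$ and the strict $(\operatorname*{int}C)$-monotonicity in (d), which you obtain cleanly via Corollary \ref{t251M-recc}(b), while in (f) your $\lambda=0$, $\lambda=2$ check is an equivalent variant of the paper's reduction of ``$y^0-C$ is a convex cone'' to $0\in y^0-C$ together with part (e).
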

\begin{proof} 
Apply Corollary \ref{c251a} to $A=-C$. (a)-(d) follow from Proposition \ref{A-shift}.
\begin{itemize}
\item[(e)] $\varphi_{y^0-C,k}$ is subadditive $\Leftrightarrow (y^0-C)+(y^0-C)\subseteq y^0-C \Leftrightarrow y^0+(y^0-C)\subseteq y^0-C$ since $C+C=C$. Hence  $\varphi_{y^0-C,k}$ is subadditive $\Leftrightarrow y^0-C\subseteq -C \Leftrightarrow y^0\in -C$.
\item[(f)] Since $y^0-C$ is convex, we have:
$\;y^0-C$ is a convex cone $\Leftrightarrow (0\in y^0-C$ and $(y^0-C)+(y^0-C)\subseteq y^0-C)$ $\Leftrightarrow (y^0\in C$ and $y^0\in -C)$ because of (e).
\end{itemize}
\end{proof}

If $\varphi_{A,k}$ is convex, then it is proper or does not attain any real value.

\begin{proposition}\label{varphi_cx_proper}
Assume $(H1_{A,k})$.
\begin{itemize}
\item[(a)] If $k\in (-0^+A)\cap 0^+A$, then $\varphi_{A,k}$ does not attain any real value.
\item[(b)] If $k\in (-0^+A)\setminus 0^+A$ and $A$ is convex, then $\varphi_{A,k}$ is proper.
\end{itemize}
\end{proposition}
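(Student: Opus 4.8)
The plan is to reduce both parts to the characterization of the value $-\infty$ given in Theorem~\ref{t251}(b), namely $\varphi_{A,k}(y)=-\infty \iff y+\mathbb{R}k\subseteq A$, together with the properness criterion of Theorem~\ref{t251}(d). Part (a) will turn out to be purely a recession-cone computation, while part (b) will require a genuine convexity argument.

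For part (a), first I would record that the hypothesis $k\in(-0^+A)\cap 0^+A$ means $k\in 0^+A$ \emph{and} $-k\in 0^+A$. By the definition of the recession cone this gives $a+tk\in A$ and $a-tk\in A$ for every $a\in A$ and every $t\in\mathbb{R}_+$, hence $a+\mathbb{R}k\subseteq A$ for all $a\in A$. Now suppose toward a contradiction that $\varphi_{A,k}(y)=t\in\mathbb{R}$ for some $y\in Y$. Then (\ref{f-r252n}) yields $y\in tk+A$, so $a:=y-tk\in A$, whence $y+\mathbb{R}k=a+\mathbb{R}k\subseteq A$; by Theorem~\ref{t251}(b) this forces $\varphi_{A,k}(y)=-\infty$, contradicting $t\in\mathbb{R}$. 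Thus $\varphi_{A,k}$ attains no real value. (Equivalently, $\varphi_{A,k}(a)=-\infty$ on $A$ by Theorem~\ref{t251}(b), and translation invariance (\ref{f-r255nn}) propagates this to all of $\operatorname*{dom}\varphi_{A,k}=\mathbb{R}k+A$.)

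For part (b), since $\operatorname*{dom}\varphi_{A,k}=\mathbb{R}k+A\neq\emptyset$ by (\ref{dom_Ak}), properness amounts to showing that $\varphi_{A,k}$ never takes the value $-\infty$; by Theorem~\ref{t251}(b) it suffices to prove that no line $y+\mathbb{R}k$ lies in $A$, i.e.\ condition (\ref{r255nx-inA}). So I would argue by contradiction: assume some $y\in Y$ has $y+\mathbb{R}k\subseteq A$ and deduce the impossible conclusion $k\in 0^+A$. Fix an arbitrary $b\in A$ and $s\in\mathbb{R}_+$. For $\lambda\in(0,1)$ the point $y+\tfrac{s}{\lambda}k$ lies in $A$, so by convexity of $A$,
\[
p_\lambda:=\lambda\Bigl(y+\tfrac{s}{\lambda}k\Bigr)+(1-\lambda)b=b+\lambda(y-b)+sk\in A.
\]
Letting $\lambda\to 0^+$ gives $p_\lambda\to b+sk$, and since $A$ is closed we obtain $b+sk\in A$. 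As $b\in A$ and $s\in\mathbb{R}_+$ were arbitrary, this says exactly $k\in 0^+A$, contradicting $k\notin 0^+A$. Hence (\ref{r255nx-inA}) holds and $\varphi_{A,k}$ is proper by Theorem~\ref{t251}(d).

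The routine portions are the invocations of Theorem~\ref{t251}; the one genuinely substantive step is the convexity-plus-closedness argument in part (b) showing that a single line of $A$ in direction $k$ already propagates, through every point $b\in A$, to the whole ray $b+\mathbb{R}_+k$ and therefore to $k\in 0^+A$. The main obstacle to watch is the limit $\lambda\to 0^+$: it exploits that $A$ is closed and that $y+\tfrac{s}{\lambda}k$ stays in $A$ for arbitrarily large $\tfrac{s}{\lambda}$ (which only uses the forward ray $y+\mathbb{R}_+k\subseteq A$), so convexity of $A$ is essential and cannot be dropped. Note that part (b) genuinely needs convexity, whereas part (a) does not.
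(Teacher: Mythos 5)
Your proposal is correct, and the two parts relate to the paper's proof differently. For part (b) you have essentially reproduced the paper's argument: the paper also argues by contradiction, takes a line $a^0+\mathbb{R}k\subseteq A$ obtained from Theorem~\ref{t251}(d), forms the convex combinations $(1-\tfrac1n)a+\tfrac1n(a^0+ntk)\in A$, lets $n\to\infty$, and uses closedness of $A$ to conclude $a+tk\in\operatorname*{cl}A=A$ and hence $k\in 0^+A$; your continuous parameter $\lambda\in(0,1)$ with $p_\lambda=b+\lambda(y-b)+sk$ is the same computation with $\lambda=\tfrac1n$, and your remark that only the forward ray is needed is accurate. For part (a), however, you take a genuinely different and more elementary route. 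The paper first observes that $(H1_{A,k})$ implies $(H1_{0^+A,k})$, applies Corollary~\ref{c251a}(a) to the closed convex cone $0^+A$ to get $\varphi_{0^+A,k}(0)=-\infty$, and then feeds this into the inequality $\varphi_{A,k}(y^0+y^1)\le\varphi_{A,k}(y^0)+\varphi_{0^+A,k}(y^1)$ of Proposition~\ref{p-varphi_rec} with $y^1=0$, so that any real value of $\varphi_{A,k}(y^0)$ would be $\le-\infty$. You instead argue directly: $\pm k\in 0^+A$ forces $a+\mathbb{R}k\subseteq A$ for every $a\in A$, and if $\varphi_{A,k}(y)=t\in\mathbb{R}$ then $y-tk\in A$ by (\ref{f-r252n}), so $y+\mathbb{R}k\subseteq A$ and Theorem~\ref{t251}(b) yields the contradiction $\varphi_{A,k}(y)=-\infty$. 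Your version is shorter and self-contained, avoiding the need to verify the hypotheses of Corollary~\ref{c251a} for $0^+A$ (closedness, convexity and non-triviality of the recession cone), while the paper's version buys a conceptual dividend: it exhibits part (a) as an instance of the recurring theme linking $\varphi_{A,k}$ to the sublinear function $\varphi_{0^+A,k}$, which the paper exploits again in Proposition~\ref{k-core} and Theorem~\ref{th-Lip-varphi}. Both arguments are sound.
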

\begin{proof}
\begin{itemize}
\item[]
\item[(a)] $(H1_{A,k})$ implies $(H1_{0^+A,k})$. Applying Corollary \ref{c251a} to $0^+A$ instead of $A$ results in $\varphi_{0^+A,k}(0)=-\infty$. The assertion follows from the inequality in Proposition \ref{p-varphi_rec}
 with $y^1=0$.
\item[(b)] Assume that $k\in -0^+A$, $A$ is convex, but $\varphi_{A,k}$ is not proper.
$\Rightarrow\exists a^0\in A:\;a^0+\mathbb{R}k\subseteq A$. Consider arbitrary elements $a\in A$, $t\in\mathbb{R}_{>}$.
$\;a^1:=a+tk$. Let $U$ be an arbitrary neighborhood of $a^1$. $\Rightarrow \exists n\in\mathbb{N}_{>}:\; y:=a^1+\frac{1}{n}(a^0-a)\in U$.
$ y= a+tk+\frac{1}{n}(a^0-a)=(1-\frac{1}{n})a+\frac{1}{n}(a^0+ntk)\in A$ since $A$ is convex. Thus $a^1\in\operatorname*{cl}A=A$ and $a+\mathbb{R}_+ k\subseteq A\;\forall a\in A$. $\Rightarrow k\in 0^+A$. 
\end{itemize} 
\end{proof}

Note that the recession cone of each nonempty closed convex unbounded set in a finite dimensional separated topological vector space does not contain only the zero vector \cite{Zal:02}. But Z\u{a}linescu \cite[Example 1.1.1]{Zal:02} gave an example for an unbounded closed convex set $A$ in $\ell ^p$, $p\in [1,\infty]$, with $0^+A=\{0\}$.

\begin{proposition}\label{Fact4}
Assume that $Y$ is a topological vector space, $A$ is a proper closed convex subset of $\;Y$ and $k\in (-0^+A)\setminus 0^+A$.\\
Then $(H1_{A,k})$ is fulfilled and the following statements are valid.
\begin{itemize}
\item[(a)] $\varphi _{A,k}$ is convex, proper and lower semicontinuous on $\operatorname*{dom}\varphi _{A,k}$.
\item[(b)] $\operatorname*{dom}\varphi _{A,k}=A+\mathbb{R}\cdot k$ is convex.
\item[(c)] $\varphi _{A,k}$ is finite-valued iff $\;Y=\operatorname*{dom}\varphi _{A,k}$. 
\item[(d)] $\operatorname*{dom}\varphi _{A,k}=\operatorname*{core}\operatorname*{dom}\varphi _{A,k}$ iff $A-\mathbb{R}_{>}\cdot k\subseteq \operatorname*{core} \;A$.
\item[(e)] If $\operatorname*{int}A\not=\emptyset$ or $Y=\mathbb{R}^n$, then $(H2_{A,k})$ holds iff
$\operatorname*{dom}\varphi _{A,k}$ is open.
\item[(f)] If $\operatorname*{int}A\not=\emptyset$, then $\varphi _{A,k}$ is continuous on $\operatorname*{int}\operatorname*{dom}\varphi _{A,k}$.
\item[(g)] If $\;Y$ is a Banach space, then $\varphi_{A,k}$ is locally Lipschitz on $\operatorname*{int}\operatorname*{dom}\varphi_{A,k}$.
\end{itemize}
\end{proposition}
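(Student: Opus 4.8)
The plan is to prove the statements roughly in the order (a)–(g), leaning heavily on the earlier results that already cover the case $\operatorname*{int}A\neq\emptyset$ and reducing the remaining work to an infinite-dimensional local-boundedness argument for part (g). First I would verify $(H1_{A,k})$ from the hypotheses: since $k\in -0^+A$ and $A$ is closed and proper, $(H1_{A,k})$ holds directly. Part (a) then splits into two pieces — convexity and lower semicontinuity follow from Theorem \ref{t251}(g) (since $A$ is convex) together with the lower-semicontinuity statement in Theorem \ref{t251}; properness is exactly Proposition \ref{varphi_cx_proper}(b), which applies because $k\in(-0^+A)\setminus 0^+A$ and $A$ is convex. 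Parts (b), (c) and (d) are then essentially bookkeeping: (b) is $\operatorname*{dom}\varphi_{A,k}=\mathbb{R}k+A$ from (\ref{dom_Ak}), convex because $A$ is; (c) is the reformulation of Theorem \ref{t251}(f) once we know $\varphi_{A,k}$ is proper; and (d) is the combination of the implication ``(\ref{in_core}) $\Rightarrow \operatorname*{dom}\varphi_{A,k}=\operatorname*{core}\operatorname*{dom}\varphi_{A,k}$'' from Theorem \ref{t251}(a) together with its converse, which for convex $A$ follows from the fact that $A-\mathbb{R}_{>}k\subseteq\operatorname*{dom}\varphi_{A,k}$ and that the algebraic interior of a convex set consists of points through which every line segment can be slightly extended.

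For (e) I would argue both implications. If $(H2_{A,k})$ holds then $\operatorname*{dom}\varphi_{A,k}$ is open by Theorem \ref{prop-funcII} (specifically (\ref{f-domint})). For the converse, assume $\operatorname*{dom}\varphi_{A,k}=\mathbb{R}k+A$ is open. When $\operatorname*{int}A\neq\emptyset$, openness of the domain together with convexity forces $A-\mathbb{R}_{>}k\subseteq\operatorname*{int}A$: any $a\in A$ shifted by $-\lambda k$ lies in the open domain, and a standard convexity argument (combining the interior point of $A$ with the line direction $k$) places $a-\lambda k$ in $\operatorname*{int}A$. When $Y=\mathbb{R}^n$, I would instead invoke the finite-dimensional fact that a nonempty convex set has nonempty relative interior, handling separately the case where $A$ has empty interior — there $\operatorname*{dom}\varphi_{A,k}$ lies in a proper affine subspace and cannot be open unless the whole construction degenerates, so openness of the domain already forces $\operatorname*{int}A\neq\emptyset$, reducing to the previous case. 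Part (f) then follows because under $\operatorname*{int}A\neq\emptyset$ we have $\operatorname*{int}\operatorname*{dom}\varphi_{A,k}=\mathbb{R}k+\operatorname*{int}A$, and on this open set $A':=\operatorname*{cl}(\operatorname*{int}A)$ satisfies $(H2_{A',k})$, so Theorem \ref{prop-funcII} yields continuity; alternatively, a convex proper function is continuous on the interior of its domain whenever it is bounded above on a neighborhood of some interior point, which holds because any interior point of $\operatorname*{dom}\varphi_{A,k}$ has a $k$-translate lying in $\operatorname*{int}A$.

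The hard part is (g): local Lipschitz continuity on $\operatorname*{int}\operatorname*{dom}\varphi_{A,k}$ in a Banach space, where $A$ may have empty interior so that $(H2_{A,k})$ and Theorem \ref{prop-funcII} are unavailable. The plan is to use the classical Banach-space principle that a proper convex lower semicontinuous function is locally Lipschitz on the interior of its domain. The lower semicontinuity and properness come from (a); convexity from (a); and the interior of the domain is where we work. The key mechanism is that a proper convex lower semicontinuous function on a Banach space is continuous — indeed locally Lipschitz — at every point of $\operatorname*{int}\operatorname*{dom}$, via the Baire category argument: the sublevel sets are closed and convex, their union is the (second-category) domain, so some sublevel set has nonempty interior, giving local boundedness above, whence local Lipschitz continuity by convexity. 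The main obstacle to watch is that this standard theorem is usually stated for functions into $(-\infty,+\infty]$, whereas here $\varphi_{A,k}$ may take the value $-\infty$; but properness (from (a)) precisely rules this out on $\operatorname*{dom}\varphi_{A,k}$, so on the interior of the domain the function is finite and the classical argument applies verbatim. I would therefore cite the convex-analysis fact (e.g.\ from \cite{Zal:02}) that a proper lower semicontinuous convex function on a Banach space is locally Lipschitz on the interior of its effective domain, and conclude.
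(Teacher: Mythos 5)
Your parts (a)--(c) and (g) coincide with the paper's proof: (a) via Theorem \ref{t251} and Proposition \ref{varphi_cx_proper}(b), (b)--(c) as bookkeeping, and (g) by the classical fact that a proper lower semicontinuous convex function on a Banach space is locally Lipschitz on the interior of its domain -- the paper simply cites \cite{BorVan10} (Propositions 4.1.4 and 4.1.5) where you sketch the Baire-category proof, and your observation that properness disposes of the value $-\infty$ is the right way to reconcile the $\overline{\mathbb{R}}_{\nu}$-setting with the classical theorem. The genuine gap is in the converse of (d), which is the technical heart of the paper's proof and which your (e) also silently depends on. From $y^1=a^1-t_1k\in A-\mathbb{R}_{>}k\subseteq\operatorname*{dom}\varphi_{A,k}=\operatorname*{core}\operatorname*{dom}\varphi_{A,k}$ you only get, for each direction $y-y^1$, some $t_2>0$ with $y^2:=y^1+t_2(y-y^1)\in A+\mathbb{R}k$ -- membership in $A+\mathbb{R}k$, \emph{not} in $A$. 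The ``fact'' you invoke about algebraic interiors of convex sets is essentially the definition of core and does not bridge this. The missing step is the conversion: writing $y^2=a^2+t_3k$ with $t_3>0$, the slack $t_1$ of $y^1$ in the $-k$ direction absorbs the $k$-component, since $y^1+\tfrac{t_1}{t_1+t_3}(y^2-y^1)=a^1+\tfrac{t_1}{t_1+t_3}(a^2-a^1)\in A$ by convexity, whence $y^1\in\operatorname*{core}A$. The unspecified ``standard convexity argument'' in your (e) must contain this same computation; the paper instead reduces (e) to (d) via $\operatorname*{core}A=\operatorname*{int}A$ (valid for convex sets with nonempty interior, and in $\mathbb{R}^n$), which is cleaner than your case analysis, though your degenerate case does work out: if $\operatorname*{int}A=\emptyset$ in $\mathbb{R}^n$ and $\operatorname*{dom}\varphi_{A,k}$ were open, $A$ would have to be an affine hyperplane, whose recession cone is a linear subspace, contradicting $k\in(-0^+A)\setminus 0^+A$.

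Your first route for (f) is wrong: for a closed convex set with nonempty interior one has $\operatorname*{cl}(\operatorname*{int}A)=A$, so $A':=\operatorname*{cl}(\operatorname*{int}A)$ equals $A$ and $(H2_{A',k})$ is literally $(H2_{A,k})$, which the hypotheses do not provide -- Example \ref{ex-Lip-bd} ($A=-\mathbb{R}_{+}^2$, $k=(1,0)^T$) satisfies every assumption of Proposition \ref{Fact4} with $\operatorname*{int}A\neq\emptyset$, yet $A-\mathbb{R}_{>}k\not\subseteq\operatorname*{int}A$. Fortunately your stated alternative -- $\varphi_{A,k}\leq 0$ on the nonempty open set $\operatorname*{int}A$ by (\ref{f-r252n}), hence continuity on $\operatorname*{int}\operatorname*{dom}\varphi_{A,k}$ by the theorem on convex functions bounded above on a neighborhood of a point -- is exactly the paper's proof (it cites \cite[Theorem 5.43]{AliBor06}), so (f) stands once the $(H2_{A',k})$ detour is deleted; the identity $\operatorname*{int}\operatorname*{dom}\varphi_{A,k}=\mathbb{R}k+\operatorname*{int}A$ you assert along the way is true but itself requires a short segment argument, and is not needed for this route.
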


\begin{proof}
\begin{itemize}
\item[]
\item[(a)] $\varphi _{A,k}$ is convex and lower semicontinuous on $\operatorname*{dom}\varphi _{A,k}$ by Theorem \ref{t251}, proper by Proposition \ref{varphi_cx_proper}.
\item[(b)] $\operatorname*{dom}\varphi _{A,k}=A+\mathbb{R}\cdot k$ is convex since $A$ is convex.
\item[(c)] results from (a).
\item[(d)]
Assume $\operatorname*{dom}\varphi _{A,k}=\operatorname*{core}\operatorname*{dom}\varphi _{A,k}$, and consider some arbitrary $y^1\in A-\mathbb{R}_{>}\cdot k$ and $y\in Y$.
$\Rightarrow y^1\in A$ since $k\in -0^+A$, and $\exists\; a^1\in A, t_1\in\mathbb{R}_{>}:\; y^1=a^1-t_1k$.
$y^1\in A+\mathbb{R}\cdot k=\operatorname*{core}( A+\mathbb{R}\cdot k)$. $\Rightarrow \exists\; t_2\in\mathbb{R}_{>}:\; y^2:=y^1+t_2(y-y^1)\in A+\mathbb{R}\cdot k$.
If $y^2\in A$, then the convexity of $A$ implies $y^1+t(y-y^1)\in A\;\forall\; t\in \mathbb{R}_{+}$ with $t\leq t_2$.
Suppose now $y^2\not\in A$. $\Rightarrow \exists\; a^2\in A, t_3\in\mathbb{R}_{>}:\; y^2=a^2+t_3k$.
$\Rightarrow y^1+\frac{t_1t_2}{t_1+t_3}(y-y^1)=y^1+\frac{t_1}{t_1+t_3}(y^2-y^1)=
a^1-t_1k+\frac{t_1}{t_1+t_3}(a^2+t_3k-(a^1-t_1k))=a^1+\frac{t_1}{t_1+t_3}(a^2-a^1)\in A$ since $A$ is convex.
Because of $y^1\in A$, we get $y^1+t(y-y^1)\in A\;\forall\; t\in \mathbb{R}_{+}$ with $t\leq \frac{t_1t_2}{t_1+t_3}$. Thus $y^1\in \operatorname*{core}A$.\\
The reverse implication of (d) follows from Theorem \ref{t251} (a). 
\item[(e)] The assumptions of (e) imply $\operatorname*{int}A = \operatorname*{core}A$ (see \cite{Hol75}, \cite{BorVan10})
and $\operatorname*{int}\operatorname*{dom}\varphi _{A,k} = \operatorname*{core}\operatorname*{dom}\varphi _{A,k}$.
Hence (d) implies (e). 
\item[(f)] If $\operatorname*{int}A\not=\emptyset$, then $\operatorname*{int}\operatorname*{dom}\varphi_{A,k}\not=\emptyset$ and $\varphi _{A,k}$ is bounded above by zero on $\operatorname*{int}A$ because of (\ref{f-r252n}). Thus $\varphi _{A,k}$ is continuous on $\operatorname*{int}\operatorname*{dom}\varphi _{A,k}$ by \cite[Theorem 5.43]{AliBor06}. 
\item[(g)] follows from Proposition 4.1.4 and Proposition 4.1.5 in \cite{BorVan10} since $\varphi _{A,k}$ is convex and lower semicontinuous on $\operatorname*{dom}\varphi _{A,k}$.
\end{itemize}
\end{proof}

Tammer and Z{\u{a}}linescu \cite[Example 3.6]{TamZal10} gave an example of a non-trivial closed convex cone $A$ in $\mathbb{R}^3$ and $k\in (-A)\setminus A$ for which $\varphi_{A,k}$ is not Lipschitz on $\operatorname*{int}\operatorname*{dom}\varphi_{A,k}$. 
Example \ref{ex1} illustrates that the statements (c)-(g) of Proposition \ref{Fact4} are not valid for an arbitrary non-convex set $A$ which fulfills $(H1_{A,k})$.

We get from Proposition \ref{Fact4} the following statement which was also proved in \cite{GopRiaTamZal:03}.

\begin{corollary}\label{corAII}
Assume
that $Y$ is a topological vector space, $A$ is a proper closed convex subset of $\;Y$, $\operatorname*{int}\; A\neq \emptyset$, $k\in -0^+A$ and $Y=\mathbb{R}k+A$.\\
Then $(H2_{A,k})$ holds and $\varphi_{A,k}$ is convex, continuous and finite-valued.
\end{corollary}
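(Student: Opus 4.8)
The plan is to reduce everything to Proposition \ref{Fact4}, whose hypotheses are exactly those assumed here except that it demands the sharper condition $k\in(-0^+A)\setminus 0^+A$ in place of $k\in -0^+A$. So the only real work is to upgrade the given membership $k\in -0^+A$ to $k\in(-0^+A)\setminus 0^+A$; once that is done, all four assertions can be read off directly from Proposition \ref{Fact4}.

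First I would record that $k\neq 0$: if $k=0$, then $\mathbb{R}k+A=A$, so the hypothesis $Y=\mathbb{R}k+A$ would force $Y=A$, contradicting that $A$ is a proper subset of $Y$. In particular $(H1_{A,k})$ is satisfied. The key step is then showing $k\notin 0^+A$. Suppose to the contrary that $k\in 0^+A$. Combined with $k\in -0^+A$, this means $k,-k\in 0^+A$, so by the definition of the recession cone, for every $a\in A$ we have $a+tk\in A$ for all $t\ge 0$ and $a-tk=a+(-t)k\in A$ for all $t\ge 0$; hence $a+tk\in A$ for every $t\in\mathbb{R}$. Thus $A+\mathbb{R}k=A$, and consequently $Y=\mathbb{R}k+A=A$, again contradicting the properness of $A$. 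Therefore $k\in(-0^+A)\setminus 0^+A$, so Proposition \ref{Fact4} applies. Note that no convexity of $A$ is needed for this step, only the definition of $0^+A$.

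Finally I would harvest the conclusions. Proposition \ref{Fact4}(a) gives that $\varphi_{A,k}$ is convex (and proper, lower semicontinuous on its domain). Proposition \ref{Fact4}(b) identifies $\operatorname*{dom}\varphi_{A,k}=A+\mathbb{R}k$, which equals $Y$ by hypothesis; hence Proposition \ref{Fact4}(c) yields that $\varphi_{A,k}$ is finite-valued. Since $\operatorname*{int}A\neq\emptyset$ and $\operatorname*{dom}\varphi_{A,k}=Y$ is open, Proposition \ref{Fact4}(e) delivers $(H2_{A,k})$, while Proposition \ref{Fact4}(f) gives continuity of $\varphi_{A,k}$ on $\operatorname*{int}\operatorname*{dom}\varphi_{A,k}=\operatorname*{int}Y=Y$, i.e. continuity everywhere on $Y$. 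This establishes all claims.

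The main obstacle is the middle step, ruling out the lineality case $k\in 0^+A$: this is precisely where the properness of $A$ must be combined with the covering hypothesis $Y=\mathbb{R}k+A$, and it is what converts the weaker membership $k\in -0^+A$ into the strict membership required by Proposition \ref{Fact4}. Once that reduction is in place, the remaining assertions are purely a matter of quoting the appropriate parts of that proposition, with the simplification that here $\operatorname*{dom}\varphi_{A,k}$ coincides with the whole space $Y$.
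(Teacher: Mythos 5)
Your proposal is correct and takes essentially the same route as the paper, which also obtains Corollary \ref{corAII} directly from Proposition \ref{Fact4} after observing (exactly as you do) that the hypotheses force $k\in(-0^+A)\setminus 0^+A$, since $k\in 0^+A\cap(-0^+A)$ together with $Y=\mathbb{R}k+A$ would yield $Y=A$, contradicting the properness of $A$. Your subsequent harvesting of convexity, finite-valuedness, $(H2_{A,k})$ and continuity from parts (a)--(f) of that proposition matches the intended derivation.
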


The assumptions of Corollary \ref{corAII} yield $k\in (-0^+A)\setminus 0^+A$ since otherwise $Y=A$.

Proposition \ref{prop-finite} implies a necessary condition for subdifferentials of $\varphi_{A,k}$.
\begin{proposition}\label{p-subdiff}
Assume that $Y$ is a separated locally convex space, $A$ a proper closed subset of $\;Y$ and $k\in -\operatorname*{int}0^+A$. Then $\varphi_{A,k}$
is a finite-valued continuous convex function, $\varphi_{0^+A,k}$
is a finite-valued continuous sublinear function, and we get for each $\overline{y}\in Y$:
$$y^{\ast }\in \partial\varphi_{A,k}(\overline{y})\implies y^{\ast }(y-\overline{y})\leq \varphi_{0^+A,k}(y-\overline{y})\;\forall y\in Y.$$
\end{proposition}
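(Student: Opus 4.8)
The plan is to reduce the whole statement to the sublinear estimate (\ref{vor_sublin_dom}), which becomes available the moment one recognizes that the hypothesis $k\in-\operatorname*{int}0^+A$ is literally condition $(H3_{A,k})$. First I would invoke Proposition \ref{prop-finite}: it tells us at once that $(H2_{A,k})$ holds, that $\varphi_{A,k}$ is continuous and finite-valued, that $(H3_{0^+A,k})$ holds, and that
\[
\varphi_{A,k}(y^0)-\varphi_{A,k}(y^1)\le \varphi_{0^+A,k}(y^0-y^1)\qquad\forall\,y^0,y^1\in Y,
\]
namely the inequality labelled (\ref{vor_sublin_dom}). This single inequality carries essentially all the content of the proposition.

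Next I would record the stated regularity of the two functionals. The recession cone $0^+A$ is always a convex cone, so $(H1_{0^+A,k})$ is in force and Theorem \ref{t251}(j) makes $\varphi_{0^+A,k}$ sublinear; applying Proposition \ref{prop-finite} to $0^+A$ in place of $A$ (legitimate precisely because $(H3_{0^+A,k})$ holds) shows that $\varphi_{0^+A,k}$ is in addition finite-valued and continuous. For $\varphi_{A,k}$, finiteness and continuity come directly from Proposition \ref{prop-finite}, while convexity is Theorem \ref{t251}(g), which needs $A$ convex; since this proposition sits in the section on convex functions, that is the intended setting, and I would make the convexity of $A$ explicit (in any case the subgradient inequality below never uses it).

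The heart of the argument is short. Fix $\overline{y}\in Y$ and take $y^{\ast}\in\partial\varphi_{A,k}(\overline{y})$. By the defining property of the subdifferential,
\[
y^{\ast}(y-\overline{y})\le \varphi_{A,k}(y)-\varphi_{A,k}(\overline{y})\qquad\forall\,y\in Y .
\]
I would then bound the right-hand side from above by applying (\ref{vor_sublin_dom}) with $y^0=y$ and $y^1=\overline{y}$, giving $\varphi_{A,k}(y)-\varphi_{A,k}(\overline{y})\le\varphi_{0^+A,k}(y-\overline{y})$. Chaining the two estimates yields exactly $y^{\ast}(y-\overline{y})\le\varphi_{0^+A,k}(y-\overline{y})$ for every $y\in Y$, which is the assertion.

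There is no genuine analytic obstacle: the entire difficulty has already been discharged in establishing (\ref{vor_sublin_dom}), and the closing step is a one-line combination with the defining inequality of a subgradient. The only points requiring care are bookkeeping ones — verifying that $(H3_{A,k})$ legitimately triggers Proposition \ref{prop-finite} both for $A$ and, via $(H3_{0^+A,k})$, for $0^+A$ — together with the observation that the subgradient estimate itself is independent of the convexity of $\varphi_{A,k}$, so it remains valid for whatever subgradients exist at $\overline{y}$.
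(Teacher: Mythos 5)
Your proof is correct and is exactly the argument the paper intends: the paper gives no separate proof of Proposition \ref{p-subdiff}, stating only that it is implied by Proposition \ref{prop-finite}, which is precisely your chain of the subgradient inequality $y^{\ast}(y-\overline{y})\le\varphi_{A,k}(y)-\varphi_{A,k}(\overline{y})$ with (\ref{vor_sublin_dom}) applied to $y^0=y$, $y^1=\overline{y}$, plus the regularity statements read off from Proposition \ref{prop-finite} and Theorem \ref{t251}. Your side remark is also accurate: the convexity claim for $\varphi_{A,k}$ tacitly requires $A$ convex (by Theorem \ref{t251}(g), and the paper's own example shows $(H3_{A,k})$ alone does not force it), while the subgradient estimate itself never uses convexity.
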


Note that, by Proposition \ref{A-shift}, $\varphi_{0^+A,k}(y-\overline{y})=\varphi_{\overline{y}+0^+A,k}(y)$.

A formula for the conjugate of $\varphi _{A,k}$ is given in \cite{RocUryZab02} and \cite{RusSha06}, further statements about the subdifferential of $\varphi _{A,k}$ have been proved by Durea and Tammer \cite{DT}.
\smallskip

\section{Functionals with uniform level sets, the Minkowski functional and norms}\label{s-Mink-lev}

Functions with uniform level sets which are generated by cones often coincide with a Minkowski functional on a subset of the space. Let $p_A$ denote the Minkowski functional generated by a set $A$ in a linear space.

For the proofs of the following propositions, we need two lemmata from \cite{AliBor06}.

\begin{lemma}\label{l-Mink-semi}
Assume that $p:Y\rightarrow\mathbb{R}$ is a nonnegative function on a linear space $Y$.\\
$p$ is a seminorm if and only if it is the Minkowski functional of a balanced convex absorbing set $A\subseteq Y$.
\end{lemma}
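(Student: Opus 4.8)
The plan is to prove the equivalence directly from the definition of the Minkowski functional, $p_A(x)=\inf\{t>0\mid x\in tA\}$, treating the two implications separately.

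For the direction that the Minkowski functional of a balanced convex absorbing set $A$ is a seminorm, I would first use that $A$ is absorbing to guarantee that, for every $x\in Y$, the set $\{t>0\mid x\in tA\}$ is nonempty, so $p_A$ is real-valued and, being an infimum of positive numbers, nonnegative. Positive homogeneity $p_A(\lambda x)=\lambda\,p_A(x)$ for $\lambda>0$ follows by the substitution $s=t/\lambda$ in the defining infimum. Subadditivity comes from convexity: if $x\in sA$ and $y\in tA$ with $s,t>0$, then writing $\frac{x+y}{s+t}=\frac{s}{s+t}\cdot\frac{x}{s}+\frac{t}{s+t}\cdot\frac{y}{t}$ exhibits $(x+y)/(s+t)$ as a convex combination of points of $A$, hence $x+y\in(s+t)A$; taking infima over $s$ and $t$ gives $p_A(x+y)\le p_A(x)+p_A(y)$. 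Finally, to upgrade positive homogeneity to absolute homogeneity I would invoke balancedness, which in a real space forces $-A=A$ and hence $p_A(-x)=p_A(x)$; combined with positive homogeneity this yields $p_A(\lambda x)=|\lambda|\,p_A(x)$ for every real $\lambda$, the case $\lambda=0$ being trivial since $0\in A$.

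For the converse, given a seminorm $p$ I would set $A:=\{x\in Y\mid p(x)\le 1\}$ and verify the three properties together with $p_A=p$. Convexity of $A$ is immediate from subadditivity and homogeneity of $p$; balancedness follows from $p(\lambda x)=|\lambda|\,p(x)\le p(x)\le 1$ whenever $|\lambda|\le 1$ and $x\in A$; and $A$ is absorbing because $p$ is real-valued, so that $x\in(p(x)+1)A$ for every $x$. The identification $p_A=p$ is the computation $\{t>0\mid x\in tA\}=\{t>0\mid p(x/t)\le 1\}=\{t>0\mid p(x)\le t\}=[p(x),\infty)$ (or $(0,\infty)$ when $p(x)=0$), whose infimum is $p(x)$ in either case.

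I expect no genuinely hard step here; the argument is routine once the definitions are unwound. The one conceptual point that requires care is the role of the balanced hypothesis: convexity and absorbency alone yield only a sublinear (positively homogeneous, subadditive) gauge, and it is precisely balancedness that supplies the modulus in $p_A(\lambda x)=|\lambda|\,p_A(x)$, thereby distinguishing a seminorm from a general sublinear functional. The other mild subtlety is the choice of the closed sublevel set $\{x\mid p(x)\le 1\}$ rather than the open one in defining $A$, which is what makes the final infimum computation give $p_A=p$ \emph{exactly}, including the boundary and the degenerate case $p(x)=0$.
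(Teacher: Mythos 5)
Your proof is correct and complete. Note that the paper itself does not prove this lemma at all: it is imported verbatim from Aliprantis--Border \cite{AliBor06} (``we need two lemmata from \cite{AliBor06}''), so there is no in-paper argument to compare against; what you have written is essentially the standard textbook proof that the citation points to. All the steps check out under the paper's standing convention that spaces are real: absorbency gives finiteness of the gauge, convexity gives subadditivity via the convex-combination identity, and balancedness (which in a real space is exactly $\lambda A\subseteq A$ for $|\lambda|\le 1$, hence $-A=A$ and $0\in A$) upgrades positive homogeneity to absolute homogeneity --- your emphasis that this is the hypothesis separating seminorms from general sublinear gauges is exactly the right conceptual point. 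One marginal quibble with your closing remark: the choice of the closed sublevel set $\{x\mid p(x)\le 1\}$ is not actually forced; the open set $\{x\mid p(x)<1\}$ is likewise balanced, convex and absorbing, and its gauge is also exactly $p$, since $\{t>0\mid p(x)<t\}=(p(x),\infty)$ has infimum $p(x)$ --- the infimum closes the gap, boundary and degenerate case included. So that step is even more robust than you suggest, though your version is of course fine.
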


\begin{lemma}\label{l-Mink-semi2}
Suppose that $Y$ is a topological vector space.
\begin{itemize}
\item[(a)] If $A$ is a closed absorbing set which is star-shaped about zero, then $A=\{y\in Y\mid p_A(y)\leq 1\}$.
\item[(b)] A real-valued  nonnegative function on $Y$
is sublinear and lower semicontinuous if and only if it is the Minkowski functional of an absorbing closed convex set.
\item[(c)] A real-valued nonnegative function on $Y$
is sublinear and continuous if and only if it is the Minkowski functional of a convex neighborhood of zero.
\end{itemize}
\end{lemma}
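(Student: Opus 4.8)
The plan is to run everything off the definition of the gauge, $p_A(y)=\inf\{t\in\mathbb{R}_{>}\mid y\in tA\}$, and to obtain (b) and (c) from (a) together with two standard facts: the gauge of a convex set is sublinear, and a sublinear functional is continuous as soon as it is bounded above on a neighbourhood of the origin. The first thing I would record is the monotonicity coming from star-shapedness about $0$: if $y\in sA$ for some $s\in\mathbb{R}_{>}$, then $y=sa$ with $a\in A$, so for every $t\geq s$ we have $y/t=(s/t)a\in A$ because $s/t\in[0,1]$; hence $\{t\in\mathbb{R}_{>}\mid y\in tA\}$ is an up-set. Consequently $p_A(y)\leq 1$ already forces $y\in tA$ for every $t>1$. (Note that an absorbing convex set contains $0$ and is therefore star-shaped about it, so this applies in (b) and (c) as well.)

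For (a) I would prove the two inclusions of $A=\{y\mid p_A(y)\leq 1\}$ separately. The inclusion $A\subseteq\{y\mid p_A(y)\leq 1\}$ is immediate since $y\in A=1\cdot A$ gives $p_A(y)\leq 1$. For the reverse inclusion, take $y$ with $p_A(y)\leq 1$; by the up-set property $y/t\in A$ for every $t>1$, and letting $t\downarrow 1$ gives $y/t\to y$, so closedness of $A$ yields $y\in A$. This single limiting step is the only place where closedness is genuinely used, and it is exactly what lets the weak inequality $p_A\leq 1$ (rather than the strict one) cut out $A$.

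For the forward direction of (b) and (c) I would first note that in both cases the gauge of a convex absorbing set containing $0$ is sublinear and real-valued: positive homogeneity is automatic, subadditivity uses convexity, and finiteness uses absorbency; so only the topological clause remains. In (b), closedness of $A$ with homogeneity and part (a) gives $\{y\mid p_A(y)\leq t\}=tA$ for $t\in\mathbb{R}_{>}$ and $\{y\mid p_A(y)\leq 0\}=\bigcap_{t>0}tA$, so every sublevel set is closed and $p_A$ is lower semicontinuous. In (c), since $A$ is a neighbourhood of $0$ and $p_A\leq 1$ on $A$, homogeneity shows $p_A\leq\varepsilon$ on the neighbourhood $\varepsilon A$, so $p_A$ is continuous at $0$, and subadditivity via $|p_A(x)-p_A(y)|\leq\max(p_A(x-y),p_A(y-x))$ propagates continuity to all of $Y$. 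For the converse directions I would set $A:=\{y\mid p(y)\leq 1\}$ and recover $p=p_A$ from the homogeneity computation $p_A(y)=\inf\{t\in\mathbb{R}_{>}\mid p(y/t)\leq 1\}=\inf\{t\in\mathbb{R}_{>}\mid p(y)\leq t\}=p(y)$; then convexity of $A$ follows from sublinearity, absorbency from finiteness of $p$, closedness of $A$ (in (b)) from lower semicontinuity, and, in (c), the fact that $A$ is a neighbourhood of $0$ from $\{y\mid p(y)<1\}$ being open, containing $0$, and lying in $A$.

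The hard part will be the reverse inclusion in (a): obtaining $A=\{y\mid p_A(y)\leq 1\}$ exactly, rather than only the sandwich $\{y\mid p_A(y)<1\}\subseteq A\subseteq\{y\mid p_A(y)\leq 1\}$. Getting this requires combining star-shapedness (so that the defining set is an up-set and $p_A(y)\leq 1$ really does give $y\in tA$ for all $t>1$) with closedness (to pass to the limit $t\downarrow 1$); everything else reduces to bookkeeping with positive homogeneity.
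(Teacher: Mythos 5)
Your proof is correct, but note that the paper itself gives no proof of this lemma at all: it is stated as one of ``two lemmata from \cite{AliBor06}'' (Aliprantis--Border) and imported wholesale, so there is no in-paper argument to compare against. What you have written is essentially the classical textbook proof, and all the delicate points are handled properly: the up-set property of $\{t\in\mathbb{R}_{>}\mid y\in tA\}$ from star-shapedness, which is exactly what upgrades $p_A(y)\le 1$ to $y\in tA$ for \emph{all} $t>1$; the limit $t\downarrow 1$ with closedness of $A$ (continuity of scalar multiplication in a TVS) to get the reverse inclusion in (a); the identification $\operatorname*{lev}_{p_A,\le}(t)=tA$ for $t>0$ and $\operatorname*{lev}_{p_A,\le}(0)=\bigcap_{t>0}tA$, which gives lower semicontinuity since dilation by $t\neq 0$ is a homeomorphism (so each $tA$ is closed); boundedness of $p_A$ by $\varepsilon$ on $\varepsilon A$ plus the subadditivity estimate $|p_A(x)-p_A(y)|\le\max\bigl(p_A(x-y),p_A(y-x)\bigr)$ for continuity in (c); and the recovery $p=p_{\{p\le 1\}}$ via positive homogeneity, where your computation $\inf\{t\in\mathbb{R}_{>}\mid p(y)\le t\}=p(y)$ correctly uses nonnegativity of $p$ to cover the case $p(y)=0$. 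Two small steps you invoke implicitly are indeed valid and worth having spelled out as you did: an absorbing set contains $0$ (apply the definition to $y=0$), so a convex absorbing set is star-shaped about zero and (a) applies in (b); and in the converse of (c), $\{y\mid p(y)<1\}$ open and containing $0$ makes $A$ a neighborhood of zero without any closedness hypothesis. In short: the proposal is a complete, self-contained proof of a result the paper only cites, and it agrees with the standard argument in the cited source.
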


\begin{proposition}\label{p-Mink-uni}
Assume $Y$ to be a topological vector space, $C\subset Y$ a non-trivial closed convex cone and $k\in -\operatorname*{core}C$.
For the Minkowski functional $p_{C+k}$, we get
\[ p_{C+k}(y)=\left\{
\begin{array}{c@{\quad\mbox{ if }\quad}l}
\varphi_{C,k}(y) & y\in Y\setminus C,  \\
0 & y\in C,
\end{array}
\right.
\]
i.e., $$p_{C+k}(y)=\operatorname*{max}\{\varphi_{C,k}(y),0\}\quad\forall y\in Y.$$
$p_{C+k}$ is sublinear and lower semicontinuous.
\end{proposition}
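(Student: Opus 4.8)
The plan is to reduce everything to the already-established properties of $\varphi_{C,k}$. First I would record that the hypotheses match Corollary \ref{c251a} with $A=C$: since $C$ is a non-trivial closed convex cone and $k\in-\operatorname*{core}C\subseteq-C\setminus\{0\}$, assumption $(H1_{C,k})$ holds, $\varphi_{C,k}$ is sublinear and lower semicontinuous, and by part (c) it is finite-valued precisely because $k\in-\operatorname*{core}C$. Thus $\varphi_{C,k}\colon Y\to\mathbb{R}$, and equation (\ref{f-r252n}) gives $\operatorname*{lev}\nolimits_{\varphi_{C,k},\le}(t)=tk+C$ for every $t\in\mathbb{R}$; in particular $\operatorname*{lev}\nolimits_{\varphi_{C,k},\le}(0)=C$, which already identifies $\{y\in Y\mid\varphi_{C,k}(y)\le 0\}$ with $C$.

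Next I would rewrite the Minkowski functional $p_{C+k}(y)=\inf\{\lambda>0\mid y\in\lambda(C+k)\}$. Since $C$ is a cone and $\lambda>0$, one has $\lambda(C+k)=\lambda C+\lambda k=C+\lambda k$, so $p_{C+k}(y)=\inf\{\lambda>0\mid y\in C+\lambda k\}$. Feeding in (\ref{f-r252n}), the condition $y\in C+\lambda k$ is exactly $\varphi_{C,k}(y)\le\lambda$, whence
\[
p_{C+k}(y)=\inf\{\lambda>0\mid\varphi_{C,k}(y)\le\lambda\}.
\]
From here the case distinction is immediate: if $\varphi_{C,k}(y)>0$ the set of admissible $\lambda$ is $[\varphi_{C,k}(y),\infty)$ and the infimum equals $\varphi_{C,k}(y)$, while if $\varphi_{C,k}(y)\le 0$ every $\lambda>0$ is admissible and the infimum is $0$. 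This yields $p_{C+k}(y)=\operatorname*{max}\{\varphi_{C,k}(y),0\}$, and combining with $\{y\mid\varphi_{C,k}(y)\le 0\}=C$ produces the displayed two-line formula, with $p_{C+k}=\varphi_{C,k}$ on $Y\setminus C$ and $p_{C+k}=0$ on $C$.

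Finally, for sublinearity and lower semicontinuity I see two routes. The cleanest is to argue directly from $p_{C+k}=\operatorname*{max}\{\varphi_{C,k},0\}$: the pointwise maximum of the sublinear function $\varphi_{C,k}$ and the zero function is positively homogeneous and subadditive (using $\operatorname*{max}\{a+b,0\}\le\operatorname*{max}\{a,0\}+\operatorname*{max}\{b,0\}$), and the maximum of two lower semicontinuous functions is lower semicontinuous. Alternatively, I would verify that $C+k$ is closed, convex and absorbing---absorbing because $-k\in\operatorname*{core}C$ forces $0\in\operatorname*{core}(C+k)$, while finiteness of $\varphi_{C,k}$ makes $p_{C+k}$ real-valued---and then invoke Lemma \ref{l-Mink-semi2}(b). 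The only point requiring care is bookkeeping at the threshold $\varphi_{C,k}(y)=0$: checking that restricting to $\lambda>0$ rather than to all $t\in\mathbb{R}$ returns exactly $\operatorname*{max}\{\cdot,0\}$ and not a spurious positive value. This is settled by the two-case computation above, so I expect no genuine obstacle.
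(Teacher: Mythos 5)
Your proposal is correct and takes essentially the same approach as the paper: rewriting $p_{C+k}(y)=\inf\{\lambda>0\mid y\in C+\lambda k\}$ via the cone identity $\lambda(C+k)=C+\lambda k$, comparing with $\varphi_{C,k}$ through the sublevel-set formula (\ref{f-r252n}) (your identification $\operatorname*{lev}_{\varphi_{C,k},\le}(0)=C$ is the same fact as the paper's observation $C\subseteq C+\lambda k$ for all $\lambda>0$), and your second route for sublinearity and lower semicontinuity --- $C+k$ closed, convex and absorbing together with Lemma \ref{l-Mink-semi2} --- is precisely the paper's final step. Your direct argument via $p_{C+k}=\operatorname*{max}\{\varphi_{C,k},0\}$ is a valid minor variant of that last step, not a different method.
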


\begin{proof}
By Proposition \ref{k-core}, $\varphi_{C,k}$ is finite-valued.\\
For each $y\in Y$, $p_{C+k}(y)=\operatorname*{inf}\{\lambda >0\mid y\in\lambda (C+k)\}=\operatorname*{inf}\{\lambda >0\mid y\in C+\lambda k\}$. Hence $p_{C+k}(y)=\varphi_{C,k}(y)$ if $\varphi_{C,k}(y)>0$. This is just the case for $y\in Y\setminus C$.\\
$C=C-\lambda k+\lambda k\subseteq C+\lambda k\;\forall\lambda >0$.
Hence $p_{C+k}(y)=0\;\forall y\in C$ and $p_{C+k}(y)=\operatorname*{max}\{\varphi_{C,k}(y),0\}\;\forall y\in Y$.
Since $C+k$ is convex, closed and absorbing, $p_{C+k}$ is sublinear and lower semicontinuous by Lemma \ref{l-Mink-semi2}.
\end{proof}

We are now going to investigate the relationship between functions with uniform level sets and norms which are defined by the Minkowski functional of an order interval.

From now on, let us assume that $Y$ is a linear space ordered by a non-trivial pointed convex cone $C \subset Y$ with nonempty core. The partial order $``\leq_C"$ is given by
\begin{eqnarray*}
\forall y^1, y^2\in Y:\quad y^1\leq_{C} y^2 \quad \mbox{if and only if}\quad y^2 - y^1\in C.
\end{eqnarray*}

Order intervals $[-k,k]_C$ have the following properties.
\begin{lemma}\label{l-ordint}
For each $k\in C$, the order interval $[-k,k]_C=\{y\in Y\mid -k \leq_C y \leq_C k\}=(C-k)\cap(k-C)$ is a convex balanced set.
It is an absorbing set if and only if $k\in\operatorname*{core}C$.
\end{lemma}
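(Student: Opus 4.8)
The plan is to unwind the two order relations that define the interval, reduce everything to the representation $[-k,k]_C=(C-k)\cap(k-C)$, and then read off convexity and balancedness directly from the cone structure of $C$, leaving the absorbing property (the only assertion that depends on the position of $k$ inside $C$) for last. First I would record the set identity: by definition $-k\leq_C y$ means $y+k\in C$, i.e.\ $y\in C-k$, while $y\leq_C k$ means $k-y\in C$, i.e.\ $y\in k-C$; intersecting gives $[-k,k]_C=(C-k)\cap(k-C)$. Convexity is then immediate, since $C-k$ and $k-C$ are a translate and a reflection-and-translate of the convex set $C$, hence convex, and the intersection of convex sets is convex.

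For balancedness I would first observe that the interval is symmetric, i.e.\ $-[-k,k]_C=[-k,k]_C$: if $y+k\in C$ and $k-y\in C$, then with $w=-y$ one has $k-w\in C$ and $w+k\in C$, so $w\in[-k,k]_C$, and conversely. Since $0\in[-k,k]_C$ (both $k-0$ and $0+k$ lie in $C$), the set is nonempty, convex and symmetric, and in a real vector space any such set is balanced: for $y\in[-k,k]_C$ and $\lambda\in[-1,1]$ one writes $\lambda y=\frac{1+\lambda}{2}\,y+\frac{1-\lambda}{2}\,(-y)\in[-k,k]_C$ by convexity, using $-y\in[-k,k]_C$ and that the two coefficients are nonnegative and sum to $1$.

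The substantive step is the absorbing criterion. Here I would exploit that, because $C$ is a cone, $\lambda[-k,k]_C=(C-\lambda k)\cap(\lambda k-C)$ for $\lambda>0$, so that $y\in\lambda[-k,k]_C$ is equivalent to the pair of conditions $\lambda k+y\in C$ and $\lambda k-y\in C$. For the implication from $k\in\operatorname*{core}C$ to the absorbing property: given $y\in Y$, applying the definition of the algebraic interior in the directions $y$ and $-y$ yields $\lambda_1,\lambda_2>0$ with $\lambda_1 k+y\in C$ and $\lambda_2 k-y\in C$; taking $\lambda=\max(\lambda_1,\lambda_2)$ and adding the nonnegative multiples $(\lambda-\lambda_i)k\in C$ gives $\lambda k\pm y\in C$, i.e.\ $y\in\lambda[-k,k]_C$. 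For the converse, if the interval is absorbing then for each $z\in Y$ there is $\lambda>0$ with $\lambda k+z\in C$, and the convex combination $k+tz=(1-t\lambda)k+t(\lambda k+z)$ lies in $C$ for every $t\in[0,1/\lambda]$, which is exactly $k\in\operatorname*{core}C$.

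The main obstacle will be keeping the absorbing equivalence clean: one must handle the two sign directions at once, since $y\in\lambda[-k,k]_C$ requires both $\lambda k+y$ and $\lambda k-y$ in $C$, and one must translate between the ``some $\lambda$ with $y\in\lambda[-k,k]_C$'' formulation of absorbing and the ``$k+tz\in C$ for small $t\ge 0$'' formulation of the algebraic interior. The two facts that make both translations go through are the cone identity $\lambda C=C$ for $\lambda>0$ and the additivity $C+C\subseteq C$ of the convex cone $C$.
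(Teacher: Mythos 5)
Your proof is correct and complete. Note that the paper itself states Lemma \ref{l-ordint} without proof (it is treated as a standard fact, with the related norm construction attributed to Jahn's Lemma 1.45 in the following proposition), so there is no in-paper argument to compare against; your write-up supplies exactly the missing details. All four steps check out: the identity $[-k,k]_C=(C-k)\cap(k-C)$ is a direct unwinding of $\leq_C$; convexity follows from intersecting translates of $C$ and $-C$; the symmetry-plus-convexity argument with $\lambda y=\frac{1+\lambda}{2}y+\frac{1-\lambda}{2}(-y)$ is a clean way to get balancedness (and handles $\lambda=0$ automatically); and the absorbing equivalence correctly uses only the two convex-cone identities you isolate, namely $\lambda C=C$ for $\lambda>0$ and $C+C\subseteq C$, together with $k\in C$ to pad with $(\lambda-\lambda_i)k$ and $(1-t\lambda)k$. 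Two small observations: your argument never uses pointedness of $C$ (the standing assumption in that section), so the lemma holds for any convex cone; and since you establish balancedness first, the two common formulations of ``absorbing'' (some $\lambda>0$ with $y\in\lambda A$, versus all sufficiently large $\lambda$) coincide, so your use of the weaker one is legitimate.
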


We can generate norms by order cones. The related seminorm and norm given in the following proposition was constructed in \cite[Lemma 1.45]{Jah86a}. The result contained in part (e) and (f) for locally convex spaces with $k\in\operatorname*{int}C$ can be found in \cite[Example 2.2.13]{KhaTamZal15}. 

\begin{proposition}\label{p-norm-ordint}
Suppose that $Y$ is a linear space, $C \subset Y$ a non-trivial convex pointed cone and $k\in\operatorname*{core}C$. Let $p$ be the Minkowski functional of the set $[-k,k]_C$. We will denote $p$ by $\|\cdot \| _{C,k}$ whenever it is a norm.
\begin{itemize}
\item[(a)] $p$ is a seminorm. 
\item[(b)] If $C$ is algebraically closed, $\|\cdot \| _{C,k}$ is a norm and $[-k,k]_C=\{y\in Y\mid \| y\| _{C,k}\leq 1\}$.
\item[(c)] If $C$ has a weakly compact base, then $\|\cdot \| _{C,k}$ is a norm and $(Y,\|\cdot \| _{C,k})$ is a reflexive normed space. Thus $(Y,\|\cdot \| _{C,k})$ is also a Banach space and an Asplund space in this case.
\item[(d)] If $Y$ is a finite-dimensional topological vector space, then $p$ is continuous.
\item[(e)] If $Y$ is a topological vector space and $C$ is closed, then $\|\cdot \| _{C,k}$ is a lower semicontinuous norm. $[-k,k]_C$ is the closed unit ball w.r.t. the norm $\|\cdot \| _{C,k}$, and $C$ has a non\-empty interior in the normed space $(Y, \| \cdot \| _{C,k})$.
\item[(f)] If the assumptions of (e) and $k\in\operatorname*{int}C$ hold, then $\|\cdot \| _{C,k}$ is continuous.
\end{itemize}
\end{proposition}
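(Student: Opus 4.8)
The plan is to rest everything on the three Minkowski-functional lemmas and to separate the purely algebraic claims (a), (b) from the topological refinements (d)--(f), handling the reflexivity assertion (c) last. For (a) I would invoke Lemma \ref{l-ordint}: since $k\in\operatorname*{core}C$ the set $[-k,k]_C$ is convex, balanced and absorbing, so $p=p_{[-k,k]_C}$ is real-valued and a seminorm by Lemma \ref{l-Mink-semi}. For (b) the only missing property is definiteness, and this is where algebraic closedness of $C$ enters. If $p(y)=0$, then star-shapedness of $[-k,k]_C$ about $0$ gives $y\in\lambda[-k,k]_C$ for every $\lambda>0$, i.e. $y+\lambda k\in C$ and $\lambda k-y\in C$ for all $\lambda>0$; running $\lambda$ down a segment to its endpoint and using algebraic closedness yields $y\in C$ and $-y\in C$, so $y=0$ by pointedness. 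The unit-ball identity $[-k,k]_C=\{y\mid\|y\|_{C,k}\le 1\}$ is proved by the same mechanism, the inclusion $\supseteq$ being immediate and $\subseteq$ following because $\|y\|_{C,k}\le 1$ forces $y+\lambda k,\ \lambda k-y\in C$ for all $\lambda>1$, whence algebraic closedness pushes $y+k$ and $k-y$ into $C$. No topology is used, in accordance with the hypotheses of (a), (b).

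For the topological parts I would first note that a norm-closed set in a topological vector space is algebraically closed, so under the hypotheses of (e) part (b) applies directly: $\|\cdot\|_{C,k}$ is a norm with closed unit ball $[-k,k]_C$. Lower semicontinuity then comes from Lemma \ref{l-Mink-semi2}(b), since $[-k,k]_C=(C-k)\cap(k-C)$ is closed, convex and absorbing. The nonempty-interior claim --- the ``side result'' advertised in the abstract --- is the one-line observation $k+[-k,k]_C=k+\big((C-k)\cap(k-C)\big)\subseteq C$, so the closed $\|\cdot\|_{C,k}$-ball about $k$ lies in $C$ and $k$ is a norm-interior point. Part (f) is the continuity sharpening: when $k\in\operatorname*{int}C$ both $C-k$ and $k-C$ are neighbourhoods of $0$, hence so is the convex set $[-k,k]_C$, and Lemma \ref{l-Mink-semi2}(c) gives continuity. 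Part (d) is immediate: on a finite-dimensional (Hausdorff) space the real-valued seminorm of (a) is automatically continuous.

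The genuinely harder statement is (c), where the strategy is to reduce reflexivity of $(Y,\|\cdot\|_{C,k})$ to weak compactness of its closed unit ball $[-k,k]_C$. Writing $[-k,k]_C=[0,2k]_C-k$ it suffices to treat the order interval $[0,2k]_C=C\cap(2k-C)$: a strictly positive functional $f$ separating $0$ from the base $B$ is bounded on $[0,2k]_C$ (via $2k-u\in C$), which confines $[0,2k]_C$ to a set $\{\,tb\mid t\in[0,M],\ b\in B\,\}$; the latter is the image of the weakly compact $[0,M]\times B$ under the jointly weakly continuous map $(t,b)\mapsto tb$, hence weakly compact, so even Krein's theorem can be avoided. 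As $C$ is norm-closed (which a weakly compact base forces) and convex, $[0,2k]_C$ is weakly closed, therefore weakly compact, and so is $[-k,k]_C$; weak compactness of the unit ball makes the space reflexive, in particular complete (Banach) and Asplund. I expect (c) to absorb essentially all the work: making precise what ``weakly compact base'' means, checking that it renders $C$ algebraically closed so that (b) even produces a norm, and controlling the order interval are the delicate points, while (a), (b), (d)--(f) are short.
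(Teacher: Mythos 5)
Your parts (a), (d), (e) and (f) coincide with the paper's own proof: Lemma \ref{l-ordint} plus Lemma \ref{l-Mink-semi} for (a), continuity of a finite sublinear functional in finite dimensions for (d), Lemma \ref{l-Mink-semi2} together with the one-line observation $k+[-k,k]_C\subseteq C$ for the interior statement in (e), and Lemma \ref{l-Mink-semi2}(c) for (f). The genuine difference lies in (b) and (c): the paper proves neither, citing Jahn \cite[Lemma 1.45]{Jah86a} for both, whereas you reconstruct them. Your (b) is the standard argument and is correct; note only that you have swapped the inclusion labels --- the immediate direction is $[-k,k]_C\subseteq\{y\in Y\mid \|y\|_{C,k}\le 1\}$ (take $\lambda=1$ in the infimum), while the reverse inclusion is the one your algebraic-closedness argument actually (and correctly) establishes, so the content is complete but the symbols $\subseteq$ and $\supseteq$ should be exchanged. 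Your (c) --- separating $0$ from the base $B$ by a functional $f$ with $\inf_B f>0$, using $f$ to confine $[0,2k]_C$ to $\{tb\mid t\in[0,M],\,b\in B\}$, obtaining weak compactness as the image of $[0,M]\times B$ under the jointly weakly continuous scalar multiplication, and then deducing reflexivity from weak compactness of the closed unit ball $[-k,k]_C=[0,2k]_C-k$ via Goldstine --- is a sound, self-contained replacement for the citation: the paper buys brevity, your route buys a proof that also makes visible why completeness and the Asplund property come for free from reflexivity.

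One point in (c) that you flag but do not resolve deserves to be settled rather than left delicate: in the bare linear space of the hypothesis, ``weakly compact'' is meaningless until a dual pair is fixed, and if one reads it as $\sigma\bigl(Y,(Y,p)^{\ast}\bigr)$-compactness with $p$ the seminorm from (a), your scheme has a circularity. The separation step producing $f$, and the uniqueness of weak net limits you use to show that a weakly compact base forces $C$ to be weakly (hence algebraically) closed, both require the weak topology to be Hausdorff, i.e.\ require $(Y,p)^{\ast}$ to separate points, i.e.\ require $p$ to already be a norm --- which is precisely what you propose to deduce from the weakly compact base via part (b). This is repaired by reading the hypothesis as in the cited literature (a Hausdorff locally convex $Y$, or a separating dual pair given in advance, with $C$ closed there), under which every step of your argument goes through; the paper sidesteps the issue entirely by delegating to Jahn. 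I recommend stating that reading explicitly at the start of your proof of (c).
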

\begin{proof}
\begin{itemize}
\item[]
\item[(a)] $A:=[-k,k]_C$ is a convex balanced absorbing set. The Minkowski functional 
$p=p_{A}:Y\rightarrow\mathbb{R}$ of $A$ is a seminorm by Lemma \ref{l-Mink-semi}. 
\item[(b)] was proved by Jahn \cite[Lemma 1.45]{Jah86a}.
\item[(c)] The first sentence was proved by Jahn \cite[Lemma 1.45]{Jah86a}, the second one immediately results from it.
\item[(d)] follows from the sublinearity of $p$.
\item[(e)] The norm property and the unit ball result from (b). Since
 $A$ is the closed unit ball w.r.t. the norm $p_A$ and $k+A\subseteq C$,  
$k$ is an interior point of $C$ w.r.t. the topology generated by the norm $p_A$. $p_{A}$ is lower semicontinuous by Lemma \ref{l-Mink-semi2}
\item[(f)] Since $A$ is a convex neighborhood of zero, $p_{A}$ is continuous by Lemma \ref{l-Mink-semi2}.
\end{itemize}
\end{proof}

\begin{remark}
Let $Y$ be a Riesz space with the ordering cone $C$. Then we can show that $k\in Y$ is an order unit of $Y$ if and only if
$k\in\operatorname*{core}C$. The usual order unit norm which is often denoted by $|| \cdot ||_{\infty}$ is just $|| \cdot ||_{C,k}$. For details concerning Riesz spaces, see e.g. \cite{AliBor06}.
\end{remark}

Let us note that Proposition \ref{p-norm-ordint} yields the following statement.

\begin{corollary}
If $Y$ is a topological vector space and $C \subset Y$ a non-trivial closed convex pointed cone with nonempty core,
then this core is the interior of $C$ in some norm topology of $Y$.
\end{corollary}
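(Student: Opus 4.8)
The plan is to produce the required norm topology explicitly from Proposition \ref{p-norm-ordint} and then identify the interior of $C$ in that topology with its algebraic core by a standard convexity argument. Since $\operatorname*{core}C\neq\emptyset$ by hypothesis, I would first fix some $k\in\operatorname*{core}C$. Because $Y$ is a topological vector space and $C$ is a non-trivial closed convex pointed cone, Proposition \ref{p-norm-ordint}(e) applies with this $k$: the Minkowski functional $\|\cdot\|_{C,k}$ of the order interval $[-k,k]_C$ is a (lower semicontinuous) norm on $Y$, its closed unit ball is $[-k,k]_C$, and $C$ has nonempty interior in the normed space $(Y,\|\cdot\|_{C,k})$. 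The candidate norm topology is therefore the one generated by $\|\cdot\|_{C,k}$.

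Writing $\operatorname*{int}_{C,k}$ for the interior operator in the topology of $\|\cdot\|_{C,k}$, I would then establish the two inclusions. The inclusion $\operatorname*{int}_{C,k}C\subseteq\operatorname*{core}C$ is immediate and holds in every vector topology, since a topological interior point is always an algebraic interior point; moreover the algebraic core is computed from the linear structure alone and hence does not depend on which topology $Y$ carries. For the reverse inclusion I would use that $C$ is convex and, by Proposition \ref{p-norm-ordint}(e), has nonempty interior in $(Y,\|\cdot\|_{C,k})$; for a convex set with nonempty interior in a topological vector space one has $\operatorname*{core}=\operatorname*{int}$ (the same result cited in the proof of Proposition \ref{Fact4}(e), see \cite{Hol75}, \cite{BorVan10}). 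Applying it inside $(Y,\|\cdot\|_{C,k})$ yields $\operatorname*{core}C=\operatorname*{int}_{C,k}C$, which is exactly the assertion.

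The proof is essentially a bookkeeping assembly of results already in hand, so there is no genuinely hard step. The one point that must be handled with care is that the ``nonempty interior'' needed to trigger the convex core-equals-interior theorem has to be taken in the new norm topology rather than in the original topology of $Y$: this is precisely what Proposition \ref{p-norm-ordint}(e) supplies, via the observation that $k+[-k,k]_C=[0,2k]_C\subseteq C$, so that the center $k$ of the unit ball lies in $\operatorname*{int}_{C,k}C$. Once this is in place, combining it with the topology-independence of $\operatorname*{core}$ closes the argument.
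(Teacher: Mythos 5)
Your proof is correct and is precisely the derivation the paper intends: the paper states the corollary as an immediate consequence of Proposition \ref{p-norm-ordint}, whose part (e) already records that $k+[-k,k]_C\subseteq C$ makes $k$ a norm-interior point of $C$ in $(Y,\|\cdot\|_{C,k})$. Your completion of the argument --- invoking $\operatorname*{core}=\operatorname*{int}$ for convex sets with nonempty interior (as in the proof of Proposition \ref{Fact4}(e)) together with the topology-independence of the algebraic core --- fills in the routine steps exactly as the paper's cross-reference implies, so there is nothing further to add.
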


\begin{proposition}\label{p-ordint-varphi}
Suppose that $Y$ is a topological vector space, $C \subset Y$ a non-trivial closed convex pointed cone with $k\in\operatorname*{core}C$, $a\in Y$. Denote by $||\cdot ||_{C,k}$ the norm which is given as the Minkowski functional of the order interval $[-k,k]_C$. Then
\[ \label{eq-norm-varphi}
||y-a||_{C,k}=\varphi_{a-C,k}(y)\quad\forall y\in a+C.
\]
\end{proposition}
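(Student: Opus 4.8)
The plan is to reduce the statement to a computation on the cone $C$ and then recognize both sides as Minkowski-type infima that happen to coincide. First I would set $z := y - a$ and invoke Proposition \ref{A-shift} with $y^0 = a$ and $A = -C$ (its hypothesis $(H1_{-C,k})$ holds by Corollary \ref{c251a}), which yields $\varphi_{a-C,k}(y) = \varphi_{-C,k}(y-a) = \varphi_{-C,k}(z)$. Since $y \in a + C$ is equivalent to $z \in C$, it suffices to prove
\[ \|z\|_{C,k} = \varphi_{-C,k}(z) \qquad \forall\, z \in C. \]
By Proposition \ref{p-norm-ordint}(e) the functional $\|\cdot\|_{C,k}$ is a genuine norm with closed unit ball $[-k,k]_C$, and by Corollary \ref{c251a}(c) the functional $\varphi_{-C,k}$ is finite-valued because $k \in \operatorname*{core} C$; hence both sides are real numbers.

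Next I would write both quantities as infima over scalars. Using $[-k,k]_C = (C-k) \cap (k-C)$ from Lemma \ref{l-ordint} together with $\lambda C = C$ for $\lambda > 0$, the Minkowski functional becomes
\[ \|z\|_{C,k} = \operatorname*{inf}\{\lambda > 0 \mid z \in \lambda[-k,k]_C\} = \operatorname*{inf}\{\lambda > 0 \mid -\lambda k \leq_C z \leq_C \lambda k\}, \]
while directly from Definition \ref{d-funcak0} one has $\varphi_{-C,k}(z) = \operatorname*{inf}\{t \in \mathbb{R} \mid z \in tk - C\} = \operatorname*{inf}\{t \in \mathbb{R} \mid z \leq_C tk\}$.

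The heart of the argument is to show these two infima agree for $z \in C$, and the main obstacle is purely a reconciliation of the ranges of the infima rather than any analytic estimate. I would first observe that the lower constraint $-\lambda k \leq_C z$, i.e.\ $z + \lambda k \in C$, is automatic for every $z \in C$ and every $\lambda \geq 0$ because $C + C \subseteq C$; hence $\|z\|_{C,k} = \operatorname*{inf}\{\lambda > 0 \mid z \leq_C \lambda k\}$, so the two expressions differ only in that one ranges over $\lambda > 0$ and the other over all $t \in \mathbb{R}$. The remaining point, and the only place where pointedness of $C$ is used, is that for $z \in C \setminus \{0\}$ no $t \leq 0$ can satisfy $z \leq_C tk$: if one did, then $tk \leq_C 0$ (since $-tk \in C$ for $t \leq 0$) would force $z \leq_C 0$ by transitivity, giving $z \in C \cap (-C) = \{0\}$, a contradiction. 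Thus the constraint set $\{t \in \mathbb{R} \mid z \leq_C tk\}$ is contained in $(0,\infty)$, restricting to positive scalars leaves its infimum unchanged, and the two values coincide; the case $z = 0$ is immediate since both sides vanish.
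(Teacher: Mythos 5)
Your proof is correct and takes essentially the same route as the paper's: both expand the Minkowski functional of $[-k,k]_C=(C-k)\cap(k-C)$ as an infimum over $\lambda>0$, drop the lower constraint $-\lambda k\leq_C z$ as automatic for $z\in C$, reconcile the range $\lambda>0$ with $t\in\mathbb{R}$ via pointedness (the paper through $(a+C)\cap(a-C)=\{a\}$ and the fact that $\varphi_{a-C,k}>0$ off $a-C$, you directly on the constraint set), and treat the apex case $y=a$, i.e.\ $z=0$, separately. One cosmetic remark: pointedness is also tacitly needed at $z=0$ to see that $\varphi_{-C,k}(0)=0$ rather than negative or $-\infty$, so it is not confined to the $z\neq 0$ step as you assert.
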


\begin{proof}
Consider some $y\in a+C$.
\begin{eqnarray*}
||y-a||_{C,k} & = & \operatorname*{inf}\{\lambda >0\mid y-a\in \lambda ((C-k)\cap (k-C))\}\\
& = & \operatorname*{inf}\{\lambda >0\mid y-a\in  (C-\lambda k)\cap (\lambda k-C)\}\\
& = & \operatorname*{inf}\{\lambda >0\mid y-a\in  \lambda k-C\} \mbox{ since } y-a\in C\subseteq C-\lambda k\;\forall\lambda\in\mathbb{R}_+.\\
& = & \varphi_{a-C,k}(y) \mbox{ if } y\notin a-C.
\end{eqnarray*}
$(a+C)\cap (a-C)=\{a\}$ since $C$ is pointed. Hence $||y-a||_{C,k}=\varphi_{a-C,k}(y)\quad\forall y\in a+C$ with $y\not=a$. $||a-a||_{C,k}=0=\varphi_{a-C,k}(a)$. 
\end{proof}

In many applications, solutions are determined by problems $\operatorname*{min}_{y\in F}\| y-a\| _{C,k}$ with $F\subseteq a+C$. Replacing $\| y-a\| _{C,k}$ by $\varphi_{a-C,k}(y)$, this approach can often be applied without the assumption $F\subseteq a+C$. This is illustrated for the scalarization of vector optimization problems with the weighted Chebyshev norm and with extensions of this norm in \cite{Wei90} and \cite{Wei94}.
\smallskip

\section{Bibliographical Notes}\label{sec-phiAk_biogr}

Because of the strong connection to partial orders pointed out at the beginning of Section \ref{sec-Monot}, functions of type $\varphi _{A,k}$ have been used in proofs in different fields of mathematics for the construction of sublinear functionals. In these cases, $A$ is a closed pointed convex cone, usually the ordering cone of the space considered, and $k\in -A$. Among the earliest references listed in \cite{Ham12} are \cite{Bon54} and \cite{Kra64}, where the functional was applied in operator theory. $\varphi _{A,k}$ has also been studied in economic theory and finance, e.g. as so-called shortage function by Luenberger \cite{luen92} and for risk measures by Artzner et al. \cite{art99}. 

Tammer (formerly Gerstewitz and Gerth) was the first one who introduced a functional of type $\varphi _{A,k}$ by formula (\ref{funcak0}) in vector optimization theory, for the definition of a set of properly efficient points and the investigation of dual problems \cite{ger85}. Here, $A$ is supposed to be a closed convex set with $Y=\mathbb{R}k+A$,
$K$ is the ordering cone of the space $Y$, $k\in K\setminus\{0\}$. Then strict $K$-monotonicity is stated under the assumption $A-(K\setminus\{0\}))\subset \operatorname*{int}A$ in a Hausdorff topological vector space and $K$-monotonicity under the assumption $A-K\subseteq A$ in a barrelled locally convex space. Moreover, in both cases convexity, continuity and condition (\ref{f-r252n}) were investigated.
 
Z{\u{a}}linescu \cite{Zal:87a} considered $\varphi _{A,k}$ as a function which maps into $\overline{\mathbb{R}}$ and not necessarily into $\mathbb{R}$, under the assumption that $A$ is a proper closed convex subset of a Hausdorff topological vector space with $0\in\operatorname*{bd}A$ and $k\in (-0^+A)\cap (-\operatorname*{int}A)$. 

In \cite{GerWei90}, C. Tammer and the author investigated functionals 
$\varphi _{A,k}$ under the following alternative assumptions in an arbitrary topological vector space $Y$:
\begin{itemize}
\item[(a)] $A$ is the closure of some proper open convex subset of $Y$, $k\in -0^+A$ and $Y=A+\mathbb{R}k$;
\item[(b)] $A$ is the closure of some proper subset $C$ of $Y$ for which there exists a cone $K\subset Y$ with $\operatorname*{int}K\not= \emptyset$ and $A-\operatorname*{int}K\subseteq \operatorname*{int}C$, $k\in\operatorname*{int}K$. 
\end{itemize}
Under assumption (a) as well as under assumption (b), they proved that $\varphi _{A,k}$ is continuous and finite-valued, that the statements (\ref{f-r252n-wdhl}), (\ref{f-r254-1nn}) and (\ref{f-r254-2nn}) hold,
and gave sufficient conditions for monotonicity, strict monotonicity and subadditivity of the functional which referred to the boundary of $A$. 
They proved strict $(\operatorname*{int}K)$-monotonicity  of $\varphi _{A,k}$ under assumption (b), convexity of $\varphi _{A,k}$ under assumption (a) and showed that $\varphi _{A,k}$ is also convex under assumption (b) if $A$ is convex. The case where $\varphi _{A,k}$ becomes a sublinear function, i.e., where $C=K$ is a cone in (b), was also investigated. Tammer and Weidner formulated separation theorems based on these results. 

In \cite{Wei90a} and \cite{Wei90}, the author compared properties of functionals on topological vector spaces which are defined by one of the conditions (\ref{f-r252n}), (\ref{f-r254-1nn}), (\ref{f-r254-2nn}), by condition (\ref{f-r254-2nn}) with $A$ instead of $\operatorname*{bd}A$, in the form $\varphi _{A,k}$, or in the form $\varphi _{\operatorname*{cl}A,k}$ under the most general assumptions. She investigated under which conditions these definitions result in well-defined functionals as well as the properties of these functionals. She proved that a function which is defined by (\ref{f-r252n}) for a closed set $A$ and $k\in Y\setminus\{0\}$ is proper if and only if (\ref{r255nx-inA}) and $(H1_{A,k})$ hold
\cite[Satz 3.1.10]{Wei90}. She showed that these conditions are fulfilled for each closed pointed convex cone $A$ with $k\in -A\setminus\{0\}$.
Weid\-ner proved the equivalence of Theorem \ref{t251M} (b) for any function which fulfills (\ref{f-r252n}).
Moreover, she studied lower semicontinuity, monotonicity, convexity and quasiconvexity of such a functional. The author \cite[Satz 3.2.1]{Wei90} proved that the functionals
given by one of the conditions (\ref{f-r252n}), (\ref{f-r254-1nn}) or (\ref{f-r254-2nn}) and $\varphi _{A,k}$ coincide for a closed set $A$ on $A+\mathbb{R}k$ and are well-defined if and only if $\operatorname*{int}A=\operatorname*{bd}A-\mathbb{R}_{>}k$ holds. She stu\-died sets with this property and showed that (\ref{rem251}) holds for these sets.
Under the supposition that
\begin{displaymath}
\operatorname*{int}A=\operatorname*{bd}A-\mathbb{R}_{>}k \mbox{ and } Y=\operatorname*{bd}A+\mathbb{R}k, 
\end{displaymath} 
she proved that $\varphi _{A,k}$ is finite-valued and continuous and gave necessary and sufficient conditions for the following properties: monotonicity, strict monotonicity, convexity, quasiconvexity, strict quasiconvexity, subadditivity, superadditivity, concavity and its modifications, positive-homogeneity, sublinearity, oddness, homogeneity and linearity. It was shown that this supposition is fulfilled if assumption (a) from \cite{GerWei90} mentioned above or $(H3_{A,k})$ is satisfied, and that $(H3_{A,k})$ is equivalent to the conditions stated in Proposition \ref{H3-alternat}, which also includes the above assumption (b) for closed sets $C$.

In \cite{Wei90}, the results for $\varphi _{A,k}$ are also applied to vector optimization. Conditions for efficiency, weak efficiency and proper efficiency are derived, especially for non-convex vector optimization problems. The author \cite{Wei90} was the first one who pointed out that the scalarization by Pascoletti and Serafini \cite{PasSer84} is equivalent to the problem  
\begin{displaymath}
\operatorname*{min} \{\varphi _{a-C,k}(y)\mid y\in F\cap\operatorname*{dom}\varphi _{a-C,k}\},
\end{displaymath} 
where $F$ is the feasible point set in the finite-dimensional linear image space of a multicriteria optimization problem and $C$ is a closed convex cone in this space.
Moreover, she extended this approach to arbitrary sets $C$ in not necessarily finite-dimensional linear spaces and studied its properties. In contrary to Pascoletti and Serafini, she applied the scalar optimization problem not only for finding solutions to vector optimization problems the optima of which are defined w.r.t. a domination cone $D=C$, but admitted sets $D$ which are not cones and sets $C\not= D$.
She showed that many known scalarizations in multicriteria optimization turn out to be special cases of this general optimization problem and thus can be formulated using functionals $\varphi _{A,k}$. In this way, properties of $\varphi _{A,k}$ implied statements for different scala\-rizations which were afterwards used by 
Eichfelder \cite{eich08} for developing adaptive scala\-rization methods. 

In \cite[Theorem 2.3.1]{GopRiaTamZal:03}, $\varphi _{A,k}$ is considered as an extended-real-valued functional which is not necessarily finite-valued. Under assumption $(H1_{A,k})$, lower semicontinuity, (\ref{f-r252n}), (\ref{f-r255nn}) and necessary and sufficient conditions for $\varphi _{A,k}$ to be convex, subadditive, positive-homogeneous, proper and finite-valued, respectively, were proved. Under assumption $(H2_{A,k})$, continuity of $\varphi _{A,k}$, (\ref{f-r254-1nn}), (\ref{f-r254-2nn}) and (\ref{rem251}) were shown. \cite{GopRiaTamZal:03} also contains Lemma \ref{bdA-equ-A} (a)-(c) assuming the stronger condition $(H2_{A,k})$ and some additional assumptions.  In \cite{TamZal10}, (\ref{int_in_less}) and (\ref{r0}) are also mentioned and examples illustrate that the inclusion in (\ref{r0}) is, in general, strict \cite[Example 2.1]{TamZal10}, and that $(H2_{A,k})$ does not imply properness of $\varphi _{A,k}$ \cite[Example 3.2]{TamZal10}. Moreover, part (a) of Theorem \ref{t251M} and part (c) of Theorem \ref{t251M} for $\operatorname*{int}$ instead of $\operatorname*{core}$ were shown \cite[Theorem 3.1]{TamZal10}. They showed Proposition \ref{Fact4}(g) under the additional assumption $\operatorname*{int}A\not=\emptyset$.

Those results of this report for which no reference is given in this section or in the previous sections are original results by the author.

Let us finally note that there exist hundreds of articles which apply the functions discussed in this paper, mainly based on \cite{GerWei90}. Especially the comprehensive contribution of Christiane Tammer to this field should be mentioned. In this section we only discussed references which are immediately related to the results of this paper and point out the basic connection to scalarization in vector optimization. 

\smallskip

\def\cfac#1{\ifmmode\setbox7\hbox{$\accent"5E#1$}\else
  \setbox7\hbox{\accent"5E#1}\penalty 10000\relax\fi\raise 1\ht7
  \hbox{\lower1.15ex\hbox to 1\wd7{\hss\accent"13\hss}}\penalty 10000
  \hskip-1\wd7\penalty 10000\box7}
  \def\cfac#1{\ifmmode\setbox7\hbox{$\accent"5E#1$}\else
  \setbox7\hbox{\accent"5E#1}\penalty 10000\relax\fi\raise 1\ht7
  \hbox{\lower1.15ex\hbox to 1\wd7{\hss\accent"13\hss}}\penalty 10000
  \hskip-1\wd7\penalty 10000\box7}
  \def\cfac#1{\ifmmode\setbox7\hbox{$\accent"5E#1$}\else
  \setbox7\hbox{\accent"5E#1}\penalty 10000\relax\fi\raise 1\ht7
  \hbox{\lower1.15ex\hbox to 1\wd7{\hss\accent"13\hss}}\penalty 10000
  \hskip-1\wd7\penalty 10000\box7}
  \def\cfac#1{\ifmmode\setbox7\hbox{$\accent"5E#1$}\else
  \setbox7\hbox{\accent"5E#1}\penalty 10000\relax\fi\raise 1\ht7
  \hbox{\lower1.15ex\hbox to 1\wd7{\hss\accent"13\hss}}\penalty 10000
  \hskip-1\wd7\penalty 10000\box7}
  \def\cfac#1{\ifmmode\setbox7\hbox{$\accent"5E#1$}\else
  \setbox7\hbox{\accent"5E#1}\penalty 10000\relax\fi\raise 1\ht7
  \hbox{\lower1.15ex\hbox to 1\wd7{\hss\accent"13\hss}}\penalty 10000
  \hskip-1\wd7\penalty 10000\box7}
  \def\cfac#1{\ifmmode\setbox7\hbox{$\accent"5E#1$}\else
  \setbox7\hbox{\accent"5E#1}\penalty 10000\relax\fi\raise 1\ht7
  \hbox{\lower1.15ex\hbox to 1\wd7{\hss\accent"13\hss}}\penalty 10000
  \hskip-1\wd7\penalty 10000\box7}
  \def\cfac#1{\ifmmode\setbox7\hbox{$\accent"5E#1$}\else
  \setbox7\hbox{\accent"5E#1}\penalty 10000\relax\fi\raise 1\ht7
  \hbox{\lower1.15ex\hbox to 1\wd7{\hss\accent"13\hss}}\penalty 10000
  \hskip-1\wd7\penalty 10000\box7}
  \def\cfac#1{\ifmmode\setbox7\hbox{$\accent"5E#1$}\else
  \setbox7\hbox{\accent"5E#1}\penalty 10000\relax\fi\raise 1\ht7
  \hbox{\lower1.15ex\hbox to 1\wd7{\hss\accent"13\hss}}\penalty 10000
  \hskip-1\wd7\penalty 10000\box7}
  \def\cfac#1{\ifmmode\setbox7\hbox{$\accent"5E#1$}\else
  \setbox7\hbox{\accent"5E#1}\penalty 10000\relax\fi\raise 1\ht7
  \hbox{\lower1.15ex\hbox to 1\wd7{\hss\accent"13\hss}}\penalty 10000
  \hskip-1\wd7\penalty 10000\box7} \def\Dbar{\leavevmode\lower.6ex\hbox to
  0pt{\hskip-.23ex \accent"16\hss}D}
  \def\cfac#1{\ifmmode\setbox7\hbox{$\accent"5E#1$}\else
  \setbox7\hbox{\accent"5E#1}\penalty 10000\relax\fi\raise 1\ht7
  \hbox{\lower1.15ex\hbox to 1\wd7{\hss\accent"13\hss}}\penalty 10000
  \hskip-1\wd7\penalty 10000\box7} \def\cprime{$'$}
  \def\Dbar{\leavevmode\lower.6ex\hbox to 0pt{\hskip-.23ex \accent"16\hss}D}
  \def\cfac#1{\ifmmode\setbox7\hbox{$\accent"5E#1$}\else
  \setbox7\hbox{\accent"5E#1}\penalty 10000\relax\fi\raise 1\ht7
  \hbox{\lower1.15ex\hbox to 1\wd7{\hss\accent"13\hss}}\penalty 10000
  \hskip-1\wd7\penalty 10000\box7} \def\cprime{$'$}
  \def\Dbar{\leavevmode\lower.6ex\hbox to 0pt{\hskip-.23ex \accent"16\hss}D}
  \def\cfac#1{\ifmmode\setbox7\hbox{$\accent"5E#1$}\else
  \setbox7\hbox{\accent"5E#1}\penalty 10000\relax\fi\raise 1\ht7
  \hbox{\lower1.15ex\hbox to 1\wd7{\hss\accent"13\hss}}\penalty 10000
  \hskip-1\wd7\penalty 10000\box7}
  \def\udot#1{\ifmmode\oalign{$#1$\crcr\hidewidth.\hidewidth
  }\else\oalign{#1\crcr\hidewidth.\hidewidth}\fi}
  \def\cfac#1{\ifmmode\setbox7\hbox{$\accent"5E#1$}\else
  \setbox7\hbox{\accent"5E#1}\penalty 10000\relax\fi\raise 1\ht7
  \hbox{\lower1.15ex\hbox to 1\wd7{\hss\accent"13\hss}}\penalty 10000
  \hskip-1\wd7\penalty 10000\box7} \def\Dbar{\leavevmode\lower.6ex\hbox to
  0pt{\hskip-.23ex \accent"16\hss}D}
  \def\cfac#1{\ifmmode\setbox7\hbox{$\accent"5E#1$}\else
  \setbox7\hbox{\accent"5E#1}\penalty 10000\relax\fi\raise 1\ht7
  \hbox{\lower1.15ex\hbox to 1\wd7{\hss\accent"13\hss}}\penalty 10000
  \hskip-1\wd7\penalty 10000\box7} \def\Dbar{\leavevmode\lower.6ex\hbox to
  0pt{\hskip-.23ex \accent"16\hss}D}
  \def\cfac#1{\ifmmode\setbox7\hbox{$\accent"5E#1$}\else
  \setbox7\hbox{\accent"5E#1}\penalty 10000\relax\fi\raise 1\ht7
  \hbox{\lower1.15ex\hbox to 1\wd7{\hss\accent"13\hss}}\penalty 10000
  \hskip-1\wd7\penalty 10000\box7} \def\Dbar{\leavevmode\lower.6ex\hbox to
  0pt{\hskip-.23ex \accent"16\hss}D}
  \def\cfac#1{\ifmmode\setbox7\hbox{$\accent"5E#1$}\else
  \setbox7\hbox{\accent"5E#1}\penalty 10000\relax\fi\raise 1\ht7
  \hbox{\lower1.15ex\hbox to 1\wd7{\hss\accent"13\hss}}\penalty 10000
  \hskip-1\wd7\penalty 10000\box7}

\end{document}